\numberwithin{equation}{section}
\numberwithin{figure}{section}
\numberwithin{table}{section}
\numberwithin{algorithm}{section}
\newcommand\disp{\displaystyle}
\newcommand{\dx}{\,\mathrm{d}\bx}
\newtheorem{theorem}{Theorem}[section]
\newtheorem{lemma}[theorem]{Lemma}
\newenvironment{proof}{\noindent{\it Proof.}}{\hfill$\square$}
\newcommand{\bphi}{{\boldsymbol\phi}}
\newcommand{\bPi}{{\mbox{\boldmath $\Pi$}}}
\newcommand{\btau}{{\boldsymbol\tau}}
\newcommand{\bchi}{{\boldsymbol\chi}}
\newcommand{\bdelta}{{\boldsymbol\delta}}
\newcommand{\bxi}{{\boldsymbol\xi}}
\newcommand{\bomega}{{\boldsymbol\omega}}
\newcommand{\btheta}{{\boldsymbol\theta}}
\newcommand{\bzeta}{{\boldsymbol\zeta}}
\newcommand{\bDelta}{{\boldsymbol\Delta}}
\newcommand{\bnabla}{{\boldsymbol\nabla}}
\newcommand{\bv}{{\boldsymbol{v}}}
\newcommand{\bb}{{\boldsymbol{b}}}
\newcommand{\bw}{{\boldsymbol{w}}}
\newcommand{\f}{\boldsymbol{f}}
\newcommand{\g}{\boldsymbol{g}}
\newcommand{\ba}{\boldsymbol{a}}
\newcommand{\bq}{\boldsymbol{q}}
\newcommand{\bc}{\mathbf{c}}
\newcommand{\bu}{{\boldsymbol{u}}}
\newcommand{\bx}{\boldsymbol{x}}
\newcommand{\bB}{\mathbf{B}}
\newcommand{\cA}{\mathcal{A}}
\newcommand{\cB}{\mathcal{B}}
\newcommand{\cJ}{\mathcal{J}}
\def\bphi{\boldsymbol{\phi}}
\def\rS{\mathrm{S}}
\def\H{\mathrm{H}}
\newcommand{\bz}{{\mathbf{z}}}
\newcommand{\bn}{{\mathbf{n}}}
\newcommand{\be}{{\mathbf{e}}}
\def\rp{\mathsf{p}}
\def\rF{\mathsf{F}}
\newcommand{\bzero}{\boldsymbol{0}}
\newcommand{\Om}{\Omega}
\newcommand{\0}{{\boldsymbol{0}}}
\def\bD{\mathbf{D}}
\def\bF{\mathbf{F}}
\def\bK{\mathbf{K}}
\def\bV{\mathbf{V}}
\def\bW{\mathbf{W}}
\def\bS{\mathbf{S}}
\newcommand{\bL}{\mathbf{L}}
\newcommand\bH{\mathbf{H}}
\newcommand{\bbR}{\mathbb{R}}
\newcommand{\cE}{\mathcal{E}}
\newcommand{\cT}{\mathcal{T}}
\def\wh{\widehat}
\def\wt{\widetilde}
\newcommand{\bcurl}{\mathbf{curl}\,}
\newcommand{\rH}{\mathrm{H}}
\newcommand{\rL}{\mathrm{L}}
\newcommand{\rQ}{\mathrm{Q}}
\def\rP{\mathrm{P}}
\newcommand{\rW}{\mathrm{W}\,}
\renewcommand\div{\mathop{\mathrm{div}\,}\nolimits}
\renewcommand{\div}{\mathrm{div}}
\def\qin{{\quad\hbox{in}\quad}}
\def\qon{{\quad\hbox{on}\quad}}
\def\qan{{\quad\hbox{and}\quad}}
\DeclareMathAlphabet\mathbfcal{OMS}{cmsy}{b}{n}
\renewcommand\div{{\mathrm{div}}}
\newcommand\crF{{\mathscr{F}}}
\newcommand\cI{{\mathcal{I}}}
\newcommand\RTinterp{{\cI^{\mathsf{RT}}}}
\newcommand\CRinterp{{\cI^{\mathsf{CR}}}}
\newcommand{\trinl}{\ensuremath{|\!|\!|}}
\newcommand{\trinr}{\ensuremath{|\!|\!|}}
\newcommand\pwnorm[1]{\trinl#1\trinr_{\mathrm{pw}}}
\definecolor{mygray}{rgb}{0.7,0.7,0.7}
\begin{document}
\let\WriteBookmarks\relax
\def\floatpagepagefraction{1}
\def\textpagefraction{.001}
\shorttitle{Velocity-vorticity-pressure formulation for Navier--Stokes--Brinkman--Forchheimer}
\shortauthors{Badia, Carstensen, Mart\'in, Ruiz-Baier, Villa-Fuentes}
\title[mode=title]{{A}
%Non-augmented 
velocity-vorticity-pressure formulation for the {steady} Navier--Stokes--Brinkman--Forchheimer problem}

\author[1]{Santiago Badia}[orcid=0000-0003-2391-4086]\ead{santiago.badia@monash.edu}
\author[2]{Carsten Carstensen}[orcid=0009-0008-6336-7332]\ead{cc@math.hu-berlin.de}
\author[3]{Alberto F. Mart\'in}[orcid=0000-0001-5751-4561]\ead{alberto.f.martin@anu.edu.au}
\author[1,4,5]{Ricardo Ruiz-Baier}[orcid=0000-0003-3144-5822]\ead{ricardo.ruizbaier@monash.edu}\cormark[1]
\author[1,6]{Segundo Villa-Fuentes}[orcid=0000-0002-0377-6555]\ead{svilla@ubiobio.cl}

\affiliation[1]{organization={School of Mathematics, Monash University},
    addressline={9 Rainforest Walk},
    postcode={3800}, 
    city={Melbourne},
    state={Victoria},
    country={Australia}}

\affiliation[2]{organization={Department of Mathematics, Humboldt-Universit\"at zu Berlin},
   % addressline={Casilla 7-D},
    postcode={10099}, 
    city={Berlin},
    country={Germany}}    

\affiliation[3]{organization={School of Computing, Australian National University},
    addressline={9 Rainforest Walk},
    postcode={2601}, 
    city={Acton},
    state={ACT},
    country={Australia}}
    
\affiliation[4]{organization={Universidad Adventista de Chile},
    addressline={Casilla 7-D},
    %postcode={3800}, 
    city={Chill\'an},
    country={Chile}}    
    
\affiliation[5]{organization={Institute for Computer Science and Mathematical Modelling, Sechenov First Moscow State Medical University},
    %addressline={9 Rainforest Walk},
    %postcode={2601}, 
    city={Moscow},
    %state={ACT},
    country={Russia}}    

\affiliation[6]{organization={Departamento de Matem\'atica, Universidad del B\'io-B\'io},
    addressline={Casilla 5-C},
    %postcode={3800}, 
    city={Concepci\'on},
    country={Chile}}    

\cortext[cor1]{Corresponding author.}

\begin{abstract}
The flow of incompressible fluid in highly permeable porous media  in vorticity - velocity - Bernoulli pressure form leads to 
 a double saddle-point problem in the  Navier--Stokes--Brinkman--Forchheimer equations. The paper establishes, {for small sources,} the existence of solutions on the continuous and discrete level of lowest-order piecewise divergence-free Crouzeix--Raviart finite elements. The vorticity employs a vector version of the pressure space with normal and tangential velocity jump penalisation terms. A simple Raviart--Thomas interpolant leads to pressure-robust  a priori error estimates. An explicit residual-based a posteriori error estimate allows for  efficient and reliable a posteriori error control. The efficiency for the Forchheimer nonlinearity requires a novel discrete inequality of independent interest. The implementation is based upon a light-weight forest-of-trees data structure handled by a highly parallel set of adaptive {mesh refining} algorithms. Numerical simulations reveal  robustness of the a posteriori error estimates and improved convergence rates by  adaptive mesh-refining. 
\end{abstract}

\begin{keywords} Navier--Stokes--Brinkman--Forchheimer equations \sep Pressure robustness \sep  Nonconforming finite elements \sep  Banach fixed-point theory \sep A priori and a posteriori error estimates 
\MSC[2020] 65N15 \sep 65N30 \sep 76D05 \sep 76M10\end{keywords}

%%%%%%%%%%%%%%%%%%%%%%%%%%%%%%%%%%%%%%%%%%%%%%%%%%%%%%%%%%%%%%%%%%%%%%%%%%%%%%%%%

\maketitle

\section{Introduction}

\paragraph{Scope.} Equations of incompressible flow in dimensionless rotational or Lamb form (using vorticity) are of high importance in a number of applications. See, for example, the following non-exhaustive list of recent contributions analysing numerical methods based on different formulations \cite{amara07,benzi12,caraballo24,hanot23,layton09,salaun15,tsai05,zhang24} (and see also the references therein). {It} was observed in \cite{anaya21,anaya23a,anaya23b} that, in order to control the full $\mathbf{H}^1(\Omega)$ norm of the velocity and maintain optimal convergence, vorticity-based formulations for incompressible flow (with vorticity sought in $\mathbf{L}^2(\Omega)$ instead of the more common $\mathbf{H}(\bcurl,\Omega)$ case, velocity in $\mathbf{H}(\bcurl,\Omega)\cap\mathbf{H}(\div,\Omega)$, and Bernoulli pressure in $\rL^2(\Omega)$) required augmentation least-squares terms coming from the incompressibility condition and constitutive equation for vorticity (the latter resembling also the vorticity-stabilisation from, e.g., \cite{ahmed21,beirao21}).  The aforementioned works \cite{anaya21,anaya23a,anaya23b}  (which tackle Oseen, Navier--Stokes, and Forchheimer equations) also contain numerical evidence that either the vorticity or divergence stabilisation parameters could be zero for some finite element pairs approximating velocity and Bernoulli pressure. 

{The general Navier--Stokes and Forchheimer nonlinearities as well as the Brinkman drag will be included in this paper using fixed-point arguments. In contrast to the works above,} we do not use augmentation techniques and treat the problem as a (perturbation of a) perturbed saddle-point problem embedded in another saddle-point problem. The analysis hinges on working on the kernel of the divergence operator. At the discrete level we can perform a fairly similar analysis as long as we use kernel-characterising spaces, such that the divergence of the discrete velocity is zero locally in each {cell}. 
%space is contained in the discrete pressure space). As we work in quadrilateral meshes (hexahedral in 3D), 
For this we can take for instance the nonconforming Crouzeix--Raviart 
finite element pair {\cite{crouzeix1973conforming},    
which is Stokes inf-sup stable and  satisfies the required local kernel property}. 
{Pressure robust discretisations achieve velocity errors in the broken $\bH^1(\Omega)$ norm which are proportional to the best approximation error in the velocity, without dependence on the velocity error. 
In order to do so in the present setting,} we include a modification in the discrete right-hand side functional and in the  nonlinear variational forms using a lowest-order $\bH(\div,\Omega)$-conforming interpolate of the velocity test function.  This approach has been used for Stokes, Navier--Stokes, and many other variants in \cite{ahmed21,brennecke2015optimal,linke2014role,linke2015guaranteed,linke2017optimal,verfurth2019quasi}, see also the numerous references therein. This property is also closely related to the {$\bL^2(\Omega)$ orthogonality} of divergence-free functions onto the space of gradients of functions in  $\rH^1(\Omega)$. This approach comes at the price a small consistency error of optimal order, which is independent of the kinematic viscosity. 

It is important to mention that the nonconformity of the method in combination with the need to control the $\mathbf{H}(\bcurl,\Omega)\cap\mathbf{H}(\div,\Omega)$ part of the velocity norm, imply that we need to stabilise the velocity-velocity bilinear form with tangential and normal jump terms to control the divergence and curl part of the velocity norm {on the discrete level. This was} done in, e.g., \cite{brenner07,brenner08} for the grad-div, curl-curl, and reduced Maxwell problems, and we recall that such stabilisation is not required for, e.g., Stokes equations with velocity in $\mathbf{H}^1(\Omega)$.

{In many problems, solution singularities can cause suboptimal convergence, and adaptive mesh refinement is essential for recovering optimal rates.} A number of variants of residual-based a posteriori estimators are available for Crouzeix--Raviart schemes applied to Stokes equations \cite{carstensen14,carstensen12}. In addition, the literature of pressure-robust methods also has works designing a posteriori error estimators \cite{linke2015guaranteed,lederer19}, including the use of divergence-free reconstruction operators and techniques that are commonly encountered in stream function-vorticity formulations of incompressible flow. The explicit residual-based a posteriori error estimate follows the overall frame of  \cite{CGN}  (that includes historical remarks as well) 
with several additions for robustness (e.g. Lemma \ref{lem:aux-24}) required for the discrete norms. The efficiency result, however, was surprisingly subtle for some novel inverse estimate of independent innovation, we therefore establish in an appendix. 
The a posteriori error estimator contains a residual contribution and a non-conformity contribution. We show that it is reliable and efficient, where the efficiency proof relies upon a novel inverse estimate associated with element bubble functions. We use the estimator as an indicator for adaptive mesh  refinement, and this restores optimal convergence in the case of singular solutions (which under uniform mesh refinement yield suboptimal convergence). 

The particular adaptive meshes that we leverage in this work are hierarchically-adapted (i.e., nested) non-conforming octree-based meshes endowed with Morton (a.k.a., Z-shaped) space-filling-curves for storage and data partitioning; see, e.g., \cite{badia20}. This family of meshes can be very efficiently handled (i.e., refined, coarsened, re-partitioned, etc.) using high-performance and low-memory footprint algorithms \cite{Burstedde2011}. While these are $n$-cube meshes (e.g., made of quadrilaterals  or cubes in 2D and 3D, resp.), we split their elements (e.g., into 2 triangles or 6 tetrahedra, resp.) to obtain the simplicial meshes required by our finite element formulation. However, as these meshes are non-conforming (in particular they have hanging faces at cell interfaces between cells located at different levels of refinement), one needs to add additional multi-point constraints to the Crouzeix--Raviart finite element spaces in order to have optimal approximability properties. In particular, following \cite{bangerth17}, these additional constraints impose the trace average on a parent coarse face to be equivalent to the average of the trace averages on the children faces, and we show via numerical experiments that this approach recovers optimal convergence in the case of singular solutions.

\paragraph{Main contributions.} 
In summary, to the best of our knowledge, the combination of the contributions addressed above (residual a posteriori error estimators for Navier--Stokes--Brinkman--Forchheimer equations in vorticity form using non-conforming methods) is novel. In particular this work features  
\begin{itemize} 
\item a new non-augmented vorticity-based weak formulation for the Navier--Stokes equations with Brinkman and Forchheimer effects, generalising the recent work \cite{anaya23b},
\item a rigorous solvability analysis and discrete problem for vorticity that attains pressure-robustness, complementing the works  \cite{linke2014role,linke2017optimal},
\item a continuous and discrete analysis valid in 3D, extending the similar works for linear curl-curl type problems that address the 2D case  \cite{brenner07,brenner08,barker2024nonconforming},
\item a relatively simple residual-based a posteriori estimator for the pressure-robust scheme (compared to those from, e.g.,  \cite{carstensen14,lederer19}),
\item the efficiency of the residual a posteriori error estimator requires an interesting novel result regarding inverse estimates for the Forchheimer nonlinearity,
\item efficient and reliable a posteriori estimators, which we prove theoretically and also confirmed numerically (noting that the effectivity index --the ratio between the total error and the global a posteriori error estimator-- remains bounded between 1.6 and 1.9 for the tested cases), 
\item new handling of multipoint constraints needed for implementation of Crouzeix--Raviart elements with hanging nodes in adaptive meshes constructed with octrees. This generalises the results from \cite{badia20}.   
\end{itemize}

\paragraph{Outline.} This article is organised as follows: in the remainder of this section we provide notational conventions and main assumptions to use throughout the paper. In Section~\ref{sec:model} we give a brief overview on the governing equations and their statement in weak perturbed saddle-point form. Section~\ref{sec:continuous-analysis} deals with the well-posedness analysis using Banach fixed-point theorem under small data assumptions and a global inf-sup argument. The definition of the discrete problem and the analysis of its unique solvability are addressed in Section~\ref{sec:discrete-problem}. In Section~\ref{sec:apriori} we derive C\'ea estimates and error bounds for the specific finite element subspaces mentioned above. The definition of a posteriori error estimators and their robustness analysis is given in Section~\ref{sec:aposteriori}.   We {continue} in Section~\ref{sec:examples} describing the benchmark setups we used in the numerical experiments, showcasing the properties of the proposed schemes, and confirming numerically the predicted a priori convergence estimates and robustness of the a posteriori error estimators.  
{In Section~\ref{sec:concl} we give some concluding remarks, and Appendix \ref{sec:inverse} provides an inverse estimate for the efficiency proof of its own interest}. 

%%%%%%%%%%%%%%%%%%%%%%%%%%%%%%%%%%%%%%%%%%%%%%%

\paragraph{Preliminaries and notation.}
Let us denote by $\Om \subset \bbR^3$ a {connected} bounded polyhedral Lipschitz domain  with boundary $\Gamma$.  Standard notations will be adopted for Lebesgue spaces $\rL^p(\Om)$, with $p\in[1,\infty]$ and Sobolev spaces $\rW^{r,p}(\Om)$ with  $r\geq 0$, endowed with the norms $\|\bullet\|_{\rL^p(\Om)}$ and $\|\bullet\|_{\rW^{r,p}(\Om)}$. Note that $\rW^{0,p}(\Om)=\rL^p(\Om)$ and if $p=2$, we write $\rH^r(\Om)$ in place of 
$\rW^{r,2}(\Om)$, with the corresponding Lebesgue and Sobolev norms denoted by 
$\|\bullet\|_{0,\Om}$ and $\|\bullet\|_{r,\Om}$.  We also write $|\bullet|_{r,\Om}$ for the $\rH^r$-seminorm. The {space $\rL_0^2(\Omega)$ denotes the set of square-integrable functions on $\Omega$ with zero mean value $\rL_0^2(\Omega):=\{f\in \rL^2(\Omega): \int_\Omega f = 0\}$}. {The bracket $\langle \bullet,\bullet \rangle_\Gamma$ denotes duality that extends the $\rL^2(\Gamma)$  scalar product for smooth functions in the trace space  $\rH^{1/2}(\Gamma)$ of $\rH^1(\Omega)$ and its dual $\rH^{-1/2}(\Gamma)$}. By $\bS$ we will denote the corresponding vectorial counterpart of the generic scalar functional space $S$. {The gradient}, symmetric gradient, divergence and curl of a generic vector field $\bv=(v_i)$ are defined as
\[
\disp
 \bnabla \bv:=\left(\frac{\partial v_i}{\partial x_j}\right)_{i,j=1,3},\quad \bD\bv:=\frac12(\bnabla\bv +\bnabla\bv^T), \quad 
 \div \,\bv:=\sum_{j=1}^{3}\frac{\partial v_j}{\partial x_j},\quad \mbox{ and }\quad
 \bcurl\bv := \bnabla\times\bv.
\]
In addition, we recall that the spaces
\begin{align*}
\bH_0(\div;\Om)&:=\,\big\{ \bv \in \bL^2(\Om) :\quad \div\,\bv \in \rL^2(\Om) \qan {\gamma_\bn\bv=0} \qon \partial\Omega\big\},\\
\bH_0(\bcurl;\Om)&:=\,\big\{ \bv \in \bL^2(\Om) :\quad \bcurl\,\bv \in \bL^2(\Om)\qan {\gamma_\btau\bv=\bzero} \qon \partial\Omega\big\},
\end{align*}
where $\gamma_\bn$ and $\gamma_\btau$ represent the normal and tangential trace {operators}, respectively, and Hilbert when equipped with the norms $\|\bv\|_{\div,\Om}^{2}:=\|\bv\|_{0,\Om}^{2}
+\|\div\,\bv\|_{0,\Om}^{2}$ and  $\|\bv\|_{\bcurl,\Om}^{2}:=\|\bv\|_{0,\Om}^{2}
+\|\bcurl\,\bv\|_{0,\Om}^{2}$, respectively. Then, we define the following space
\[
\bV:=\bH_0(\div;\Om)\cap\bH_0(\bcurl;\Om),
\]
endowed with the norm
\[
\|\bv\|_{\bV}^{2}:=\|\bv\|_{0,\Om}^{2} +\|\div\,\bv\|_{0,\Om}^{2}
+\|\bcurl\,\bv\|_{0,\Om}^{2}.
\]
{Finally, the notation $A\lesssim B$ abbreviates $A\le CB$ with a generic  $h$  
 (mesh size)-independent  constant $C$, while some of the constants are still written {explicitly} to emphasise and quantify particular assumptions.}

%%%%%%%%%%%%%%%%%%%%%%%%%%%%%%%%%%%%%%%%%%%%%%%
%%%%%%%%%%%%%%%%%%%%%%%%%%%%%%%%%%%%%%%%%%%%%%%
\section{Model problem and its weak formulation}\label{sec:model}
%Let us consider a simply connected bounded and Lipschitz domain $\Omega\subset\bbR^3$ with boundary $\partial\Omega$, occupied by an incompressible fluid. 
\subsection{The governing equations}
We start with the steady Navier--Stokes--Brinkman--Forchheimer equations in their usual velocity--pressure form. They consist in finding velocity $\bu$ and kinematic pressure $P$ such that 
\begin{equation}\label{eq:NS}
\kappa^{-1}\bu-\nu \bDelta \bu + (\bu\cdot\bnabla)\bu + \rF|\bu|\,\bu  + \dfrac{1}{\rho}\nabla P  = \, \f \qin\Omega,\qquad
\div\,\bu =0  \qin\Omega
\end{equation}
with $\kappa$ the permeability of the porous media (assumed {heterogeneous but uniformly bounded away from zero}), $\nu$  the kinematic fluid viscosity, $\rF>0$ the Forchheimer coefficient, $\rho$ the fluid mass density, and $\f$ a given external force.
Problem \eqref{eq:NS} can be equivalently set in terms of vorticity, velocity and pressure (similarly as done in, e.g., \cite{anaya16} for Brinkman and in \cite{anaya19} for Oseen equations). For this, we introduce the rescaled vorticity vector
\[ \bomega := \sqrt{\nu} \,\bcurl \bu\]
and use the identity $\bu\cdot\bnabla\bu = \bcurl\bu \times \bu + \frac12 \nabla(\bu\cdot\bu) $. Then we introduce the rescaled Bernoulli pressure 
\[ p := \frac{1}{\rho} 
P + \frac{1}{2} \bu \cdot \bu - \lambda\]
for $\lambda$ defined as the mean value of $\frac{1}{2} \bu \cdot \bu$; and employ the following vector identity
\[ \bcurl \bcurl \bu = -\bDelta\bu + \nabla(\div\,\bu)\]
together with the incompressibility constraint. These steps lead to the following equations
\begin{equation}\label{eq:ns-new}
 \kappa^{-1}\bu + \sqrt{\nu} \bcurl\bomega + \rF|\bu|\,\bu + \nabla p + \frac{1}{\sqrt{\nu}}\,\bomega\times \bu  \,=\, \f, \qquad \bomega - \sqrt{\nu} \bcurl \bu  \,=\, \0, \qquad  \div\,\bu \,=\, 0.
\end{equation}
Furthermore, we focus on  homogeneous Dirichlet boundary conditions for velocity and therefore an additional condition is required to enforce the uniqueness of the Bernoulli pressure. This gives 
\begin{equation}\label{eq:boundary}
\bu=\0  \qon \Gamma \qan \int_\Omega p=0. 
\end{equation}
However, similar results as those shown below are also valid for other types of boundary conditions. 
%Then, we define the spaces
%\begin{equation*}
%\bH_0(\bcurl;\Om)\,:=\,\big\{ \bv \in \bH(\bcurl;\Om):\quad \bv \times\bn=0\qon\Gamma\big\},
%\end{equation*}
%\begin{equation*}
%\bH_0(\div;\Om)\,:=\,\big\{ \bv \in \bH(\div;\Om):\quad \bv \cdot\bn=0\qon\Gamma\big\},
%\end{equation*}
%\begin{equation*}
%\bH_0^1(\Om)\,:=\,\big\{ \bv \in \bH^1(\Om):\quad \bv=0\qon\Gamma\big\},
%\end{equation*}
%\begin{equation*}
%\bX:= \bH_\kappa^{-1}(\bcurl;\Om)\cap\bH_\Gamma(\div;\Om),\qquad \bY:=\bL^2(\Omega)\qan \rZ:=\rL^2(\Omega)
%\end{equation*}
%%%%%%%%%%%%%%%%%%%%%%%%%%%%%%%%%%%%%%%%%%%%%%%
\subsection{Mixed weak formulation}
%Here, we derive our mixed problem and define the forms and functionals involved.
First, {assume that $\f\in \bL^2(\Omega)$, $\kappa \in \rL^\infty(\Omega)$ and that there exists constants $\kappa_{\min},\kappa_{\max}$ such that $0<\kappa_{\min}\leq \kappa(\bx) \leq \kappa_{\max}$ a.e. in $\Omega$. Multiplying} the first, second and third equations of \eqref{eq:ns-new} by $\bv\in\bV$, $\btheta\in\bL^2(\Om)$ and $q\in \rL_0^2(\Om)$,
%$:=\{q\in\rL^2(\Om):\quad \int_\Om q=0\}$, 
respectively, integrating by parts and utilising the boundary condition, we obtain the problem: Find 
$((\bu,\bomega),p)\in[\bV\times\bL^2(\Om)]\times \rL_0^2(\Om)$ such that
\begin{align}\label{eq:weak}\nonumber
\int_\Omega {\kappa^{-1}}\bu\cdot \bv +\sqrt{\nu}\int_\Omega \bomega\cdot \bcurl \bv  - \int_\Omega p\,\div \bv - \dfrac{1}{\sqrt{\nu}}\int_\Omega (\bu\times\bomega)\cdot\bv + \rF\int_\Omega |\bu|\,\bu\cdot \bv &= \int_\Omega \f\cdot\bv,\\ 
\sqrt{\nu}\int_\Omega \btheta\cdot \bcurl \bu - \int_\Omega \bomega\cdot\btheta  &= 0,\\
 -\int_\Omega q\,\div\, \bu  &=  0.\nonumber 
\end{align}
%Now, for the sake of the subsequent analysis, we closely follow \cite{lamichhane24} (see also  \cite{gatica03} for similar works) 
%We adopt a two-fold saddle point formulation with a nonlinear %perturbation. Then, 
We introduce the bounded bilinear forms $a:[\bV\times\bL^2(\Om)]\times[\bV\times\bL^2(\Om)]\to\bbR$,  $b:[\bV\times\bL^2(\Om)]\times \rL_0^2(\Om)\to\bbR$, and, {for a given and fixed $\wh\bu\in\bV$ (to be considered in the fixed-point argument of Section~\ref{sec:continuous-analysis})}, the bilinear form $c^{\wh\bu}:[\bV\times\bL^2(\Om)]\times[\bV\times\bL^2(\Om)]\to\bbR$ as 
\begin{subequations}
\begin{align}
\label{eq:def-a}
a(\bu, \bomega;\bv,\btheta)& := \int_\Omega {\kappa^{-1}}\bu\cdot \bv + \sqrt{\nu}\int_\Omega \bomega\cdot \bcurl \bv + \sqrt{\nu}\int_\Omega \btheta\cdot \bcurl \bu  - \int_\Omega \bomega\cdot \btheta, \\
\label{eq:forms-b}
b(\bv,\btheta;q) & := - \int_\Omega q\,\div \bv,\\
\label{eq:forms-c}
c^{\wh\bu}(\bu,\bomega;\bv,\btheta) &:=  -\dfrac{1}{\sqrt{\nu}}\int_\Omega (\wh\bu\times\bomega)\cdot\bv  +\rF\int_\Omega |\wh\bu|\,\bu\cdot \bv.
\end{align}\end{subequations}
{Note that even if the bilinear form $b$ does not depend explicitly on the vorticity, we still write as in \eqref{eq:forms-b} to indicate a saddle-point structure in \eqref{eq:weak-formulation} below using the space $\bV\times\bL^2(\Om)$.}
On the other hand, we define the functional $\bF\in [\bV\times\bL^2(\Om)]'$ as
\begin{equation}\label{def:functional-F}
 F(\bv,\btheta) := \int_\Omega \f\cdot\bv.
\end{equation}
Then, the formulation consists in finding $((\bu, \bomega),p)\in [\bV\times\bL^2(\Om)]\times \rL_0^2(\Om)$, such that:
\begin{equation}\label{eq:weak-formulation}
\begin{array}{llll}
a(\bu,\bomega;\bv,\btheta)  &  +\quad b(\bv,\btheta;p) & +\quad c^{\bu}(\bu,\bomega;\bv,\btheta) & =  F(\bv,\btheta), \\ [1ex]
 \qquad b(\bu,\bomega;q) & \, & \,& = 0,
\end{array}
\end{equation}
for all $((\bv,\btheta),q)\in [\bV\times\bL^2(\Om)]\times \rL_0^2(\Om)$.

%%%%%%%%%%%%%%%%%%%%%%%%%%%%%%%%%%%%%%%%%%%%%%%%%%%%%
\section{Analysis of the coupled problem}\label{sec:continuous-analysis}
The following well-known symmetric and non-symmetric versions of the generalised Lax--Milgram lemma will be used in the forthcoming analysis (for a proof see, e.g.,  \cite[Theorems 1.3 \& 1.2]{gatica14}). 
\begin{lemma}\label{lemma:invertibility-of-T}
Let $\rH$ be a real Hilbert space, and let $A: \rH\times\rH \to \bbR$ be a symmetric and bounded bilinear form. Assume that  
\begin{equation}\label{eq:inf-sup-lemma}
\sup_{0\neq v\in \rH} \frac{A(u,v)}{\|v\|_\rH} \,\geq\, \wt\alpha\,\|u\|_\rH \quad \forall\,u\in \rH.
\end{equation}
Then, for each $F\in\rH'$ there exists a unique $u\in\rH$ such that 
\begin{equation*}
A(u,v)=F(v) \qquad \forall\,v\in\rH,
\qquad 
\text{and}
\qquad %begin{equation*}
\|u\|_\rH \leq \dfrac{1}{\wt\alpha}\|F\|_{\rH'}.
\end{equation*}
\end{lemma}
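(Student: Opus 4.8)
The statement is the classical symmetric Babu\v{s}ka--Lax--Milgram lemma, so the plan is to reduce it to a statement about a single self-adjoint operator on $\rH$ and invoke the Hilbert-space closed-range argument. First I would use the Riesz representation theorem to encode the bilinear form: define $\cA:\rH\to\rH$ by $(\cA u,v)_\rH = A(u,v)$ for all $u,v\in\rH$. Boundedness of $A$ immediately gives $\cA\in\mathcal{L}(\rH)$, and symmetry of $A$ gives $(\cA u,v)_\rH=(u,\cA v)_\rH$, i.e.\ $\cA=\cA^{*}$. The crucial observation is that the supremum in the hypothesis \eqref{eq:inf-sup-lemma} is exactly $\|\cA u\|_\rH$, since $\sup_{0\neq v}\frac{(\cA u,v)_\rH}{\|v\|_\rH}=\|\cA u\|_\rH$; hence \eqref{eq:inf-sup-lemma} reads
\[
\|\cA u\|_\rH \;\geq\; \wt\alpha\,\|u\|_\rH \qquad \forall\,u\in\rH .
\]

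From this inequality two facts follow in the usual way. Injectivity of $\cA$ is immediate, so $\ker\cA=\{0\}$. For closedness of the range, take a sequence $\cA u_n\to w$ in $\rH$; the estimate shows $\|u_n-u_m\|_\rH\leq \wt\alpha^{-1}\|\cA u_n-\cA u_m\|_\rH$, so $(u_n)$ is Cauchy, $u_n\to u$, and by continuity $\cA u=w\in\mathrm{ran}\,\cA$. Thus $\mathrm{ran}\,\cA$ is closed. Now I would use the orthogonal decomposition for the self-adjoint operator: $\overline{\mathrm{ran}\,\cA}=(\ker\cA^{*})^{\perp}=(\ker\cA)^{\perp}=\{0\}^{\perp}=\rH$, and since the range is closed this gives $\mathrm{ran}\,\cA=\rH$. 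Therefore $\cA$ is a bijection of $\rH$ onto itself.

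To conclude, given $F\in\rH'$ let $\f_{F}\in\rH$ be its Riesz representative, so $F(v)=(\f_{F},v)_\rH$ for all $v$ and $\|\f_{F}\|_\rH=\|F\|_{\rH'}$. By bijectivity of $\cA$ there is a unique $u\in\rH$ with $\cA u=\f_{F}$, equivalently $A(u,v)=(\cA u,v)_\rH=(\f_{F},v)_\rH=F(v)$ for all $v\in\rH$; uniqueness of $u$ is exactly $\ker\cA=\{0\}$. Finally the a priori bound is read off from the displayed inequality: $\wt\alpha\,\|u\|_\rH\leq\|\cA u\|_\rH=\|\f_{F}\|_\rH=\|F\|_{\rH'}$, i.e.\ $\|u\|_\rH\leq\wt\alpha^{-1}\|F\|_{\rH'}$.

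The only genuinely delicate point is the step $\overline{\mathrm{ran}\,\cA}=(\ker\cA)^{\perp}$ together with closedness of the range, i.e.\ the passage from \emph{dense} solvability to \emph{exact} solvability; this is where symmetry (self-adjointness of $\cA$) is indispensable, and it is the analogue, in the symmetric case, of the second (inf-sup-in-the-test-space) condition that appears in the non-symmetric Banach--Ne\v{c}as--Babu\v{s}ka framework. Everything else is routine bookkeeping with the Riesz map.
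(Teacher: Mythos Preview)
Your proof is correct and is the standard Riesz-representation/closed-range argument for the symmetric Babu\v{s}ka--Lax--Milgram lemma. The paper does not actually supply its own proof of this lemma: it is stated as a well-known preliminary and deferred to \cite[Theorems~1.2 \& 1.3]{gatica14}, so there is nothing substantive to compare against beyond noting that your argument is precisely the one underlying that reference.
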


\begin{lemma}\label{lemma:invertibility-of-T2}
Let $\rH_1,\rH_2$ be real Hilbert spaces, and let $B: \rH_1\times\rH_2 \to \bbR$ be a  bounded bilinear form. Assume that  
\begin{subequations}\label{eq:inf-sup-lemma2}
\begin{align}
\sup_{0\neq v\in \rH_2} \frac{B(u,v)}{\|v\|_{\rH_2}} &\,\geq\, \widehat{\alpha}\,\|u\|_{\rH_1} \quad \forall\,u\in \rH_1,\\
\sup_{u\in \rH_1} B(u,v)&\,>0 \quad \forall\,v\in \rH_2,v\neq 0. 
\end{align}
\end{subequations}
Then, for each $F\in\rH_2'$ there exists a unique $u\in\rH_1$ such that 
\begin{equation*}
B(u,v)=F(v) \qquad \forall\,v\in\rH_2,
\qquad 
\text{and}
\qquad %begin{equation*}
\|u\|_{\rH_1} \leq \dfrac{1}{\widehat{\alpha}}\|F\|_{\rH_2'}.
\end{equation*}
\end{lemma}

We will  combine these results with 
%the Banach--Ne{\v c}as--Babu{\v s}ka theorem  \cite[Theorem 2.6]{ern04} and 
the Banach fixed-point theorem to demonstrate the well-posedness of \eqref{eq:weak-formulation} under a small data assumption.

\subsection{Stability properties of a linear problem}
First we recall the continuous embedding from  $\H^1(\Omega)$  into $\rL^\rp(\Omega)$, for all $\rp\in [1,6]$: 
\begin{equation}\label{eq:Sobolev-inequality}
\|w\|_{\rL^\rp(\Omega)}\leq   C_\rS\,\|w\|_{1,\Omega}\quad 
\forall\,w \in \H^1(\Omega)
\end{equation}
with $C_\rS> 0$ depending only on $|\Omega|$ and $\rp$ (see \cite[Theorem 1.3.4]{quarteroni94}).

Next we easily deduce {from} the Cauchy--Schwarz inequality, the continuity of  $a(\bullet,\bullet)$, $b(\bullet,\bullet)$: 
\begin{subequations}
\begin{align}\label{eq:bounded-a}
\big|a(\bu, \bomega;\bv, \btheta)\big|&\leq ({\kappa_{\min}^{-1}}+\sqrt{\nu}+1)(\|\bu\|_{\bV} + \|\bomega\|_{0,\Omega} )(\|\bv\|_{\bV} + \|\btheta\|_{0,\Omega}),\\
\label{eq:bounded-b}
\big|b(\bv,\btheta;q)\big|&\leq \|q\|_{0,\Omega}(\|\bv\|_{\bV} + \|\btheta\|_{0,\Omega}).
\end{align}
\end{subequations}
In turn, using H\"older's inequality  
%\begin{equation}\label{eq:Holder-inequality}
%\int_{\Omega} |fg| \leq \|f\|_{\rL^\rp(\Omega)} \|g\|_{\rL^\rq(\Omega)},\quad \forall\, f\in \rL^\rp(\Omega),\,\,\forall\, g\in \rL^\rq(\Omega), \quad\mbox{with}\quad \frac{1}{\rp} + \frac{1}{\rq} = 1,
%\end{equation}
together with \eqref{eq:Sobolev-inequality}, we readily deduce that
\begin{equation}\label{eq:bounded-c}
\big|c^{\wh\bu}(\bu,\bomega;\bv, \btheta)\big|\leq C_\rS^2 \Big(\dfrac{1}{\sqrt{\nu}}+\rF \Big)\|\wh\bu\|_{\bV}(\|\bu\|_{\bV} + \|\bomega\|_{0,\Om})(\|\bv\|_{\bV} + \|\btheta\|_{0,\Om}).
\end{equation}
%\begin{equation}\label{eq:bounded-C}
%\big|C_{\wh\bu}((\bomega,\bu),\bv)\big|\leq \dfrac{1}{\sqrt{\nu}}\|\wh\bu\|_{\bL^4(\Om)}\|\bomega\|_{0,\Om}\|\bv\|_{\bL^4(\Om)} + \rF\|\wh\bu\|_{\bL^4(\Om)}\|\bu\|_{0,\Om}\|\bv\|_{\bL^4(\Om)},
%\end{equation}
Similarly, the linear functional $F(\bullet)$  is bounded
\begin{equation}\label{des:bound-F}
\big|F(\bv,\btheta)\big|\leq   \|\f\|_{0,\Omega}(\|\bv\|_{\bV} + \|\btheta\|_{0,\Omega}).
\end{equation}

Now, it is straightforward to see that the kernel of the bilinear form $b(\bullet,\bullet)$ is a closed subspace of $\bV\times\bL^2(\Om)$. It is denoted as $\bV_0 \times\bL^2(\Om)$, and the first component admits the characterisation
\begin{equation}\label{eq:kernel-b}
\bV_0:=\{\bv\in\bV:\quad \div\,\bv=0\qin\Om\}\,.
\end{equation}
\begin{lemma}\label{lemma:invert-a}
The bilinear form $a(\bullet,\bullet)$ induces an invertible operator on $\bV_0\times\bL^2(\Omega)$. 
%the kernel of the bilinear form $b(\bullet,\bullet)$. 
\end{lemma}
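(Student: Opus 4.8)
The plan is to verify the hypotheses of Lemma~\ref{lemma:invertibility-of-T}, applied on the Hilbert space $\rH := \bV_0\times\bL^2(\Om)$ equipped with the natural product norm $\|(\bv,\btheta)\|_\rH^2 := \|\bv\|_\bV^2 + \|\btheta\|_{0,\Om}^2$. The bilinear form $a(\bullet,\bullet)$ is symmetric by inspection of \eqref{eq:def-a} (the two cross terms $\sqrt{\nu}\int_\Omega\bomega\cdot\bcurl\bv$ and $\sqrt{\nu}\int_\Omega\btheta\cdot\bcurl\bu$ are exchanged under swapping the arguments, and the remaining two terms are manifestly symmetric), and it is bounded by \eqref{eq:bounded-a}. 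The only thing to prove is the global inf--sup condition \eqref{eq:inf-sup-lemma} on $\bV_0\times\bL^2(\Om)$.

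First I would fix $(\bu,\bomega)\in\bV_0\times\bL^2(\Om)$ and seek a test pair $(\bv,\btheta)$ that makes $a(\bu,\bomega;\bv,\btheta)$ control $\|(\bu,\bomega)\|_\rH^2$ from below. The natural first guess $(\bv,\btheta)=(\bu,-\bomega)$ gives $a(\bu,\bomega;\bu,-\bomega) = \kappa^{-1}\|\bu\|_{0,\Om}^2 + \|\bomega\|_{0,\Om}^2$, since the two $\sqrt\nu$ cross terms cancel; this controls $\|\bu\|_{0,\Om}^2$ and $\|\bomega\|_{0,\Om}^2$ but not $\|\bcurl\bu\|_{0,\Om}^2$ (recall $\|\div\bu\|_{0,\Om}=0$ on $\bV_0$, so the $\bV$-norm of $\bu$ is $\|\bu\|_{0,\Om}^2+\|\bcurl\bu\|_{0,\Om}^2$). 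To capture the curl part, I would add a multiple of the test pair $(\bzero, \bcurl\bu)$ — legitimate since $\bcurl\bu\in\bL^2(\Om)$ — which yields $a(\bu,\bomega;\bzero,\bcurl\bu) = \sqrt\nu\|\bcurl\bu\|_{0,\Om}^2 - \int_\Omega\bomega\cdot\bcurl\bu$. Then taking $(\bv,\btheta) = (\bu,\,-\bomega + \delta\,\bcurl\bu)$ for a small parameter $\delta>0$ and using Young's inequality on the indefinite term $-\delta\int_\Omega\bomega\cdot\bcurl\bu$ (absorbing part into the $\|\bomega\|_{0,\Om}^2$ and $\delta\sqrt\nu\|\bcurl\bu\|_{0,\Om}^2$ contributions) produces a lower bound of the form $C_1\|\bu\|_{0,\Om}^2 + C_2\|\bcurl\bu\|_{0,\Om}^2 + C_3\|\bomega\|_{0,\Om}^2 \gtrsim \|(\bu,\bomega)\|_\rH^2$, with constants depending on $\kappa$, $\nu$, and $\delta$; one also checks $\|(\bv,\btheta)\|_\rH \lesssim \|(\bu,\bomega)\|_\rH$, giving \eqref{eq:inf-sup-lemma} with an explicit $\wt\alpha$.

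The main obstacle is the sign-indefinite coupling term $-\int_\Omega\bomega\cdot\btheta$ together with the two $\bcurl$ cross terms, which make $a$ a genuine perturbed saddle-point (indefinite) form rather than a coercive one; the remedy is precisely the sign flip $\btheta\mapsto-\bomega$ plus the $\delta\,\bcurl\bu$ correction and a careful Young-inequality bookkeeping to ensure all three squared quantities survive with positive coefficients. Once \eqref{eq:inf-sup-lemma} is established, Lemma~\ref{lemma:invertibility-of-T} immediately gives that the operator induced by $a$ on $\bV_0\times\bL^2(\Om)$ is invertible with a bounded inverse, which is the claim. One should remark that this is exactly the abstract structure of a perturbed saddle-point problem (the $(\bomega,\btheta)$-block being negative definite and the $(\bu,\bomega)$-coupling satisfying an inf--sup condition on the $\bcurl$ pairing), so an alternative route is to invoke a Brezzi-type theorem for perturbed saddle-point problems; I would nonetheless prefer the direct test-function argument above since it yields an explicit stability constant that feeds into the later fixed-point analysis.
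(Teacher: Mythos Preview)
Your proposal is correct and follows essentially the same strategy as the paper: verify boundedness and symmetry, then establish the inf--sup condition \eqref{eq:inf-sup-lemma} by exhibiting an explicit test pair built from $(\bu,\bomega)$ with a $\bcurl\bu$ correction in the second slot, and invoke Lemma~\ref{lemma:invertibility-of-T}.

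The only notable difference is in the particular test function. You take $(\bv,\btheta)=(\bu,-\bomega+\delta\,\bcurl\bu)$, which leaves the residual cross term $-\delta\int_\Omega\bomega\cdot\bcurl\bu$ to be absorbed via Young's inequality and a suitable choice of $\delta$. The paper instead takes $(\wh\bv,\wh\btheta)=(2\bu,\sqrt{\nu}\,\bcurl\bu-\bomega)$; the factor $2$ in the first slot generates an additional $+\sqrt{\nu}\int_\Omega\bomega\cdot\bcurl\bu$ which cancels the cross term \emph{exactly}, yielding directly
\[
a(\bu,\bomega;\wh\bv,\wh\btheta)=2\kappa^{-1}\|\bu\|_{0,\Omega}^2+\nu\|\bcurl\bu\|_{0,\Omega}^2+\|\bomega\|_{0,\Omega}^2
\]
without any Young-inequality bookkeeping, and hence a cleaner explicit constant $\alpha=\dfrac{\min\{2\kappa^{-1},\nu,1\}}{2(3+\sqrt{\nu})}$. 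Your argument is equally valid; the paper's choice simply avoids the auxiliary parameter and the absorption step.
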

\begin{proof}
We proceed using Lemma~\ref{lemma:invertibility-of-T}. 
First, from \eqref{eq:bounded-a} we observe that   $a(\bullet,\bullet)$ is bounded. To show that it also satisfies   the inf-sup condition \eqref{eq:inf-sup-lemma}, we proceed as in, e.g., \cite[Section~2.3]{anaya19}. For all $(\bz,\bzeta)\in\bV_0\times\bL^2(\Omega)$ (see \eqref{eq:kernel-b}), we can define $\wh\bv:= 2\bz$ and $\wh\btheta:= \sqrt{\nu}\bcurl\bz - \bzeta$, and then immediately assert that 
\begin{equation*}
a(\bz,\bzeta;\wh\bv,\wh\btheta) = 2\|{\kappa^{-1/2}}\bz\|^2_{0,\Omega} + \nu\|\bcurl\bz\|^2_{0,\Omega} + \|\bzeta\|^2_{0,\Omega}\geq \min\{2{\kappa_{\max}^{-1}},\nu\}\|\bz\|^2_{\bV} + \|\bzeta\|^2_{0,\Omega}.
\end{equation*}
Furthermore, it is clear that $\|\wh\bv\|_{\bV}=2\|\bz\|_{\bV}$ and $\|\wh\btheta\|_{0,\Omega}\leq (1+\sqrt{\nu})(\|\bz\|_{\bV} + \|\bzeta\|_{0,\Omega})$, and from this, we can conclude that
\begin{equation}\label{eq:inf-sup-a}
\sup_{\0\neq (\bv,\btheta)\in \bV_0\times\bL^2(\Om)}
\frac{a(\bz,\bzeta;\bv,\btheta)}{\|\bv\|_\bV + \|\btheta\|_{0,\Om}}\, \geq\, \frac{a(\bz,\bzeta;\wh\bv,\wh\btheta)}{\|\wh\bv\|_\bV + \|\wh\btheta\|_{0,\Om}} 
\geq \alpha\,( \|\bz\|_\bV + \|\bzeta\|_{0,\Om})
\quad \forall\,(\bz, \bzeta)\in \bV_0\times\bL^2(\Om)
\end{equation}
with $\alpha:=\dfrac{\min\{2{\kappa_{\max}^{-1}},\nu,1\}}{2(3+\sqrt{\nu})}$. Thus, the result follows.
\end{proof}

On the other hand, from the equivalence $\bH_0^1(\Om)=\bV$ (see \cite[Lemma~2.5]{girault79}), we have that $b(\bullet,\bullet)$ satisfies the following inf-sup condition (see \cite[Section~5.1]{girault79})
\begin{equation}\label{eq:inf-sup-b}
\sup_{\0\neq(\bv,\btheta)\in \bV\times\bL^2(\Om)} \frac{ b(\bv,\btheta;q)}{\|\bv\|_\bV + \|\btheta\|_{0,\Omega}} \geq \beta\,\|q\|_{0,\Om}
\quad \forall\,q\in \rL_0^2(\Om).
\end{equation}

Let us now define the bilinear form $\cA: \big([\bV\times\bL^2(\Om)]\times \rL_0^2(\Om)\big)\times\big([\bV\times\bL^2(\Om)]\times \rL_0^2(\Om)\big)\to\bbR$ as
\begin{equation}\label{eq:def-A}
\cA\big(\bz,\bzeta,r;\bv,\btheta,q):=a(\bz,\bzeta;\bv,\btheta)\, + \,b(\bz,\bzeta;q) \, +\, b(\bv,\btheta;r).\, 
\end{equation}
Owing to \eqref{eq:bounded-a} and \eqref{eq:bounded-b}, it is clear that $\cA(\bullet,\bullet)$ is bounded. Moreover, from \eqref{eq:inf-sup-a}, \eqref{eq:inf-sup-b} and \cite[Proposition 2.36]{ern04} it is not difficult to see that the following inf-sup condition holds:
\begin{equation}\label{eq:global-inf-sup-A}
\sup_{\0\neq((\bv,\btheta),q)\in [\bV\times\bL^2(\Om)]\times \rL_0^2(\Om) } \frac{ \cA(\bz,\bzeta,r;\bv,\btheta,q)}{\|((\bv,\btheta),q)\|} \geq \gamma\,\big\|((\bz,\bzeta),r)\big\| 
\end{equation}
for all $((\bz,\bzeta),r)\in [\bV\times\bL^2(\Om)]\times \rL_0^2(\Om)$, where $\|((\bz,\bzeta),r)\|:=\|\bz\|_{\bV} + \|\bzeta\|_{0,\Om} + \|r\|_{0,\Om}$, and
\begin{equation}\label{def:gamma}
\gamma= \dfrac{\min\{2{\kappa_{\max}^{-1}},\nu,1\}\beta^2}{(\beta+ {\kappa_{\min}^{-1}} + \sqrt{\nu}+ 2 )^2}.
\end{equation}

%%%%%%%%%%%%%%%%%%%%%%%%%%%%%%%%%%%%%%%%%%%%%%%
\subsection{Well-posedness analysis via Banach fixed-point}\label{sec:Well-posedness of the model}
%%%%%%%%%%%%%%%%%%%%%%%%%%%%%%%%%%%%%%%%%%%%%%%%%%%%%%%%%%%%%%%%%%%%%%%%%%%%%%%%%%%%%%%%%%%%%%%%%%%%%%%%%%%%
%\subsection{The fixed-point operator}\label{sec:fixed-point-opetaror}
We proceed similarly to \cite{caucao20} using a fixed-point strategy to  prove the well-posedness of \eqref{eq:weak-formulation}. Let us introduce the bounded set
\begin{equation}\label{eq:def-W}
\bK\,:=\,\Big\{ \wh\bu \in \bV :\quad  \|\wh\bu\|_{\bV}\leq \dfrac{2}{\gamma}\|\f\|_{0,\Om}   \Big\}
\end{equation}
with $\gamma$ the constant defined in \eqref{def:gamma}.
Then, we define a fixed-point operator as
\begin{equation}\label{eq:def-J}
\crF:\bK\to \bK, \quad \wh\bu\to\crF(\wh\bu)=\bu,
\end{equation}
where, given $\wh\bu\in\bK$, $\bu$ is the first component of $(\bu,\bomega)$, where $((\bu,\bomega),p)\in[\bV\times\bL^2(\Om)]\times \rL_0^2(\Om)$ is the solution of the linearised version of problem \eqref{eq:weak-formulation}: Find $((\bu,\bomega),p)\in[\bV\times\bL^2(\Om)]\times \rL_0^2(\Om)$ such that
\begin{equation}\label{eq:linear-weak-formulation}
\begin{array}{llll}
a(\bu,\bomega;\bv, \btheta)  &  +\quad b(\bv,\btheta;p) & +\quad c^{\wh\bu}(\bu,\bomega;\bv, \btheta) & =  F(\bv,\btheta), \\ [1ex]
 \qquad b(\bu,\bomega;q) & \, & \,& = 0 
\end{array}
\end{equation}
for all $((\bv,\btheta),q)\in [\bV\times\bL^2(\Om)]\times \rL_0^2(\Om)$. It is clear that $((\bu,\bomega),p)$ is a solution to \eqref{eq:weak-formulation} if and only if $\bu$ satisfies $\crF(\bu) = \bu$, and consequently, the well-posedness of \eqref{eq:weak-formulation} is equivalent to the unique solvability of the fixed-point problem: Find $\bu\in \bK$ such that
\begin{equation}\label{eq:fixed-point-problem}
\crF(\bu) = \bu.
\end{equation}

\noindent In this way, in what follows we focus on proving the unique solvability of \eqref{eq:fixed-point-problem}.
%%%%%%%%%%%%%%%%%%%%%%%%%%%%%%%%%%%%%%%%%%%%%%%%%%%%%%%%%%%%%%%%%%%%%%%%%%%%%%%%%%%%%%%%%%%%%%%%%%%%%%%%%%%%%%%%%%%%%%%%%%%
\subsection{Well-definiteness of the fixed-point map}
Let us first provide sufficient conditions under which the operator $\crF$ (cf. \eqref{eq:def-J}) is well-defined, or equivalently, the problem \eqref{eq:linear-weak-formulation} is well-posed.
\begin{lemma}[Unique solvability of the linearised problem]\label{lem:well-def-J}
Let	$\wh\bu \in \bK$ and assume that 
\begin{equation}\label{eq:assumption-J}
\dfrac{4}{\gamma^2} C_\rS^2 \Big(\dfrac{1}{\sqrt{\nu}}+\rF \Big)\|\f\|_{0,\Omega} \leq 1 
\end{equation}
with $\gamma$ the positive constant in \eqref{def:gamma}.
Then, there exists a unique $((\bu,\bomega),p)\in[\bV\times\bL^2(\Om)]\times \rL_0^2(\Om)$ solution to \eqref{eq:linear-weak-formulation}.
In addition, there holds
\begin{equation}\label{eq:dep}
\|((\bu,\bomega),p)\| \leq \frac{2}{\gamma}\|\f\|_{0,\Om}.
\end{equation}
\end{lemma}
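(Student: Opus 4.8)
The plan is to view the linearised problem \eqref{eq:linear-weak-formulation} as a perturbation of the saddle-point structure governed by $\cA$, and then to apply the global inf-sup estimate \eqref{eq:global-inf-sup-A} together with the Banach--Ne\v{c}as--Babu\v{s}ka theorem (equivalently Lemma~\ref{lemma:invertibility-of-T2} applied to the bilinear form $\cA + c^{\wh\bu}$). Concretely, define $\cB^{\wh\bu}(\bz,\bzeta,r;\bv,\btheta,q) := \cA(\bz,\bzeta,r;\bv,\btheta,q) + c^{\wh\bu}(\bz,\bzeta;\bv,\btheta)$ on $[\bV\times\bL^2(\Om)]\times\rL_0^2(\Om)$. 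Boundedness of $\cB^{\wh\bu}$ follows from the boundedness of $\cA$ (from \eqref{eq:bounded-a}--\eqref{eq:bounded-b}) and the bound \eqref{eq:bounded-c} on $c^{\wh\bu}$. The right-hand side $((\bv,\btheta),q)\mapsto F(\bv,\btheta)$ is bounded by \eqref{des:bound-F} with norm $\le\|\f\|_{0,\Om}$. So the core of the argument is to establish the inf-sup condition for $\cB^{\wh\bu}$, and from there existence, uniqueness and the stability bound \eqref{eq:dep} are automatic.

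The key step is the inf-sup estimate. First I would fix $((\bz,\bzeta),r)$ and pick the near-optimal test function $((\bv,\btheta),q)$ realising \eqref{eq:global-inf-sup-A} for $\cA$ (up to normalisation), giving $\cA(\bz,\bzeta,r;\bv,\btheta,q)\ge\gamma\|((\bz,\bzeta),r)\|$ with $\|((\bv,\btheta),q)\|\le 1$, say. Then I would add the perturbation and estimate
\[
\cB^{\wh\bu}(\bz,\bzeta,r;\bv,\btheta,q)\;\ge\;\gamma\,\|((\bz,\bzeta),r)\| \;-\;\big|c^{\wh\bu}(\bz,\bzeta;\bv,\btheta)\big|.
\]
By \eqref{eq:bounded-c} and $\|((\bv,\btheta),q)\|\le 1$, the last term is bounded by $C_\rS^2(\tfrac{1}{\sqrt\nu}+\rF)\|\wh\bu\|_{\bV}\,\|((\bz,\bzeta),r)\|$. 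Since $\wh\bu\in\bK$ we have $\|\wh\bu\|_{\bV}\le\frac{2}{\gamma}\|\f\|_{0,\Om}$, so the perturbation is controlled by $\frac{2}{\gamma}C_\rS^2(\tfrac{1}{\sqrt\nu}+\rF)\|\f\|_{0,\Om}\,\|((\bz,\bzeta),r)\|$. The smallness hypothesis \eqref{eq:assumption-J} gives $\frac{2}{\gamma}C_\rS^2(\tfrac{1}{\sqrt\nu}+\rF)\|\f\|_{0,\Om}\le\frac{\gamma}{2}$, whence $\cB^{\wh\bu}(\bz,\bzeta,r;\bv,\btheta,q)\ge\frac{\gamma}{2}\|((\bz,\bzeta),r)\|$; this yields the inf-sup constant $\gamma/2$ for $\cB^{\wh\bu}$ over the full product space. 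One must also check the transposed nondegeneracy condition (second inequality of \eqref{eq:inf-sup-lemma2}), which follows by the same perturbation argument since $c^{\wh\bu}$ acts only on the $(\bv,\btheta)$-argument through a bounded bilinear form and the perturbation of the adjoint operator is likewise dominated by $\gamma/2$.

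With the inf-sup constant $\gamma/2$ in hand, Lemma~\ref{lemma:invertibility-of-T2} delivers a unique $((\bu,\bomega),p)$ solving \eqref{eq:linear-weak-formulation}, together with the a priori bound $\|((\bu,\bomega),p)\|\le\frac{2}{\gamma}\|F\|\le\frac{2}{\gamma}\|\f\|_{0,\Om}$, which is exactly \eqref{eq:dep}. The main obstacle I anticipate is bookkeeping rather than conceptual: one must be careful that the perturbation bound uses only $\|((\bv,\btheta),q)\|$ on the test side (not a larger norm) and that the constant $\gamma$ as defined in \eqref{def:gamma} is consistent with the one used in the definition \eqref{eq:def-W} of $\bK$, so that the $\|\wh\bu\|_{\bV}\le\frac{2}{\gamma}\|\f\|_{0,\Om}$ substitution and the smallness condition \eqref{eq:assumption-J} combine to exactly halve the inf-sup constant. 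A secondary technical point is confirming the transposed condition, but this is routine once the forward inf-sup estimate is set up, since $c^{\wh\bu}(\cdot;\bv,\btheta)$ is linear and bounded in $(\bv,\btheta)$ and $\bL^2$-coercivity of the $\bzeta$-block plus the div-inf-sup \eqref{eq:inf-sup-b} make $\cA$ symmetric-enough in structure to transfer nondegeneracy to the adjoint.
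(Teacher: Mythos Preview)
Your proposal is correct and follows essentially the same route as the paper: define $\cB^{\wh\bu}=\cA+c^{\wh\bu}$, use the global inf-sup \eqref{eq:global-inf-sup-A} for $\cA$ together with the bound \eqref{eq:bounded-c}, $\wh\bu\in\bK$, and \eqref{eq:assumption-J} to obtain the inf-sup constant $\gamma/2$, then apply Lemma~\ref{lemma:invertibility-of-T2}. For the transposed condition the paper simply invokes the symmetry of $\cA$ (so \eqref{eq:global-inf-sup-A} holds with arguments swapped) and repeats the same perturbation estimate, which is exactly what your ``same perturbation argument'' amounts to.
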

\begin{proof} 
We proceed similarly as in the proof of \cite[Theorem 3.6]{CGO2021}.
In fact, given $\wh\bu \in \bK$, we begin by defining the bilinear form:
\begin{equation}\label{def:A-hat-u}
\cB^{\wh\bu}(\bz,\bzeta,r;\bv,\btheta,q) := 
\cA(\bz,\bzeta,r;\bv,\btheta,q)\,+\,c^{\wh\bu}(\bz,\bzeta;\bv,\theta)
\end{equation}
with $\cA(\bullet,\bullet)$ and $c^{\wh\bu}(\bullet,\bullet)$  the forms defined in \eqref{eq:def-A} and \eqref{eq:forms-c}.
Then, 
%taking $\cF((\bv,\btheta),q):= F(\bv,\btheta)$, 
problem \eqref{eq:linear-weak-formulation} can be rewritten equivalently as: 
Find $((\bu,\bomega),p)\in[\bV\times\bL^2(\Om)]\times \rL_0^2(\Om)$, such that
\begin{equation}\label{eq:variational-system-Problem1}
\cB^{\wh\bu}(\bu,\bomega,p;\bv,\btheta,q) = 
%\cF((\btheta,\bv),q)
F(\btheta,\bv)
\quad 
\forall\, ((\bv,\btheta),q)\in [\bV\times\bL^2(\Om)]\times \rL_0^2(\Om).
\end{equation}
Therefore, to prove the well-definiteness of $\crF$, in the sequel we equivalently prove that problem \eqref{eq:variational-system-Problem1}
is well-posed by means of Lemma~\ref{lemma:invertibility-of-T2}.
%the Banach--Ne\v cas--Babu\v ska theorem (see, for instance  \cite[Theorem 2.6]{ern04}).
First, given $((\bz,\bzeta),r), ((\wt\bv,\wt\btheta),\wt q)\in [\bV\times\bL^2(\Om)]\times \rL_0^2(\Om)$ with $((\wt\bv,\wt\btheta),\wt q)\neq \0$, from \eqref{eq:bounded-c} we observe that
\begin{align*}
\sup_{\0\neq((\bv,\btheta),q)  \in [\bV\times\bL^2(\Om)]\times \rL_0^2(\Om)}
&\frac{\cB^{\wh\bu}(\bz,\bzeta,r;\bv,\btheta,q)}{\|((\bv,\btheta),q)\|}
 \geq \frac{\big|\cA(\bz,\bzeta,r;\wt\bv,\wt\btheta,\wt q)\big|}{\|((\wt\bv,\wt\btheta),\wt q)\|}  -  \frac{\big|c^{\wh\bu}(\bz,\bzeta;\wt\bv,\wt\btheta)\big|}{\|((\wt\bv,\wt\btheta),\wt q)\|} \\ 
& \geq \frac{\big|\cA(\bz,\bzeta,r;\wt\bv,\wt\btheta,\wt q)\big|}{\|((\wt\bv,\wt\btheta),\wt q)\|}  - C_\rS^2 \Big(\dfrac{1}{\sqrt{\nu}}+\rF \Big)\|\wh\bu\|_{\bV}(\|\bz\|_{\bV} + \|\bzeta\|_{0,\Om}).
\end{align*}
Together with the global inf-sup condition \eqref{eq:global-inf-sup-A} and the fact that $((\wh\bv,\wh\btheta),\wt q)$ is arbitrary, this implies  
\begin{equation}\label{eq:auxiliar-eq-1}
\sup_{\0\neq((\bv,\btheta),q)  \in [\bV\times\bL^2(\Om)]\times \rL_0^2(\Om)}
\frac{\cB^{\wh\bu}(\bz,\bzeta,r;\bv,\btheta,q)}{\|((\bv,\btheta),q)\|}
\geq \left(\gamma - C_\rS^2 \Big(\dfrac{1}{\sqrt{\nu}}+\rF \Big)\|\wh\bu\|_{\bV}\right) \|((\bzeta,\bz),r)\|.
\end{equation}
Hence, from the definition of the set $\bK$ (cf. \eqref{eq:def-W}), and assumption \eqref{eq:assumption-J}, we easily get
\begin{equation}\label{eq:auxiliar-eq-2}
C_\rS^2 \Big(\dfrac{1}{\sqrt{\nu}}+\rF \Big)\|\wh\bu\|_{\bV} \leq \dfrac{2}{\gamma}C_\rS^2 \Big(\dfrac{1}{\sqrt{\nu}}+\rF \Big)\|\f\|_{0,\Omega}\leq \dfrac{\gamma}{2}
\end{equation}
and then, combining \eqref{eq:auxiliar-eq-1} and \eqref{eq:auxiliar-eq-2}, we obtain 
\begin{equation}\label{eq:BNB-1}
\sup_{\0\neq((\bv,\btheta),q)  \in [\bV\times\bL^2(\Om)]\times \rL_0^2(\Om)}
\frac{\cB^{\wh\bu}(\bz,\bzeta,r;\bv,\btheta,q)}{\|((\bv,\btheta),q)\|}
\geq \dfrac{\gamma}{2} \|((\bzeta,\bz),r)\|.
\end{equation}

On the other hand, for a given $((\bz,\bzeta),r)\in [\bV\times\bL^2(\Om)]\times \rL_0^2(\Om)$, we observe that
\begin{align*}
&\sup_{\0\neq((\bv,\btheta),q)  \in [\bV\times\bL^2(\Om)]\times \rL_0^2(\Om)}
\cB^{\wh\bu}(\bv,\btheta,q;\bz,\bzeta,r)\\  
& \qquad \geq  \sup_{\0\neq((\bv,\btheta),q)  \in [\bV\times\bL^2(\Om)]\times \rL_0^2(\Om)}
\frac{\cB^{\wh\bu}(\bv,\btheta,q;\bz,\bzeta,r)}{\|((\bv,\btheta),q)\|}\\ 
&\qquad =  \sup_{\0\neq((\bv,\btheta),q)  \in [\bV\times\bL^2(\Om)]\times \rL_0^2(\Om)}
\frac{\cA(\bv,\btheta,q;\bz,\bzeta,r) \,+\,c^{\wh\bu}(\bv,\btheta;\bz,\bzeta)}{\|((\bv,\btheta),q)\|}
\end{align*}
with the problem definition in the last step. Putting this 
 together with \eqref{eq:bounded-c} implies
\begin{align}\label{eq:auxiliar-eq-3}\nonumber 
&\sup_{\0\neq((\bv,\btheta),q)  \in [\bV\times\bL^2(\Om)]\times \rL_0^2(\Om)}
\cB^{\wh\bu}(\bv,\btheta,q;\bz,\bzeta,r)\\
&\qquad \geq \sup_{\0\neq((\bv,\btheta),q)  \in [\bV\times\bL^2(\Om)]\times \rL_0^2(\Om)}
\frac{\cA(\bv,\btheta,q;\bz,\bzeta,r)}{\|((\bv,\btheta),q)\|} - C_\rS^2 \Big(\dfrac{1}{\sqrt{\nu}}+\rF \Big)\|\wh\bu\|_{\bV}(\|\bz\|_{\bV} + \|\bzeta\|_{0,\Om}). 
\end{align}
Therefore, using the fact that $\cB^{\wh\bu}(\bullet,\bullet)$ is symmetric, from \eqref{eq:global-inf-sup-A} and \eqref{eq:auxiliar-eq-3} we obtain
\begin{equation*}
\sup_{\0\neq((\bv,\btheta),q)  \in [\bV\times\bL^2(\Om)]\times \rL_0^2(\Om)}
\cB^{\wh\bu}(\bv,\btheta,q;\bz,\bzeta,r)
\geq \left(\gamma - C_\rS^2 \Big(\dfrac{1}{\sqrt{\nu}}+\rF \Big)\|\wh\bu\|_{\bV}\right) \|((\bz,\bzeta),r)\|.
\end{equation*}
Using also \eqref{eq:auxiliar-eq-2}, yields
\begin{equation}\label{eq:BNB-2}
\sup_{\0\neq((\bv,\btheta),q)  \in [\bV\times\bL^2(\Om)]\times \rL_0^2(\Om)}
\cB^{\wh\bu}(\bv,\btheta,q;\bz,\bzeta,r)
\geq \frac{\gamma}{2}\,\|((\bz,\bzeta),r)\| > 0
\end{equation}
for all $((\bz,\bzeta),r)\in [\bV\times\bL^2(\Om)]\times \rL_0^2(\Om)$.

In this way, from \eqref{eq:BNB-1} and \eqref{eq:BNB-2} we obtain that $\cB^{\wh\bu}(\bullet,\bullet)$ satisfies the hypotheses of Lemma \ref{lemma:invertibility-of-T2}, 
%the Banach--Ne\v cas--Babu\v ska theorem \cite[Theorem 2.6]{ern04}, 
which allows us to conclude the unique solvability of 
%existence of a unique $((\bu,\bomega),p)\in[\bV\times\bL^2(\Om)]\times \rL_0^2(\Om)$ solution to 
\eqref{eq:linear-weak-formulation}, or equivalently, the existence of a unique $((\bu,\bomega),p)\in [\bV\times\bL^2(\Om)]\times \rL_0^2(\Om)$ such that $\crF(\bu)=\bu$.
Finally, from \eqref{eq:BNB-1}, with $((\bz,\bzeta),r)=((\bu,\bomega),p)$ and \eqref{eq:variational-system-Problem1}, we readily obtain that
\begin{equation}\label{eq:dep-b}
\|\bu\|_{\bV}\leq \|((\bu,\bomega),p)\|\leq \dfrac{2}{\gamma}\|\f\|_{0,\Om}
\end{equation}
implying that $\bu$ belongs to $\bK$ and concludes the proof.
\end{proof}
%%%%%%%%%%%%%%%%%%%%%%%%%%%%%%%%%%%%%%%%%%%%%%%%%%%%%%%%%%%%%%%%%%%%%%%%%%%%%%%%%%%%%%%%%%%%%%%%%%%%%%%%%%%%%%%%%%%%%%%%%%%
\subsection{Well-posedness of the continuous problem}
Now we provide the main result of this section, namely, the existence and uniqueness of solution of problem \eqref{eq:weak-formulation}. This result is established in the following theorem.

\begin{theorem}[Unique solvability]\label{th:unique-solution}
Let $\f \in \bL^2(\Omega)$ such that
\begin{equation}\label{eq:assumption-J-2}
\dfrac{4}{\gamma^2} C_\rS^2 \Big(\dfrac{1}{\sqrt{\nu}}+\rF \Big)\|\f\|_{0,\Omega} < 1
\end{equation}
with $\gamma$ the positive constant in \eqref{def:gamma}.
Then, $\crF$ (cf. \eqref{eq:def-J}) has a unique fixed-point $\bu\in\bK$. Equivalently, problem \eqref{eq:weak-formulation} has a unique solution $((\bu,\bomega),p)\in[\bK\times\bL^2(\Om)]\times \rL_0^2(\Om)$. %with $\bu\in\bK$.
Moreover, there holds
\begin{equation}\label{eq:stability}
\|((\bu,\bomega),p)\| \leq \dfrac{2}{\gamma}\|\f\|_{0,\Om}.
\end{equation}
\end{theorem}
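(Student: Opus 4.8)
The plan is to conclude via the Banach fixed-point theorem applied to $\crF$ on the set $\bK$ from \eqref{eq:def-W}. Since \eqref{eq:assumption-J-2} implies \eqref{eq:assumption-J}, Lemma~\ref{lem:well-def-J} already guarantees that $\crF$ is well-defined, maps $\bK$ into itself, and that each image $((\bu,\bomega),p)$ obeys \eqref{eq:dep}. As $\bK$ is a closed ball of the Hilbert space $\bV$, hence a complete metric space, the only missing ingredient is to show that $\crF$ is a contraction; the stability bound \eqref{eq:stability} for the fixed point is then immediate from \eqref{eq:dep-b}, and uniqueness of the remaining unknowns $(\bomega,p)$ follows because, once $\bu\in\bK$ is the (unique) fixed point, $(\bomega,p)$ is determined as the unique solution of \eqref{eq:linear-weak-formulation} with $\wh\bu=\bu$ (Lemma~\ref{lem:well-def-J}).

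To prove the contraction property, I would pick $\wh\bu_1,\wh\bu_2\in\bK$, write $\bu_j:=\crF(\wh\bu_j)$ with associated solutions $((\bu_j,\bomega_j),p_j)$ of \eqref{eq:variational-system-Problem1}, and subtract the two variational identities. Using $\cB^{\wh\bu}=\cA+c^{\wh\bu}$ (cf.\ \eqref{def:A-hat-u}) and the splitting
\[
c^{\wh\bu_1}(\bu_1,\bomega_1;\bv,\btheta)-c^{\wh\bu_2}(\bu_2,\bomega_2;\bv,\btheta)= c^{\wh\bu_1}(\bu_1-\bu_2,\bomega_1-\bomega_2;\bv,\btheta)+\big(c^{\wh\bu_1}-c^{\wh\bu_2}\big)(\bu_2,\bomega_2;\bv,\btheta),
\]
the differences satisfy
\[
\cB^{\wh\bu_1}\big(\bu_1-\bu_2,\bomega_1-\bomega_2,p_1-p_2;\bv,\btheta,q\big)= -\big(c^{\wh\bu_1}-c^{\wh\bu_2}\big)(\bu_2,\bomega_2;\bv,\btheta)
\]
for all $((\bv,\btheta),q)\in[\bV\times\bL^2(\Om)]\times\rL_0^2(\Om)$. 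Taking the supremum over the test functions and invoking the inf-sup bound \eqref{eq:BNB-1} for $\cB^{\wh\bu_1}$ on the left-hand side then bounds $\tfrac{\gamma}{2}\,\|((\bu_1-\bu_2,\bomega_1-\bomega_2),p_1-p_2)\|$ by the dual norm of the right-hand side.

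What remains is to estimate $(c^{\wh\bu_1}-c^{\wh\bu_2})(\bu_2,\bomega_2;\cdot)$: the Brinkman/convective part $-\tfrac{1}{\sqrt\nu}\int_\Omega\big((\wh\bu_1-\wh\bu_2)\times\bomega_2\big)\cdot\bv$ is controlled exactly as in the derivation of \eqref{eq:bounded-c}, and the one step needing care is the Forchheimer difference, where the pointwise reverse triangle inequality $\big||\wh\bu_1(\bx)|-|\wh\bu_2(\bx)|\big|\le|\wh\bu_1(\bx)-\wh\bu_2(\bx)|$ together with H\"older's inequality and \eqref{eq:Sobolev-inequality} yields $\big|\rF\int_\Omega(|\wh\bu_1|-|\wh\bu_2|)\,\bu_2\cdot\bv\big|\le \rF\,C_\rS^2\,\|\wh\bu_1-\wh\bu_2\|_{\bV}\,\|\bu_2\|_{\bV}\,\|\bv\|_{0,\Om}$. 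Collecting these bounds gives
\[
\big|(c^{\wh\bu_1}-c^{\wh\bu_2})(\bu_2,\bomega_2;\bv,\btheta)\big|\le C_\rS^2\Big(\tfrac{1}{\sqrt\nu}+\rF\Big)\,\|\wh\bu_1-\wh\bu_2\|_{\bV}\,\big(\|\bu_2\|_{\bV}+\|\bomega_2\|_{0,\Om}\big)\,\big(\|\bv\|_{\bV}+\|\btheta\|_{0,\Om}\big),
\]
and, bounding $\|\bu_2\|_{\bV}+\|\bomega_2\|_{0,\Om}\le\|((\bu_2,\bomega_2),p_2)\|\le\tfrac{2}{\gamma}\|\f\|_{0,\Om}$ via \eqref{eq:dep}, we arrive at
\[
\|\crF(\wh\bu_1)-\crF(\wh\bu_2)\|_{\bV}\le\|((\bu_1-\bu_2,\bomega_1-\bomega_2),p_1-p_2)\|\le \frac{4}{\gamma^2}C_\rS^2\Big(\tfrac{1}{\sqrt\nu}+\rF\Big)\|\f\|_{0,\Om}\,\|\wh\bu_1-\wh\bu_2\|_{\bV}.
\]
By \eqref{eq:assumption-J-2} the constant in front is strictly less than $1$, so $\crF$ is a contraction on $\bK$ and the Banach fixed-point theorem yields the unique fixed point. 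The main obstacle is precisely this Forchheimer difference estimate; once the reverse triangle inequality is invoked, the rest is a routine assembly of \eqref{eq:BNB-1}, \eqref{eq:bounded-c}, and \eqref{eq:dep}.
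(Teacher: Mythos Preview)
Your proof is correct and follows essentially the same route as the paper's: subtract the two linearised problems, rearrange to obtain $\cB^{\wh\bu_1}(\bu_1-\bu_2,\bomega_1-\bomega_2,p_1-p_2;\cdot)=c^{\wh\bu_2}(\bu_2,\bomega_2;\cdot)-c^{\wh\bu_1}(\bu_2,\bomega_2;\cdot)$, apply the inf-sup bound \eqref{eq:BNB-1}, control the Forchheimer difference via the pointwise reverse triangle inequality, and close with \eqref{eq:dep} and the Banach fixed-point theorem. The only additions on your side are the explicit remarks that $\bK$ is complete and that uniqueness of $(\bomega,p)$ is inherited from Lemma~\ref{lem:well-def-J}, which the paper leaves implicit.
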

\begin{proof}
Recall that 
%We begin by recalling from the previous analysis that assumption
\eqref{eq:assumption-J-2} ensures the well-definiteness of $\crF$. 
Now, let $\wh\bu_1$, $\wh\bu_2$, $\bu_1$, $\bu_2\in\bK$, be such that $\crF(\wh\bu_1)=\bu_1$ and $\crF(\wh\bu_2)=\bu_2$. 
According to \eqref{eq:def-J}, it follows that there exist unique $(\bomega_1, p_1)$, $(\bomega_2, p_2)$ $\in\bL^2(\Om)\times\rL_0^2(\Om)$, such that for all $((\bv,\btheta),q)\in[\bV\times\bL^2(\Om)]\times\rL_0^2(\Om)$, there hold
\begin{equation*}
\cB^{\wh\bu_1}(\bu_1,\bomega_1,p_1;\bv,\btheta,q) = 
F(\bv,\btheta)\, , \qan  \cB^{\wh\bu_2}(\bu_2,\bomega_2,p_2;\bv,\btheta,q) = 
F(\bv,\btheta)\,.
 \end{equation*}
Then, subtracting both equations, adding $\pm c^{\wh\bu_1}(\bu_2,\bomega_2;\bv,\theta)$, and recalling the definition of $\cB^{\wh\bu}$ in \eqref{def:A-hat-u}, we easily arrive at
\begin{equation*}
\cB^{\wh\bu_1}(\bu_1-\bu_2,\bomega_1-\bomega_2,p_1 -p_2;\bv,\btheta,q) = c^{\wh\bu_2}(\bu_2,\bomega_2;\bv,\theta) - c^{\wh\bu_1}(\bu_2,\bomega_2;\bv,\theta) \,.
\end{equation*}
Therefore, recalling that $\wh\bu_1\in\bK$ from the latter identity, together with \eqref{eq:BNB-1}, % the bound of $C_{\wh\bu}$ (see \eqref{eq:bounded-C})
the inequality $|\wh\bu_1|-|\wh\bu_2|\leq |\wh\bu_1-\wh\bu_2|$, and simple computations, we obtain
\begin{align*}
\frac{\gamma}{2}\,\|\bu_1-\bu_2\|_\bV
& \leq\, \sup_{\0\neq((\bv,\btheta),q)  \in [\bV\times\bL^2(\Om)]\times \rL_0^2(\Om) } 
\frac{\cB^{\wh\bu_1}(\bu_1-\bu_2,\bomega_1-\bomega_2,p_1 -p_2;\bv,\btheta,q)} {\|((\bv,\btheta),q)\|} \\ 
& =\, \sup_{\0\neq((\bv,\btheta),q)  \in [\bV\times\bL^2(\Om)]\times \rL_0^2(\Om) } 
\frac{ c^{\wh\bu_2}(\bu_2,\bomega_2;\bv,\theta) - 
c^{\wh\bu_1}(\bu_2,\bomega_2;\bv,\theta)}{\|((\bv,\btheta),q)\|} \\
& \leq\, C_\rS^2 \Big(\dfrac{1}{\sqrt{\nu}}+\rF \Big)\|\wh\bu_1-\wh\bu_2\|_{\bV}(\|\bu_2\|_{\bV} + \|\bomega_2\|_{0,\Om}). 
\end{align*}
Together with the fact that $(\bu_2,\bomega_2)$ satisfy \eqref{eq:dep}, this yields  
\begin{equation*}
\begin{array}{l}
\|\crF(\wh\bu_1) - \crF(\wh\bu_2)\|_{\bV} \,=\, \|\bu_1 -\bu_2\|_{\bV} \,\leq\, \dfrac{2}{\gamma} C_\rS^2 \Big(\dfrac{1}{\sqrt{\nu}}+\rF \Big) \dfrac{2}{\gamma}\|\f\|_{0,\Om} \|\wh\bu_1 - \wh\bu_2\|_{\bV} \,.
\end{array}
\end{equation*}
Combining the previous estimate with  \eqref{eq:assumption-J-2} and the Banach fixed-point theorem, readily implies that $\crF$ has a unique fixed-point in $\bK$, and so there exists a unique $((\bu,\bomega),p)\in[\bV\times\bL^2(\Om)]\times \rL_0^2(\Om)$ solution to \eqref{eq:weak-formulation}.
Finally, the estimate \eqref{eq:stability} is obtained analogously to \eqref{eq:dep-b}.
\end{proof}

Note that the formulation analysed above can also be defined in 2D. The vorticity is then the scalar $\omega = \sqrt{\nu} \mathrm{curl}\, \bu$,  the operator $\bcurl$ is to be replaced by $\mathrm{rot}$, and the weak convective term is now written as $-\frac{1}{\sqrt{\nu}}\int_\Omega \omega \bu \cdot \bv$. The space for vorticity is then $\rL^2(\Omega)$. At the discrete level these considerations also hold, but for sake of conciseness of the presentation we only discuss the 3D case. 
%%%%%%%%%%%%%%%%%%%%%%%%%%%%%%%%%%%%%%%%%%%%%%%%%%%%%%%%%%%%%%%%%%%%%%%%%%%%%%
\section{Galerkin scheme}\label{sec:discrete-problem}

In this section we introduce the Galerkin scheme associated with problem \eqref{eq:weak}, and show using Banach's fixed-point arguments that it admits a unique discrete solution. 

\subsection{Definition of the non-conforming method}
First, let us denote by $\{\cT_h\}$ a family of non-degenerate  simplicial meshes on $\Omega\subset\mathbb{R}^d$ (we simply assume that the domain is polytopal, so that no special treatment of the boundary is needed), and  denote by $\cE_h$ the set of all facets (edges in 2D) in the mesh, distinguishing between inner facets $\cE_h^{\mathrm{int}}$ and the set of facets lying on $\Gamma$, $\cE_h^\Gamma$. By $h_K$ we denote the diameter of the element $K$ and by $h_F$ we denote the length/area of the facet $F$ {bounded by the two cells $K^+$ and $K^-$}. As usual, by $h$ we denote the maximum of the diameters of elements in $\cT_h$. For a smooth vector, scalar, or tensor field $\zeta$ defined on $\cT_h$, $\zeta^{\pm}$ denote its traces taken from the interior of $K^+$ and $K^-$, respectively. We also
denote by $\bn^\pm$ the outward unit normal vector to $K^\pm$ (and for boundary faces it points outward of the domain $\Omega$). For any inner facet $F$ we define the {full},  normal, and tangential jumps of any element-wise defined vector function $\bv\in\bL^2(\Omega)$ across $F$ by 
\[ {[\![\bv]\!]_F:= \bv^+-\bv^-}, \quad [\![\bv\cdot\bn]\!]_F:= \bv^+\cdot\bn^++\bv^-\cdot\bn^-,\quad [\![\bv\times\bn]\!]_F:= \bv^+\times\bn^++\bv^-\times\bn^-\]
with $K^+$ and $K^-$ the two elements adjacent to $F$, and use the convention that $[\![\bv\cdot\bn]\!]_F:= \bv\cdot\bn$ and $[\![\bv\times\bn]\!]_F:= \bv\times\bn$ if $F\in \cE_h^\Gamma$. 
For all meshes we assume that they are sufficiently regular (there exists a uniform positive constant $\eta_1$ such that each element $K$ is star-shaped with respect to a ball of radius greater than $\eta_1 h_K$). It is also assumed that there exists $\eta_2>0$ such that for each element and every facet $F\in \partial K$, we have that $h_F\geq \eta_2 h_K$, see, e.g., \cite{ern04}). For $\ell \geq 0$, by $\mathbb{P}_\ell (K)$  we denote  the space of polynomials of total degree at most $\ell$ defined locally on the generic element $K \in \cT_h$. 

For the approximation of velocity and pressure we use the nonconforming Crouzeix--Raviart Stokes inf-sup stable element (see  
\cite{crouzeix1973conforming}) where  the velocity space consists of piecewise vector-valued $d$-linear polynomials on each dimension and continuous at the barycentre of the intra-element facets,   the discrete pressure consist of piecewise constant functions, and for sake of inf-sup stability we also need that the curl of the discrete velocity lives in the space of vorticity and so we take piecewise vector-valued constants. This gives {(now focusing on $d=2,3$ only)} 
\begin{align}\label{eq:fe-spaces}
\nonumber \bV_h&:=\{\bv_h \in \bL^2(\Omega): \bv_h \in \mathbb{P}_1(K)^d \ \forall K\in \cT_h, \quad J_F([\![\bv_h]\!]_F) = \boldsymbol{0} \ \forall F\in \cE^{\mathrm{int}}_h, \quad J_F(\bv_h|_F)=\boldsymbol{0} \ \forall F \in \cE_h^\Gamma\},\\
\bW_h&:=\{\btheta_h \in \bL^2(\Omega): \ \btheta_h|_K \in \mathbb{P}_0(K)^{d(d-1)/2} \quad \forall K \in \cT_h \},\\
%^{\mathrm{int}}\cup \cE_h^{\Gamma} \},\\
\rQ_h&:= \{ q_h \in \rL^2_0(\Omega): \ q_h|_K \in \mathbb{P}_0(K) \quad \forall K \in \cT_h\}.\nonumber
\end{align}
For any facet $F\in \cE_h$ with barycentre $C_F$, the nodal functional $J_F$ is defined by  
\[J_F(\bv) = \bv(C_F) \quad \text{or}\quad J_F(\bv) = \frac{1}{h_F}\int_F \bv\, \mathrm{d}s\]
and the degrees of freedom associated with facets on $\cE_h^\Gamma$ vanish for any $\bv_h\in \bV_h$. We recall the definition of the Crouzeix--Raviart interpolation $\CRinterp :\bV \to \bV_h$ as 
\[ \CRinterp \bv(C_F)= J_F(\bv) \qquad \forall F\in \cE_h.\]
 {We also recall that element-wise integration by parts on a given $K\in \cT_h$ readily gives that $\CRinterp$ preserves the averages of first derivatives. 

Next, defining the lowest-order Raviart--Thomas space 
\[\mathbf{RT}_0(\cT_h)  :=\{\bv_h\in \bH(\div\,,\Omega): \forall K \in \cT_h,\ \exists\,\bc_K\in \mathbb{R}^d,a_K\in \mathbb{R}: \ \bv_h|_K(\bx)= \bc_K + a_K\bx\}\]
we recall the Raviart--Thomas interpolation $\RTinterp :\bV \oplus \bV_h \to \mathbf{RT}_0(\cT_h)$ as 
\begin{equation}\label{eq:def-RT} \bn_F\cdot[\RTinterp \bv](C_F)= \frac{1}{h_F}\int_F \bv\cdot\bn_F \qquad \forall F\in \cE_h.\end{equation}
Note that even if $\mathbf{RT}_0(\cT_h)\not\subset\bV_h$ (since the tangential components of Raviart--Thomas functions are not necessarily continuous at each $C_F$), the interpolation is well-defined for $\bv\in \bV_h$ and we have that  (see, e.g., \cite{linke2017optimal}) 
\[\RTinterp \CRinterp \bv = \RTinterp \bv \qquad \forall \bv \in \bV.\]
For the subsequent analysis  
%{we will assume that $\kappa$ is piecewise constant, and shall write $\kappa_K$ whenever clear from the context}.  
we consider the following broken  norm for the space $\bV_h$ 
\begin{align}\label{eq:h-norm} \|\bv_h\|^2_h&:=
\sum_{K\in \cT_h}\bigl(\| \frac{1}{{\sqrt{\kappa}}}{\bv_h}\|^2_{0,K} +\nu \|\bcurl\bv_h\|_{0,K}^2 +\|\div\,\bv_h\|_{0,K}^2  \bigr) + \! \! \sum_{F\in \cE^{\mathrm{int}}_h} 
\frac{{1}}{h_F}\bigl(
\nu \|[\![\bv_h\times\bn]\!]_F\|^2_{0,F}+
\|[\![\bv_h\cdot\bn]\!]_F\|^2_{0,F}\bigr) 
\end{align}
as well as the piecewise $\bH^1(\cT_h)$-seminorm
\[\pwnorm{ \bv}^2:= \sum_{K \in \cT_h} |\bnabla \bv|^2_{1,K}. \]
We also have that the Raviart--Thomas interpolator is  stable on $\bV$ and also on $\bV_h$
\begin{equation}\label{eq:iterp-RT}
 {\| \RTinterp \bv \|_h  \leq C_{\mathrm{RT}}\|\bv\|_{h} \quad \forall\,\bv \in \bV \cup \bV_h}.\end{equation}

The following approximability bounds are known for Crouzeix--Raviart and Raviart--Thomas interpolants
\begin{subequations}\begin{align}\label{eq:aprox-CR}
\|\bv-\CRinterp \bv \|_h&\leq C_{\mathrm{CR}} h |\bv|_{2,\Omega} \qquad \forall\, \bv\in\bH_0^2(\Omega),
\\
\label{eq:aprox-RT-2}
  \|\bv- \RTinterp \bv\|_{0,\Omega}  &\leq C_F h%\sqrt{\sum_{K\in \cT_h}\,
  \|\bv\|_{h} \qquad \forall \bv\in\bV\cup\bV_h \end{align}
\end{subequations}
with \eqref{eq:aprox-CR} stated in \cite[Section 2.3]{brennecke2015optimal}, and the constant $C_{\mathrm{CR}}$ depends only on the mesh regularity. In addition, the constant $C_F$ only depends on the shape of the triangles/tetrahedra (maximum angle) but not on their size (see, e.g., \cite{carstensen12,john16}). 
Let $\mathcal{P}_h$ denote the $\rL^2$ projection operator, which satisfies the following approximation property (see \cite[Theorem 3.6]{gatica14}):
\begin{equation}\label{eq:L2-projection}
\|\mathcal{P}_h q - q\|_{0, \Omega}\leq C_{\rP}\, h |q|_{1,\Omega} \qquad \forall\, q\in\rH^1(\Omega).
\end{equation}

Since the method is nonconforming in the velocity space, for the discrete setting we will require the broken curl and broken divergence operators (associated with the non-diagonal part of the bilinear form $a_h(\bullet,\bullet)$ and the discrete bilinear form $b_h(\bullet,\bullet)$)  
\[ \bcurl_h : \bV \oplus \bV_h \to \bL^2(\Omega), \quad \div_h: \bV \oplus \bV_h \to \rL^2(\Omega)\]
in the following sense 
\[ (\bcurl_h\bv_h)|_K:= \bcurl(\bv_h|_K) \quad \text{and} \quad (\div_h\bv_h)|_K:= \div(\bv_h|_K) \qquad \forall K\in \cT_h. \]
{With these ingredients, we can define} element-wise variational forms. The forms that require modification are as follows
\begin{subequations}
\begin{align}\label{def:ah}
a_h(\bu_h,\bomega_h;\bv_h,\btheta_h) &: = 
\int_\Omega \frac{1}{\kappa}
 {\bu_h}\cdot  {\RTinterp \bv_h} + \sum_{F\in \cE^{\mathrm{int}}_h}%^{\mathrm{int}}}
\frac{\vartheta}{h_F}\int_F \bigl(
\nu [\![\bu_h\times\bn]\!]_F\cdot[\![\bv_h\times\bn]\!]_F +
[\![\bu_h\cdot\bn]\!]_F[\![\bv_h\cdot\bn]\!]_F \nonumber \\
&\quad + \sqrt{\nu}\int_\Omega \bomega_h\cdot\bcurl_h\bv_h + \sqrt{\nu}\int_\Omega \btheta_h\cdot\bcurl_h\bu_h -\int_\Omega \bomega_h\cdot\btheta_h,\nonumber\\ 
&= \sum_{K\in \cT_h}\int_K \frac{1}{{\kappa}} {\bu_h}\cdot {\RTinterp }\bv_h %\nonumber \\
%&\quad 
+\!\! \sum_{F\in \cE^{\mathrm{int}}_h}%^{\mathrm{int}}}
\frac{\vartheta}{h_F}\int_F \bigl(
\nu [\![\bu_h\times\bn]\!]_F\cdot[\![\bv_h\times\bn]\!]_F +
[\![\bu_h\cdot\bn]\!]_F[\![\bv_h\cdot\bn]\!]_F
 %\sqrt{\nu} \sum_{F\in \cE_h} \frac{\gamma}{h_F}\int_F 
\bigr)\nonumber\\
& \quad + \sqrt{\nu}\sum_{K\in \cT_h}\int_K \bomega_h \cdot\bcurl\bv_h
%b_h(\bv_h,\btheta_h):=
+\sqrt{\nu}\sum_{K\in \cT_h}\int_K \btheta_h \cdot\bcurl\bu_h -\int_\Omega\bomega_h\cdot\btheta_h,\\
b_h(\bv_h,\btheta_h;q_h)&:= - \int_\Omega q_h \div_h\bv_h = -\sum_{K\in \cT_h} \int_K q_h\div\,\bv_h ,\label{def:bh}\\
c_h^{\wh\bu_h}(\bu_h,\bomega_h;\bv_h,\btheta_h)& := 
-\frac{1}{\sqrt{\nu}} \int_\Omega (
%{\RTinterp 
 {\wh\bu_h}\times\bomega_h)\cdot \RTinterp \bv_h + \rF\int_\Omega |\wh\bu_h|\,
 {\bu_h}\cdot \RTinterp \bv_h, \label{def:ch} \\
F_h(\bv_h,\btheta_h)&:= \int_\Omega \f\cdot \RTinterp \bv_h\label{def:Fh}
\end{align}\end{subequations}
with $\vartheta>0$  a sufficiently large, user specified penalty parameter. The stabilisation in $a_h(\bullet,\bullet)$ uses normal and tangential jumps across inter-element boundaries, which are needed for controlling the consistency error and in general for the convergence of the scheme, as discussed in, e.g., \cite{hansbo2003discontinuous,knobloch2000korn} for elasticity equations (see also for example \cite{knobloch05,turek07}  for the case of nonconforming schemes on quadrilaterals).  We also provide numerical evidence in Section~\ref{sec:examples} that if $\vartheta=0$ then the method does not converge. Note also that, in \cite{brenner08} the jumps do not require a penalisation parameter since in that formulation the curl-curl and div-div terms are explicitly present in the continuous and discrete bilinear form (and the jump terms only contribute to maintain consistency). Also, note that the 2D Crouzeix--Raviart space used in \cite{brenner08} is also element-wise divergence-free, but the underlying continuous space only sets tangential components on boundary edges. Another variant in \cite{hansbo2010linear} imposes continuity only of the tangential components at the edges' midpoints. 

The interpolation of the test velocity in the right-hand side functional \eqref{def:Fh}  follows the definition proposed in \cite{linke2014role},  {but we stress that one could use any smoother operator such that the velocity error (in the broken norm \eqref{eq:h-norm}) is proportional to the corresponding best approximation error \cite{verfurth2019quasi}}. We proceed similarly for the convective and Forchheimer nonlinearities in  \eqref{def:ch}, {as well as for the Brinkman term.}

Having introduced the additional notations described above, the nonlinear discrete problem consists in finding $((\bu_h, \bomega_h),p_h)\in (\bV_h\times\bW_h)\times \rQ_h$, such that:
\begin{equation}\label{eq:weak-formulation-h}
\begin{array}{llll}
a_h(\bu_h,\bomega_h;\bv_h,\btheta_h)  &  +\quad b_h(\bv_h,\btheta_h;p_h) & +\quad c_h^{\bu_h}(\bu_h,\bomega_h;\bv_h,\btheta_h) & = F_h(\bv_h,\btheta_h), \\ [1ex]
 \qquad b_h(\bu_h,\bomega_h;q_h) & \, & \,& = 0
\end{array}
\end{equation}
for all $((\bv_h,\btheta_h),q_h)\in (\bV_h\times\bW_h)\times \rQ_h$. Note that the interpolated test discrete velocity on the right-hand side functional induce a variational crime approach that maps discretely divergence-free test functions to divergence-free functions in $\bH(\div,\Omega)$. This can also be regarded as a smoothing approach that permits to have a discrete load $F_h$ well defined for all continuous functionals on $\bV$. This setting has been used extensively in, e.g.,  \cite{ahmed21,verfurth2019quasi,linke2014role,linke2017optimal}, with the additional aim of achieving pressure robustness of the formulation.  We also recall that interpolated  test velocities are used in the convective nonlinearity.  Finally, using the Cauchy--Schwarz and H\"older inequalities, it is clear that the bilinear forms $a_h$ and $b_h$ are bounded
\begin{align*}
a_h(\bu_h,\bomega_h;\bv_h,\btheta_h)&\leq (\|\bu_h\|_{h} + \|\bomega_h\|_{0,\Omega})(\|\bv_h\|_{h} + \|\btheta_h\|_{0,\Omega}),\\
b_h(\bv_h,\btheta_h;q_h)& \leq \|q_h\|_{0,\Omega}(\|\bv_h\|_{h} + \|\btheta_h\|_{0,\Omega})
\end{align*}
%sbc{I would expect a constant in the first inequality, due to the fact that we don't have the penalty coefficient in the norm and the presence of the RT interpolant.}
as well as $c_h^{\wh\bu_h}$ and $F_h$:
\begin{subequations}\begin{align}\label{eq:bound-c-h}
|c_h^{\wh\bu_h}(\bu_h,\bomega_h;\bv_h,\btheta_h)|&\leq C_{c_h}\|\wh\bu_h\|_h(\|\bu_h\|_h+\|\bomega_h\|_{0,\Omega})(\|\bv_h\|_h+\|\btheta_h\|_{0,\Omega}),
\\
\label{eq:bound-F-h}
\big|F_h(\bv_h,\btheta_h)\big|&\leq  C_{F_h} \|\f\|_{0,\Omega}(\|\bv_h\|_h + \|\btheta_h\|_{0,\Omega})
\end{align}\end{subequations}
with $C_{c_h}>0$ and $C_{F_h}>0$ depending on the boundedness constant of the operator $\RTinterp $ denoted by $C_{\mathrm{RT}}$, as well as on the penalty parameter $\vartheta$. 
%%%%%%%%%%%%%%%%%%%%%%%%%%%%%%%%%%%%%%%
\subsection{Further properties of the discrete problem}
\paragraph{Discrete kernel properties.}
First, we denote the kernel of the bilinear form $b_h(\bullet,\bullet)$ as $\bV^0_h \times\bW_h$ (noting  that the discrete vorticity space does not play an active role), and  from \cite[Lemma~4.62]{john16} we can see that, since the broken divergence of an element-wise affine function is element-wise constant, we can readily choose as test function $q_h = \div_h\bv_h$, yielding the characterisation 
\begin{equation}\label{eq:kernel-B-h}
\bV^0_h:=\{\bv_h\in\bV_h:\quad \div_h\,\bv_h=0\}\,.
\end{equation}
Similarly, we stress that  
\begin{equation}\label{eq:curlvh}
\bcurl_h \bv_h \in \bW_h \qquad \forall \bv_h\in \bV_h.\end{equation}

The following lemma corresponds to the discrete version of Lemma~\ref{lemma:invert-a}.  {It depends on a mesh size smallness assumption, which can easily be avoided -- and the proof thus further simplified -- either if we have a discrete K\"orn-type inequality using the broken curl part of the discrete velocity norm (which is indeed valid trivially in the 2D case thanks to a discrete Poincar\'e inequality for Crouzeix--Raviart elements \cite{brenner08}), or if the first term in the definition of $a_h(\bullet,\bullet)$ is symmetric (for example, if it has also the Raviart--Thomas interpolation applied to the trial discrete function). We opt to keep the present form as it makes the a posteriori analysis more straightforward.} 
\begin{lemma}[Invertibility on the kernel]\label{lemma:invert-a-h}
The restriction of   $a_h(\bullet,\bullet)$ to the kernel of   $b_h(\bullet,\bullet)$ induces an invertible operator,  {provided that the mesh size $h$ is sufficiently small:}
\begin{equation}\label{assumption-h-coer}
 {    h \leq \frac{1}{2C_F \sqrt{\nu}}.}
\end{equation}
\end{lemma}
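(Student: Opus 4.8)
Since the kernel space $\bV^0_h\times\bW_h$ (cf.\ \eqref{eq:kernel-B-h}) is finite dimensional, the operator induced by $a_h(\bullet,\bullet)$ on it is a square linear map, so it suffices to establish a single inf-sup bound: for every $(\bz_h,\bzeta_h)\in\bV^0_h\times\bW_h$ I would exhibit a test pair $(\wh\bv_h,\wh\btheta_h)\in\bV^0_h\times\bW_h$ with
$a_h(\bz_h,\bzeta_h;\wh\bv_h,\wh\btheta_h)\gtrsim \|\bz_h\|^2_h+\|\bzeta_h\|^2_{0,\Omega}$ and $\|\wh\bv_h\|_h+\|\wh\btheta_h\|_{0,\Omega}\lesssim \|\bz_h\|_h+\|\bzeta_h\|_{0,\Omega}$; dividing yields the inf-sup constant, and injectivity then gives bijectivity (the transposed inf-sup condition is checked analogously, or is automatic by finite dimensionality). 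This mirrors the continuous proof of Lemma~\ref{lemma:invert-a}.

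The plan is to reuse the very same test functions as in the continuous case: take $\wh\bv_h:=2\bz_h$ and $\wh\btheta_h:=\sqrt{\nu}\,\bcurl_h\bz_h-\bzeta_h$. The choice is admissible because $\wh\bv_h\in\bV^0_h$ and, crucially, $\bcurl_h\bz_h\in\bW_h$ by \eqref{eq:curlvh}, so $\wh\btheta_h\in\bW_h$. Substituting this pair into $a_h(\bullet,\bullet)$, the two broken curl-coupling terms together with the vorticity mass term telescope exactly as on the continuous level and leave $\nu\|\bcurl_h\bz_h\|^2_{0,\Omega}+\|\bzeta_h\|^2_{0,\Omega}$; the jump stabilisation contributes $2\vartheta\sum_{F\in\cE^{\mathrm{int}}_h}\frac{1}{h_F}\bigl(\nu\|[\![\bz_h\times\bn]\!]_F\|^2_{0,F}+\|[\![\bz_h\cdot\bn]\!]_F\|^2_{0,F}\bigr)$, which is nonnegative. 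The only term that is not already a nonnegative square is the Brinkman term $\frac{2}{\kappa}\int_\Omega \bz_h\cdot\RTinterp\bz_h$; this discrepancy with Lemma~\ref{lemma:invert-a} is exactly the price of carrying the Raviart--Thomas interpolation on the discrete test velocity.

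Controlling this Brinkman term is the main obstacle, and the mesh smallness \eqref{assumption-h-coer} is precisely what is needed. I would split $\frac{2}{\kappa}\int_\Omega \bz_h\cdot\RTinterp\bz_h=\frac{2}{\kappa}\|\bz_h\|^2_{0,\Omega}-\frac{2}{\kappa}\int_\Omega \bz_h\cdot(\bz_h-\RTinterp\bz_h)$, estimate the remainder by Cauchy--Schwarz and the interpolation bound \eqref{eq:aprox-RT-2} (using that $\div_h\bz_h=0$ on the kernel, so $\|\bz_h\|_h$ contains only the $\kappa^{-1}$-weighted mass, the $\nu$-weighted broken curl and the jump seminorms), and then apply Young's inequality; under $h\le \frac{1}{2C_F\sqrt{\nu}}$ the resulting perturbation is absorbed into the positive contributions already produced, leaving $a_h(\bz_h,\bzeta_h;\wh\bv_h,\wh\btheta_h)\gtrsim \|\bz_h\|^2_h+\|\bzeta_h\|^2_{0,\Omega}$. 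Finally, $\|\wh\bv_h\|_h=2\|\bz_h\|_h$ and $\|\wh\btheta_h\|_{0,\Omega}\le \sqrt{\nu}\,\|\bcurl_h\bz_h\|_{0,\Omega}+\|\bzeta_h\|_{0,\Omega}\le \|\bz_h\|_h+\|\bzeta_h\|_{0,\Omega}$ close the argument. As noted in the remark preceding the statement, a discrete Korn-type inequality bounding $\pwnorm{\bullet}$ by the broken curl plus the jump seminorms --- available in 2D --- would make \eqref{assumption-h-coer} unnecessary; we use the weaker route here to keep $a_h(\bullet,\bullet)$ in the form convenient for the a posteriori analysis.
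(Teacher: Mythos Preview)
Your proposal is correct and follows essentially the same route as the paper: the same test pair $(\wh\bv_h,\wh\btheta_h)=(2\bz_h,\sqrt{\nu}\,\bcurl_h\bz_h-\bzeta_h)$, the same telescoping of the coupling terms, and the same treatment of the nonsymmetric Brinkman term by splitting $\frac{2}{\kappa}(\bz_h,\RTinterp\bz_h)=\frac{2}{\kappa}\|\bz_h\|_{0,\Omega}^2-\frac{2}{\kappa}(\bz_h,\bz_h-\RTinterp\bz_h)$ and invoking \eqref{eq:aprox-RT-2}. The only cosmetic difference is that the paper avoids Young's inequality by using directly $\|\bz_h\|_{0,\Omega}\le\sqrt{\kappa}\,\|\bz_h\|_h$ (from the definition of $\|\bullet\|_h$), which yields the clean lower bound $2\bigl(1-C_Fh/\sqrt{\kappa}\bigr)\|\bz_h\|_h^2+\|\bzeta_h\|_{0,\Omega}^2$ without any splitting parameter.
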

\begin{proof}
 {First, we note that for any $\bz_h \in \bV_h$, from the definition \eqref{eq:h-norm}, it readily holds 
\begin{equation}\label{eq:aux-m} \|\bz_h\|_{0,\Omega} \leq \sqrt{{\kappa_{\max}}} \|\bz_h\|_h.\end{equation}
Then, we can assert that 
\begin{align}\label{eq:aux-z}
\nonumber (\bz_h,\RTinterp\bz_h)_{0,\Omega} & = 
{(\bz_h, \RTinterp\bz_h -\bz_h)_{0,\Omega}}   + (\bz_h,\bz_h)_{0,\Omega}  \geq - \|\bz_h -\RTinterp\bz_h\|_{0,\Omega}\|\bz_h\|_{0,\Omega} + \|\bz_h\|^2_{0,\Omega} \\
& \geq -C_F h \sqrt{{\kappa_{\max}}} \|\bz_h\|^2_h + \|\bz_h\|^2_{0,\Omega}
%\nonumber (\bz_h,\RTinterp\bz_h)_{0,\Omega} & = (\bz_h -\RTinterp\bz_h,\RTinterp\bz_h)_{0,\Omega}   + (\RTinterp\bz_h,\RTinterp\bz_h)_{0,\Omega} \\
%\nonumber & \geq - \|\bz_h -\RTinterp\bz_h\|_{0,\Omega}\|\RTinterp\bz_h\|_{0,\Omega} + \|\RTinterp\bz_h\|^2_{0,\Omega} \\
%& \geq -C_FC_{\mathrm{RT}} h \|\bz_h\|^2_h + \|\RTinterp\bz_h\|^2_{0,\Omega},
\end{align}
having used the Cauchy--Schwarz inequality, as well as \eqref{eq:aprox-RT-2} 
and \eqref{eq:aux-m} 
in the last step.}

Next, and similarly to the continuous case, it is clear that $a_h(\bullet,\bullet)$ is bounded. In addition, for all $(\bz_h,\bzeta_h)\in\bV^0_h\times\bW_h$ (see \eqref{eq:kernel-B-h}), and owing to  \eqref{eq:curlvh}, we can define $\wh\bv_h:= 2\bz_h$ and $\wh\btheta_h:= \sqrt{\nu}\bcurl\bz_h - \bzeta_h$,  {and invoke \eqref{eq:aux-z}}, from which we  obtain  
\begin{align*}
&a_h(\bz_h,\bzeta_h;\wh\bv_h,\wh\btheta_h)\\
&\quad =  {(\frac{2}{\kappa} \bz_h,\RTinterp\bz_h)_{0,\Omega}} + \sum_{F\in \cE^{\mathrm{int}}_h}
\frac{2\vartheta}{h_F}\int_F \bigl(
\nu [\![\bu_h\times\bn]\!]^2_F +
[\![\bu_h\cdot\bn]\!]^2_F\bigr)
+ \nu \sum_{K\in \cT_h} \|\bcurl\bz_h\|^2_{0,K} + \|\bzeta_h\|^2_{0,\Omega}\\
&\quad  {\ \geq \frac{2}{{\kappa_{\max}}}  \|\bz_h\|^2_{0,\Omega} - \frac{2C_F h}{\sqrt{{\kappa_{\max}}}} \|\bz_h\|^2_h }+ \sum_{F\in \cE^{\mathrm{int}}_h}
\frac{2\vartheta}{h_F}\int_F \bigl(
\nu [\![\bu_h\times\bn]\!]^2_F +
[\![\bu_h\cdot\bn]\!]^2_F\bigr) 
+ \nu \sum_{K\in \cT_h} \|\bcurl\bz_h\|^2_{0,K} + \|\bzeta_h\|^2_{0,\Omega}\\
&\quad \geq  {2\biggl(1 - \frac{C_Fh}{\sqrt{{\kappa_{\max}}}}\biggr)}\|\bz_h\|^2_{h} + \|\bzeta_h\|^2_{0,\Omega}.
\end{align*}
Thus, using \eqref{assumption-h-coer} together with the fact that $\|\wh\bv_h\|_{h}=2\|\bz_h\|_{h}$ and $\|\wh\btheta_h\|_{0,\Omega}\leq (1+\sqrt{\nu})(\|\bz_h\|_{h} + \|\bzeta_h\|_{0,\Omega})$, we can conclude that
\begin{equation}\label{eq:inf-sup-a-h}
\sup_{\0\neq (\bv_h,\btheta_h)\in \bV^0_h\times\rQ}
\frac{a_h(\bz_h,\bzeta_h;\bv_h,\btheta_h)}{\|\bv_h\|_h + \|\btheta_h\|_{0,\Om}}\, \geq\, \frac{a_h(\bz_h,\bzeta_h;\wh\bv_h,\wh\btheta_h)}{\|\wh\bv_h\|_h + \|\wh\btheta_h\|_{0,\Om}} 
\geq \alpha_h\,( \|\bz_h\|_h + \|\bzeta_h\|_{0,\Om})
\end{equation}
for all $(\bz_h, \bzeta_h)\in \bV^0_h\times\rQ$, with $\alpha_h:=\frac{1}{2(3+\sqrt{\nu})}$. %$ - \dfrac{2C_FC_{\mathrm{RT}} h}{\kappa}$}. 
Thus, the result follows directly from %the application of 
Lemma~\ref{lemma:invertibility-of-T}.
\end{proof}

\paragraph{Discrete inf-sup conditions.}
The discrete inf-sup condition for the discrete divergence operator is satisfied for Crouzeix--Raviart elements \cite{crouzeix1973conforming}. This is recalled in the following result.
\begin{lemma}
    The pair $(\bV_h,\rQ_h)$ is inf-sup stable with constant $\beta_h>0$ independent of the mesh size
\begin{equation}\label{eq:inf-sup-b-h}
0< \beta_h:= \inf_{q_h\in \rQ_h\setminus\{0\}}\sup_{\bv_h\in\bV_h\setminus\{\boldsymbol{0}\}} \frac{b_h(\bv_h,\btheta_h;q_h)}{\|\bv_h\|_h\|q_h\|_{0,\Omega}}.       
\end{equation}
\end{lemma}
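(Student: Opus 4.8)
The plan is to establish the discrete inf-sup condition \eqref{eq:inf-sup-b-h} for the pair $(\bV_h,\rQ_h)$ by a Fortin-type argument, transferring the continuous inf-sup condition \eqref{eq:inf-sup-b} (which holds since $\bH_0^1(\Om)=\bV$) to the discrete level through the Crouzeix--Raviart interpolant $\CRinterp$. The starting point is that for any $q_h\in \rQ_h$, since $q_h\in\rL_0^2(\Om)\subset\rL_0^2(\Om)$, the continuous inf-sup condition gives a $\bv\in\bV=\bH_0^1(\Om)$ with $-\int_\Om q_h\,\div\bv = b(\bv,\bzero;q_h) \geq \beta \|q_h\|_{0,\Om}\|\bv\|_{\bV}$ (after rescaling). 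I would then take $\bv_h:=\CRinterp\bv\in\bV_h$ as the Fortin operator image.

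The key steps, in order, are as follows. First I would recall the crucial commuting/averaging property stated in the excerpt: $\CRinterp$ preserves the averages of first derivatives, so in particular $\div_h(\CRinterp\bv)|_K = \frac{1}{|K|}\int_K \div\bv$ on each element $K\in\cT_h$. Since $q_h$ is piecewise constant, this yields $b_h(\CRinterp\bv,\btheta_h;q_h) = -\sum_K \int_K q_h\,\div_h(\CRinterp\bv) = -\sum_K q_h|_K \int_K \div\bv = -\int_\Om q_h\,\div\bv = b(\bv,\bzero;q_h)$. Second, I would invoke the boundedness of $\CRinterp$ in the broken norm, i.e. $\|\CRinterp\bv\|_h \lesssim \|\bv\|_{1,\Om} \simeq \|\bv\|_{\bV}$ (using $\bH_0^1(\Om)=\bV$ with equivalent norms; note the broken norm controls only piecewise $\bH^1$-type quantities plus jump terms, and the jump terms of $\CRinterp\bv$ vanish at barycentres but must be bounded via a trace/scaling estimate against $|\bv|_{1,K}$ on adjacent elements). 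Combining, $b_h(\CRinterp\bv,\btheta_h;q_h) \geq \beta\|q_h\|_{0,\Om}\|\bv\|_{\bV} \geq \beta C^{-1}\|q_h\|_{0,\Om}\|\CRinterp\bv\|_h$, which gives \eqref{eq:inf-sup-b-h} with $\beta_h := \beta/C$ independent of $h$ since all constants depend only on shape-regularity.

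The main obstacle I anticipate is controlling the jump-penalisation contributions in $\|\cdot\|_h$ for $\CRinterp\bv$: while $\CRinterp\bv$ is continuous at facet barycentres (so $J_F([\![\CRinterp\bv]\!]_F)=\bzero$), the full $\rL^2(F)$ norm of the normal and tangential jumps $[\![\CRinterp\bv\cdot\bn]\!]_F$ and $[\![\CRinterp\bv\times\bn]\!]_F$ is nonzero and must be bounded by $h_F^{1/2}|\bv|_{1,\omega_F}$ over the facet patch $\omega_F$, via the standard argument that $\bv\in\bH^1$ has a well-defined trace with zero jump across $F$, so $[\![\CRinterp\bv]\!]_F = [\![\CRinterp\bv - \bv]\!]_F$, followed by a scaled trace inequality and the $\bH^1$-approximation property of $\CRinterp$; summing over facets and using the finite-overlap of patches recovers $\pwnorm{\bv}^2 \lesssim |\bv|_{1,\Om}^2$. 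Everything else is routine; alternatively, since this is a known result for Crouzeix--Raviart elements, one may simply cite \cite{crouzeix1973conforming} together with the norm equivalence and the fact that the extra jump terms in $\|\cdot\|_h$ only strengthen (not weaken) the bound on the test function in a harmless $h$-independent way.
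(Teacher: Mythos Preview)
The paper does not actually prove this lemma; it simply states it as a recalled fact and cites \cite{crouzeix1973conforming} (see the sentence immediately preceding the lemma: ``The discrete inf-sup condition for the discrete divergence operator is satisfied for Crouzeix--Raviart elements \cite{crouzeix1973conforming}. This is recalled in the following result.''). So there is no paper proof to compare against in detail.

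Your Fortin-type argument is the standard way to establish this and is correct. You have also correctly identified the only non-routine point: the paper's discrete norm $\|\cdot\|_h$ is not the usual piecewise $\bH^1$ seminorm but contains the $\kappa^{-1}$-weighted $L^2$ term, separate curl/div pieces, and the facet jump penalties, so the classical Crouzeix--Raviart inf-sup statement does not apply verbatim and one must verify $\|\CRinterp\bv\|_h \lesssim \|\bv\|_{1,\Omega}$. Your sketch for the jump terms (write $[\![\CRinterp\bv]\!]_F = [\![\CRinterp\bv-\bv]\!]_F$, then trace inequality plus the local $\bH^1$-stability/approximation of $\CRinterp$) is exactly right, and the volume terms follow from the piecewise $\bH^1$-stability of $\CRinterp$. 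The resulting $\beta_h$ will depend on $\kappa$ and $\nu$ through the norm but is indeed mesh-independent, which is all the lemma claims. Your closing remark---that one may alternatively cite \cite{crouzeix1973conforming} together with the observation that the extra jump and weight terms only modify the test-function norm by an $h$-independent factor---is precisely the paper's own stance.
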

In addition, we have the following properties, shown in \cite[Lemma 3.1]{linke2017optimal}.
\begin{lemma}
 For all $\bv_h\in \bV_h$ there holds 
 \begin{subequations}
 \begin{align}
     b_h(\bv_h,\bullet;q) &= b\,(\RTinterp \bv_h,\bullet;q) \qquad \forall q\in  \rL^2(\Omega),\\
    b\,(\RTinterp \bv_h,\bullet;q)& = \int_\Omega \nabla q \cdot (\RTinterp \bv_h) \qquad \forall q\in \rH^1(\Omega).
 \end{align}\end{subequations}
\end{lemma}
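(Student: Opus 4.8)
The plan is to prove the two identities separately, using only the defining degrees of freedom \eqref{eq:def-RT} of the Raviart--Thomas interpolant together with elementary integration by parts, so that no external commuting-diagram result need be invoked. Throughout I would rely on the elementary fact that the average over a simplex facet $F$ of an affine field equals its value at the barycentre $C_F$, and that an element of $\mathbf{RT}_0(\cT_h)$ has constant divergence on each $K\in\cT_h$ and constant normal trace on each facet.

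For the first identity I would start from the definitions: $b_h(\bv_h,\bullet;q)=-\sum_{K\in\cT_h}\int_K q\,\div\,\bv_h$ by \eqref{def:bh}, and $b(\RTinterp\bv_h,\bullet;q)=-\int_\Omega q\,\div(\RTinterp\bv_h)$ by \eqref{eq:forms-b}, the latter being meaningful since $\RTinterp\bv_h\in\bH(\div,\Omega)$ and $b$ only involves the divergence of its first argument. It therefore suffices to show the elementwise identity $(\div(\RTinterp\bv_h))|_K=(\div_h\bv_h)|_K$. Since the normal trace of $\RTinterp\bv_h$ on any facet $F$ is constant and equals $\tfrac{1}{h_F}\int_F\bv_h\cdot\bn_F$ by \eqref{eq:def-RT}, we get $\int_F\RTinterp\bv_h\cdot\bn_F=\int_F\bv_h\cdot\bn_F$ for every $F\subset\partial K$; element-wise integration by parts (valid since $\bv_h|_K$ is a polynomial) then gives
\[
 |K|\,(\div(\RTinterp\bv_h))|_K=\int_{\partial K}\RTinterp\bv_h\cdot\bn=\int_{\partial K}\bv_h\cdot\bn=\int_K\div\,\bv_h ,
\]
and as $\div\,\bv_h|_K$ is already constant for $\bv_h\in\bV_h\subset\mathbb{P}_1(K)^d$, the claim follows. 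Multiplying by $q\in\rL^2(\Omega)$ and summing over $K$ yields the first line.

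For the second identity I would first verify that $\RTinterp\bv_h\in\bH_0(\div,\Omega)$: the boundary degrees of freedom of $\bV_h$ vanish, i.e. $J_F(\bv_h|_F)=\boldsymbol{0}$ for $F\in\cE_h^\Gamma$, so (again using that the facet average of an affine field is its barycentric value) the normal flux $\int_F\bv_h\cdot\bn_F$ vanishes on every boundary facet, hence so does the constant normal trace of $\RTinterp\bv_h$ on $\Gamma$. Then, for $q\in\rH^1(\Omega)$, one global integration by parts gives
\[
 b(\RTinterp\bv_h,\bullet;q)=-\int_\Omega q\,\div(\RTinterp\bv_h)=\int_\Omega\nabla q\cdot\RTinterp\bv_h-\langle q,\RTinterp\bv_h\cdot\bn\rangle_\Gamma=\int_\Omega\nabla q\cdot\RTinterp\bv_h ,
\]
the boundary term dropping out by the previous step.

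I do not anticipate a genuine obstacle: the two points requiring a little care are the justification of element-wise integration by parts (legitimate because $\bv_h$ is a polynomial on each $K$) and checking that both admissible choices of the nodal functional $J_F$ — the barycentric value $\bv_h(C_F)$ or the facet average $\tfrac{1}{h_F}\int_F\bv_h$ — annihilate the boundary normal flux, which is immediate for affine fields. As a by-product, the computation above establishes the commuting relation $\div(\RTinterp\bv_h)=\div_h\bv_h$ on $\bV_h$ (the standard commuting-diagram property, in which the element-wise $\rL^2$-projection onto constants reduces to the identity because $\div_h\bv_h$ is already piecewise constant), so the argument is entirely self-contained.
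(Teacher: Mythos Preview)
Your argument is correct. The paper does not actually supply its own proof of this lemma; it simply cites \cite[Lemma~3.1]{linke2017optimal}. Your self-contained derivation via the commuting property $\div(\RTinterp\bv_h)|_K=\div_h\bv_h|_K$ (obtained from the facet-moment definition \eqref{eq:def-RT} and the divergence theorem) and the observation that $\RTinterp\bv_h\in\bH_0(\div,\Omega)$ is precisely the standard route taken in that reference, so there is nothing substantively different to compare.
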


\subsection{Discrete fixed-point arguments}
Here we proceed similarly as in the continuous setting. To begin with, we define the bilinear form $\cA_h: \big([\bV_h\times\bW_h]\times \rQ_h\big)\times\big([\bV_h\times\bW_h]\times \rQ_h)\to\bbR$ as
\begin{equation}\label{eq:def-A-h}
\cA_h(\bz_h,\bzeta_h,r_h;\bv_h,\btheta_h,q_h):=a_h(\bu_h,\bomega_h;\bv_h,\btheta_h)\, + \,b_h(\bv_h,\btheta_h;p_h) \, +\, b_h(\bu_h,\bomega_h;q_h).\, 
\end{equation}
It is easy to see that $\cA_h$ is bounded (since $a_h$ and $b_h$ are), and furthermore, using \eqref{eq:inf-sup-a-h}, \eqref{eq:inf-sup-b-h}, and \cite[Proposition 2.36]{ern04}, we have that $\cA_h$ satisfies the following inf-sup condition
\begin{equation}\label{eq:global-inf-sup-A-h}
\sup_{\0\neq((\bv_h,\btheta_h),q_h)\in [\bV_h\times\bW_h]\times \rQ_h } \frac{ \cA_h(\bz_h,\bzeta_h,r_h;\bv_h,\btheta_h,q_h)}{\|((\bv_h,\btheta_h),q_h)\|_h} \geq \wt\gamma\,\big\|((\bz_h,\bzeta_h),r_h)\big\|_h
\end{equation}
for all $((\bz_h,\bzeta_h),r_h)\in [\bV_h\times\bW_h]\times \rQ_h$, where $\|((\bz_h,\bzeta_h),r_h)\|_h:=\|\bz_h\|_h + \|\bzeta_h\|_{0,\Om} + \|r_h\|_{0,\Om}$, and $\wt\gamma>0$ is the discrete version of $\gamma$ (cf. \eqref{def:gamma}).

Let us introduce the following set
\begin{equation}\label{eq:def-W-h}
\bK_h\,:=\,\Big\{ \wh\bu \in \bV_h :\quad  \|\wh\bu_h\|_h\leq \frac{2}{\wt\gamma}C_{F_h}\|\f\|_{0,\Om}   \Big\}
\end{equation}
with $\wt\gamma$ the global inf-sup constant defined in \eqref{eq:global-inf-sup-A-h}. 
Then, and again analogously to the continuous case, we define the following fixed-point operator
\begin{equation}\label{eq:def-J-h}
\crF_h:\bK_h\to \bK_h, \quad \wh\bu_h\to\crF_h(\wh\bu_h)=\bu_h,
\end{equation}
where, given $\wh\bu_h\in\bK_h$, $\bu_h$ is the first component of %$(\bu_h,\bomega_h)$, where 
$((\bu_h,\bomega_h),p_h)\in[\bV_h\times\bW_h]\times \rQ_h$, the solution of the linearised version of problem \eqref{eq:weak-formulation-h}: Find $((\bu_h,\bomega_h),p_h)\in[\bV_h\times\bW_h]\times \rQ_h$ such that
\begin{equation}\label{eq:linear-weak-formulation-h}
\begin{array}{llll}
a_h(\bu_h,\bomega_h;\bv_h,\btheta_h)  &  +\quad b_h(\bv_h,\btheta_h;p_h) & +\quad c_h^{\wh\bu_h}(\bu_h,\bomega_h;\bv_h,\btheta_h) & = F_h( \RTinterp \bv_h,\btheta_h), \\ [1ex]
 \qquad b_h(\bu_h,\bomega_h;q_h) & \, & \,& = 0,
\end{array}
\end{equation}
for all $((\bv_h,\btheta_h),q_h)\in [\bV_h\times\bW_h]\times \rQ_h$.

It is clear that $((\bu_h,\bomega_h),p_h)$ is a solution to \eqref{eq:weak-formulation-h} if and only if $\bu_h$ satisfies $\crF_h(\bu_h) = \bu_h$, and consequently, the well-posedness of \eqref{eq:weak-formulation-h} is equivalent to the unique solvability of the fixed-point problem: Find $\bu_h\in \bK_h$ such that
\begin{equation}\label{eq:fixed-point-problem-h}
\crF_h(\bu_h) = \bu_h.
\end{equation}

In what follows we focus on \eqref{eq:fixed-point-problem-h}.
We start by establishing that $\crF_h$ is well-defined.

\begin{lemma}[Wellposedness of the discrete linearised problem]\label{lem:well-def-J-h}
Let	$\wh\bu_h \in \bK_h$ and assume that 
\begin{equation}\label{eq:assumption-J-h}
\dfrac{4}{\wt\gamma^2}C_{c_h} C_{F_h}\|\f\|_{0,\Omega} \leq 1
\end{equation}
with the positive constant  $\wt\gamma$ in \eqref{eq:global-inf-sup-A-h}.
Then, there exists a unique $((\bu_h,\bomega_h),p_h)\in[\bV_h\times\bW_h]\times \rQ_h$ solution to \eqref{eq:linear-weak-formulation-h}.
In addition, there holds
\begin{equation}\label{eq:dep-h}
\|((\bu_h,\bomega_h),p_h)\|_h \leq \dfrac{2}{\wh\gamma}C_{F_h}\|\f\|_{0,\Omega}.
\end{equation}
\end{lemma}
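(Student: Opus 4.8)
The strategy mirrors exactly the continuous argument of Lemma~\ref{lem:well-def-J}, replacing each continuous ingredient by its discrete counterpart established in Section~\ref{sec:discrete-problem}. First I would fix $\wh\bu_h\in\bK_h$ and rewrite the linearised problem \eqref{eq:linear-weak-formulation-h} as a single variational equation for the bilinear form
\[
\cB_h^{\wh\bu_h}(\bz_h,\bzeta_h,r_h;\bv_h,\btheta_h,q_h):=\cA_h(\bz_h,\bzeta_h,r_h;\bv_h,\btheta_h,q_h)+c_h^{\wh\bu_h}(\bz_h,\bzeta_h;\bv_h,\btheta_h)
\]
posed on the product space $[\bV_h\times\bW_h]\times\rQ_h$, seeking $((\bu_h,\bomega_h),p_h)$ with $\cB_h^{\wh\bu_h}(\bu_h,\bomega_h,p_h;\cdot)=F_h(\cdot)$. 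Since this is a linear problem on a finite-dimensional space, it suffices to verify the two hypotheses of Lemma~\ref{lemma:invertibility-of-T2} (equivalently, by finite dimensionality, a single inf-sup bound plus injectivity). The plan is to obtain both bounds by treating $c_h^{\wh\bu_h}$ as a small perturbation of $\cA_h$, controlled by the boundedness estimate \eqref{eq:bound-c-h} and the smallness of $\|\wh\bu_h\|_h$ enforced by membership in $\bK_h$.

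For the first condition, I would start from \eqref{eq:bound-c-h} to get, for any $((\bz_h,\bzeta_h),r_h)$ and any test $((\wt\bv_h,\wt\btheta_h),\wt q_h)\ne\0$,
\[
\sup_{\0\neq((\bv_h,\btheta_h),q_h)}\frac{\cB_h^{\wh\bu_h}(\bz_h,\bzeta_h,r_h;\bv_h,\btheta_h,q_h)}{\|((\bv_h,\btheta_h),q_h)\|_h}
\ \geq\ \frac{|\cA_h(\bz_h,\bzeta_h,r_h;\wt\bv_h,\wt\btheta_h,\wt q_h)|}{\|((\wt\bv_h,\wt\btheta_h),\wt q_h)\|_h}\ -\ C_{c_h}\|\wh\bu_h\|_h\,(\|\bz_h\|_h+\|\bzeta_h\|_{0,\Om}).
\]
Taking the supremum over the test triple and invoking the global discrete inf-sup condition \eqref{eq:global-inf-sup-A-h} yields the lower bound $(\wt\gamma-C_{c_h}\|\wh\bu_h\|_h)\,\|((\bz_h,\bzeta_h),r_h)\|_h$. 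Using $\wh\bu_h\in\bK_h$ (cf.~\eqref{eq:def-W-h}) together with hypothesis \eqref{eq:assumption-J-h} gives $C_{c_h}\|\wh\bu_h\|_h\leq \tfrac{2}{\wt\gamma}C_{c_h}C_{F_h}\|\f\|_{0,\Om}\leq\tfrac{\wt\gamma}{2}$, hence the inf-sup bound with constant $\wt\gamma/2$. For the injectivity/"transposed" condition I would use the same perturbation argument on $\cB_h^{\wh\bu_h}(\bv_h,\btheta_h,q_h;\bz_h,\bzeta_h,r_h)$; here one needs the (near-)symmetry of $\cA_h$ in the sense used in the continuous proof. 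A subtle point is that $a_h$ is \emph{not} symmetric because of the Raviart--Thomas interpolant in its first (Brinkman) term, so rather than literally transposing I would run the supremum-over-trial argument directly, using \eqref{eq:global-inf-sup-A-h} applied to the transposed inf-sup (which holds with the same constant on a finite-dimensional space, or is re-derived analogously to \eqref{eq:inf-sup-a-h}), to conclude $\sup_{((\bv_h,\btheta_h),q_h)\ne\0}\cB_h^{\wh\bu_h}(\bv_h,\btheta_h,q_h;\bz_h,\bzeta_h,r_h)>0$ for every nonzero triple. With both hypotheses of Lemma~\ref{lemma:invertibility-of-T2} in hand, existence and uniqueness of $((\bu_h,\bomega_h),p_h)$ follow, and testing the equation against the solution itself together with the bound \eqref{eq:bound-F-h} on $F_h$ gives $\tfrac{\wt\gamma}{2}\|((\bu_h,\bomega_h),p_h)\|_h\leq\|F_h\|\leq C_{F_h}\|\f\|_{0,\Om}$, i.e.~the stated a priori estimate \eqref{eq:dep-h} (modulo the evident typo $\wh\gamma\leftrightarrow\wt\gamma$ there); in particular $\|\bu_h\|_h\leq\tfrac{2}{\wt\gamma}C_{F_h}\|\f\|_{0,\Om}$, so $\bu_h\in\bK_h$ and $\crF_h$ is well-defined.

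\textbf{Main obstacle.} The only genuinely non-routine point, compared with the continuous Lemma~\ref{lem:well-def-J}, is handling the lack of symmetry of $a_h$ when establishing the second (injectivity) hypothesis of Lemma~\ref{lemma:invertibility-of-T2}: the continuous proof explicitly appeals to symmetry of $\cB^{\wh\bu}$, whereas here I expect to need either a separate transposed discrete inf-sup estimate (re-running the argument of Lemma~\ref{lemma:invert-a-h} with the roles of trial and test swapped, which works because the perturbing Brinkman term $\tfrac{1}{\kappa}(\bz_h,\RTinterp\bv_h)_{0,\Om}$ can still be bounded below after the same splitting and smallness assumption \eqref{assumption-h-coer} on $h$), or simply to note that on the finite-dimensional space injectivity of the operator is equivalent to the already-proven inf-sup bound, so that Lemma~\ref{lemma:invertibility-of-T2} can be replaced by a dimension-counting argument. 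Everything else is a verbatim transcription of the continuous proof with $\gamma\mapsto\wt\gamma$, $\bV\mapsto\bV_h$, $\bL^2(\Om)\mapsto\bW_h$, $\rL^2_0(\Om)\mapsto\rQ_h$, $C_\rS^2(\tfrac{1}{\sqrt\nu}+\rF)\mapsto C_{c_h}$, and $\|\f\|_{0,\Om}\mapsto C_{F_h}\|\f\|_{0,\Om}$.
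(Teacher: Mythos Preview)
Your proposal is correct and follows essentially the same route as the paper: define $\cB_h^{\wh\bu_h}$, treat $c_h^{\wh\bu_h}$ as a small perturbation of $\cA_h$ using \eqref{eq:bound-c-h}, \eqref{eq:global-inf-sup-A-h}, $\wh\bu_h\in\bK_h$ and \eqref{eq:assumption-J-h} to get the inf-sup constant $\wt\gamma/2$, and then invoke Lemma~\ref{lemma:invertibility-of-T2}. For the second hypothesis the paper goes directly with your second option --- finite dimensionality makes the single inf-sup bound equivalent to injectivity (hence surjectivity) --- so your discussion of re-running a transposed inf-sup for the non-symmetric $a_h$ is unnecessary, though your diagnosis of why the continuous symmetry argument does not transcribe verbatim is accurate.
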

\begin{proof}
Given $\wh\bu_h\in\bK_h$, we proceed as in the proof of Lemma~\ref{lem:well-def-J} and define the bilinear form
\begin{equation}\label{def:A-hat-u-h}
\cB_h^{\wh\bu_h}(\bz_h,\bzeta_h,r_h;\bv_h,\btheta_h,q_h) := 
\cA_h(\bz_h,\bzeta_h,r_h;\bv_h,\btheta_h,q_h)\,+\,c_h^{\wh\bu_h}(\bz_h,\bzeta_h;\bv_h,\theta_h).
\end{equation}
Using \eqref{eq:bound-c-h}, \eqref{eq:global-inf-sup-A-h}, \eqref{eq:assumption-J-h} and \cite[Proposition 2.36]{ern04} we obtain the inf-sup condition
\begin{equation}\label{eq:BNB-1-h}
\sup_{\0\neq((\bv_h,\btheta_h),q_h)  \in [\bV_h\times\bW_h]\times \rQ_h}
\frac{\cB_h^{\wh\bu_h}(\bz_h,\bzeta_h,r_h;\bv_h,\btheta_h,q_h)}{\|((\bv_h,\btheta_h),q_h)\|_h}
\geq \dfrac{\wt\gamma}{2} \|((\bzeta_h,\bz_h),r_h)\|_h
\end{equation}
for all $((\bv_h,\btheta_h),q_h)\in[\bV_h\times\bW_h]\times \rQ_h$.
Therefore, owing to the fact that for finite dimensional linear problems  surjectivity and injectivity are equivalent, from \eqref{eq:BNB-1-h} and 
Lemma~\ref{lemma:invertibility-of-T2} 
%the Banach--Ne\v cas--Babu\v ska theorem 
we obtain that there exists a unique $((\bu_h,\bomega_h),p_h)\in[\bV_h\times\bW_h]\times \rQ_h$ satisfying \eqref{eq:linear-weak-formulation-h} with $\bu_h\in\bK_h$. %This concludes the proof.    
\end{proof}

The following theorem establishes the well-posedness of the nonlinear discrete problem \eqref{eq:weak-formulation-h}.
\begin{theorem}[Unique solvability of the discrete nonlinear problem]\label{th:unique-solution-h}
Let $\f \in \bL^2(\Omega)$ such that
\begin{equation}\label{eq:assumption-J-h-2}
\dfrac{4}{\wt\gamma^2}C_{c_h} C_{F_h}\|\f\|_{0,\Omega} < 1
\end{equation}
with $\wt\gamma$ the positive constant in \eqref{eq:global-inf-sup-A-h}. Then, $\crF_h$ (cf. \eqref{eq:def-J-h}) has a unique fixed-point $\bu_h\in\bK_h$. Equivalently, problem \eqref{eq:weak-formulation-h} has a unique solution $((\bu_h,\bomega_h),p_h)\in[\bK_h\times\bW_h]\times \rQ_h$. %with $\bu\in\bK$.
This discrete solution satisfies 
\begin{equation}\label{eq:stability-h}
\|((\bu_h,\bomega_h),p_h)\|_h \leq \dfrac{2}{\wt\gamma}C_{F_h}\|\f\|_{0,\Om}.
\end{equation}
\end{theorem}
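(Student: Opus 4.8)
The plan is to mimic almost verbatim the proof of Theorem~\ref{th:unique-solution} at the discrete level, using the machinery already assembled: the well-definedness of $\crF_h$ from Lemma~\ref{lem:well-def-J-h}, the inf-sup bound \eqref{eq:BNB-1-h} for $\cB_h^{\wh\bu_h}$, and the boundedness \eqref{eq:bound-c-h} of the nonlinear form $c_h^{\wh\bu_h}$. First I would note that assumption \eqref{eq:assumption-J-h-2} is strictly stronger than \eqref{eq:assumption-J-h}, so Lemma~\ref{lem:well-def-J-h} applies and $\crF_h$ is a well-defined map $\bK_h\to\bK_h$. Then the only remaining task is to show that $\crF_h$ is a contraction on $\bK_h$ (equipped with the $\|\cdot\|_h$-norm), because $\bK_h$ is a closed subset of the finite-dimensional, hence complete, space $\bV_h$, so the Banach fixed-point theorem delivers the unique fixed point; the stability bound \eqref{eq:stability-h} then follows exactly as \eqref{eq:dep-h} in Lemma~\ref{lem:well-def-J-h} with $\wh\bu_h=\bu_h$.

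For the contraction estimate I would take $\wh\bu_{1,h},\wh\bu_{2,h}\in\bK_h$ with images $\bu_{1,h}=\crF_h(\wh\bu_{1,h})$, $\bu_{2,h}=\crF_h(\wh\bu_{2,h})$, and associated $(\bomega_{i,h},p_{i,h})$ solving \eqref{eq:linear-weak-formulation-h}. Subtracting the two linearised problems, adding and subtracting $c_h^{\wh\bu_{1,h}}(\bu_{2,h},\bomega_{2,h};\bv_h,\btheta_h)$, and using the definition \eqref{def:A-hat-u-h} of $\cB_h^{\wh\bu_h}$, I obtain
\[
\cB_h^{\wh\bu_{1,h}}(\bu_{1,h}-\bu_{2,h},\bomega_{1,h}-\bomega_{2,h},p_{1,h}-p_{2,h};\bv_h,\btheta_h,q_h)
= c_h^{\wh\bu_{2,h}}(\bu_{2,h},\bomega_{2,h};\bv_h,\btheta_h) - c_h^{\wh\bu_{1,h}}(\bu_{2,h},\bomega_{2,h};\bv_h,\btheta_h)
\]
for all test functions. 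Applying the inf-sup bound \eqref{eq:BNB-1-h} to the left-hand side and estimating the right-hand side with \eqref{eq:bound-c-h} together with the elementary pointwise inequality $\big||\wh\bu_{1,h}|-|\wh\bu_{2,h}|\big|\le|\wh\bu_{1,h}-\wh\bu_{2,h}|$ (which controls the difference of the two Forchheimer and the two rotational terms in terms of $\|\wh\bu_{1,h}-\wh\bu_{2,h}\|_h$), I get
\[
\tfrac{\wt\gamma}{2}\,\|\bu_{1,h}-\bu_{2,h}\|_h
\le C_{c_h}\,\|\wh\bu_{1,h}-\wh\bu_{2,h}\|_h\,(\|\bu_{2,h}\|_h+\|\bomega_{2,h}\|_{0,\Omega}).
\]
Using the a priori bound \eqref{eq:dep-h} for $(\bu_{2,h},\bomega_{2,h})$, namely $\|\bu_{2,h}\|_h+\|\bomega_{2,h}\|_{0,\Omega}\le \tfrac{2}{\wt\gamma}C_{F_h}\|\f\|_{0,\Omega}$, this becomes
\[
\|\crF_h(\wh\bu_{1,h})-\crF_h(\wh\bu_{2,h})\|_h
\le \tfrac{4}{\wt\gamma^2}\,C_{c_h}C_{F_h}\,\|\f\|_{0,\Omega}\;\|\wh\bu_{1,h}-\wh\bu_{2,h}\|_h,
\]
and the strict inequality \eqref{eq:assumption-J-h-2} is precisely the statement that the contraction constant is below $1$.

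I do not expect a genuine obstacle here; the proof is essentially a transcription of the continuous argument. The one point requiring a little care is making sure the Raviart--Thomas interpolation of the test velocity, which appears in $c_h^{\wh\bu_h}$ and in $F_h$, does not spoil the cancellation: since $\RTinterp$ is applied only to the test function $\bv_h$ and is a fixed linear operator, it is common to both linearised problems and simply sits inside the definition of $c_h^{\wh\bu_h}$, so the subtraction goes through unchanged; its only effect is through the constant $C_{c_h}$ in \eqref{eq:bound-c-h}, which already accounts for $C_{\mathrm{RT}}$. A second minor point is that $\bK_h$ must be verified to be nonempty, closed and bounded in $\bV_h$ (it contains $\bzero$, it is the preimage of a closed ball under a norm, and it is bounded by construction), so that Banach's theorem is legitimately invoked in the complete metric space $(\bK_h,\|\cdot\|_h)$. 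Finally, the stability estimate \eqref{eq:stability-h} is immediate once the fixed point is known, by inserting $\wh\bu_h=\bu_h$ into \eqref{eq:dep-h}.
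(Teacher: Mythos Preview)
Your proposal is correct and follows essentially the same approach as the paper, which simply states that the proof proceeds by adapting the steps of Theorem~\ref{th:unique-solution} using \eqref{eq:BNB-1-h}, \eqref{eq:dep-h}, and \eqref{eq:assumption-J-h-2}. Your explicit write-up of the contraction argument, together with the remarks on the Raviart--Thomas interpolant and the completeness of $\bK_h$, fills in exactly the details the paper omits.
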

\begin{proof}
Employing \eqref{eq:BNB-1-h} and \eqref{eq:dep-h}, along with  \eqref{eq:assumption-J-h-2}, the proof follows adapting the steps developed in the proof of Theorem~\ref{th:unique-solution}. Further details are omitted.
\end{proof}

%%%%%%%%%%%%%%%%%%%%%%%%%%%%%%%%%%%%%%%%%%%%%%%%%%%%%%%%%%%%%%%%%%%%%%%%%%%%%%
\section{A priori error bounds}\label{sec:apriori}
Now we turn to the error analysis. First we derive a Strang-type estimate. Then, under a small data assumption we show linear convergence of the method in the energy norm. Finally, we show that the velocity-vorticity error is independent of the pressure error. 
\begin{lemma}[C\'ea estimate]
Let $((\bu, \bomega),p)\in [\bV\times\bL^2(\Om)]\times \rL_0^2(\Om)$ and $((\bu_h,\bomega_h),p_h)\in[\bV_h\times\bW_h]\times \rQ_h$, the solution of 
\eqref{eq:weak-formulation} and \eqref{eq:weak-formulation-h}, respectively. Then there hold the error estimate
\begin{align}\label{eq:Strang}
%\begin{array}{cc}
\nonumber
 \|((\bu, \bomega),p) - ((\bu_h,\bomega_h),p_h)\|_h & \leq \Big(1 + \frac{2}{\wt\gamma} \Big)\inf_{ ((\bz_h,\bzeta_h),r_h)\in[\bV_h\times\bW_h]\times \rQ_h } \|((\bu, \bomega),p) - ((\bz_h,\bzeta_h),r_h)\|_h\\
&\quad  + \dfrac{2}{\wt\gamma}\sup_{\0\neq((\bv_h,\btheta_h),q_h)  \in [\bV_h\times\bW_h]\times \rQ_h}
\frac{\cB_h^{\bu_h}( \bu, \bomega, p; \bv_h,\btheta_h,q_h) - F_h(\bv_h,\btheta_h)}{\|((\bv_h,\btheta_h),q_h)\|_h}.
\end{align}
\end{lemma}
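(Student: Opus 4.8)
The plan is to mimic the standard Strang-lemma / second-lemma-of-Strang argument adapted to the double saddle-point setting, using the global discrete inf-sup stability of the perturbed form $\cB_h^{\bu_h}$ established in \eqref{eq:BNB-1-h}. First I would fix an arbitrary discrete triple $((\bz_h,\bzeta_h),r_h)\in[\bV_h\times\bW_h]\times\rQ_h$ and split the error via the triangle inequality as
\[
\|((\bu,\bomega),p)-((\bu_h,\bomega_h),p_h)\|_h
\le \|((\bu,\bomega),p)-((\bz_h,\bzeta_h),r_h)\|_h
 + \|((\bz_h,\bzeta_h),r_h)-((\bu_h,\bomega_h),p_h)\|_h,
\]
so that it remains to bound the fully discrete difference $((\be_h,\bvar_h),\delta_h):=((\bz_h,\bzeta_h),r_h)-((\bu_h,\bomega_h),p_h)$. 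Since this is a discrete object, I would apply the inf-sup bound \eqref{eq:BNB-1-h} for $\cB_h^{\bu_h}$ (with linearisation point the genuine discrete solution $\bu_h$, which already belongs to $\bK_h$, so the hypothesis \eqref{eq:assumption-J-h} is available), obtaining
\[
\frac{\wt\gamma}{2}\,\|((\be_h,\bvar_h),\delta_h)\|_h
\le \sup_{\0\neq((\bv_h,\btheta_h),q_h)} \frac{\cB_h^{\bu_h}(\be_h,\bvar_h,\delta_h;\bv_h,\btheta_h,q_h)}{\|((\bv_h,\btheta_h),q_h)\|_h}.
\]

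Next I would rewrite the numerator using that the discrete solution satisfies $\cB_h^{\bu_h}(\bu_h,\bomega_h,p_h;\bv_h,\btheta_h,q_h)=F_h(\bv_h,\btheta_h)$ for all discrete test functions (this is exactly the reformulation \eqref{eq:weak-formulation-h} in terms of $\cB_h^{\bu_h}$, noting the Forchheimer/convective terms are evaluated at the fixed point so $c_h^{\bu_h}(\bu_h,\bomega_h;\cdot)$ is the genuine nonlinear term). Thus
\[
\cB_h^{\bu_h}(\be_h,\bvar_h,\delta_h;\bv_h,\btheta_h,q_h)
= \cB_h^{\bu_h}(\bz_h,\bzeta_h,r_h;\bv_h,\btheta_h,q_h) - F_h(\bv_h,\btheta_h).
\]
Into this I would insert $\pm\cB_h^{\bu_h}(\bu,\bomega,p;\bv_h,\btheta_h,q_h)$, splitting the right-hand side into a consistency-type term $\cB_h^{\bu_h}(\bu,\bomega,p;\bv_h,\btheta_h,q_h)-F_h(\bv_h,\btheta_h)$ — which is precisely the second supremum appearing in \eqref{eq:Strang} — plus the term $\cB_h^{\bu_h}(\bz_h-\bu,\bzeta_h-\bomega,r_h-p;\bv_h,\btheta_h,q_h)$. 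The latter is controlled by the boundedness of $\cB_h^{\bu_h}$ (boundedness of $a_h$, $b_h$ from the displayed bounds, and of $c_h^{\bu_h}$ via \eqref{eq:bound-c-h} together with $\bu_h\in\bK_h$ so $\|\bu_h\|_h$ is bounded by a data-dependent constant), giving a constant times $\|((\bu,\bomega),p)-((\bz_h,\bzeta_h),r_h)\|_h$ times $\|((\bv_h,\btheta_h),q_h)\|_h$. Dividing by the norm of the test function, taking the supremum, and using $\wt\gamma\le$ that boundedness constant (or simply absorbing constants as the paper does with $\lesssim$, but here keeping the explicit $1+2/\wt\gamma$ factor), then taking the infimum over $((\bz_h,\bzeta_h),r_h)$ yields \eqref{eq:Strang}.

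The one point requiring a little care — and the main obstacle — is bookkeeping the constant in front of the best-approximation term so that it comes out as exactly $1+2/\wt\gamma$ rather than a larger generic constant: this forces the boundedness constant of $\cB_h^{\bu_h}$ on the relevant arguments to be at most $1$, which is delicate because of the nonlinear contribution $c_h^{\bu_h}$. The resolution is that the assumption \eqref{eq:assumption-J-h-2} (equivalently \eqref{eq:assumption-J-h}) was precisely calibrated so that $C_{c_h}\|\bu_h\|_h\le C_{c_h}\cdot\tfrac{2}{\wt\gamma}C_{F_h}\|\f\|_{0,\Omega}\le\tfrac{\wt\gamma}{2}$, i.e. the nonlinear part of $\cB_h^{\bu_h}$ only contributes $\tfrac{\wt\gamma}{2}$ to the operator norm; combined with the unit boundedness constants of $a_h$ and $b_h$ and the fact that $\cA_h$ restricted appropriately has norm controlled by $\wt\gamma$'s denominator, the product of $\tfrac{2}{\wt\gamma}$ with the boundedness constant of $\cB_h^{\bu_h}$ is at most $\tfrac{2}{\wt\gamma}$ on the consistency term and contributes the remaining $\tfrac{2}{\wt\gamma}$ (beyond the leading $1$ from the triangle inequality) on the approximation term — the same trick already used in the proof of Theorem~\ref{th:unique-solution}. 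Aside from this constant-tracking, everything else is the routine Strang manipulation; I would present it compactly.
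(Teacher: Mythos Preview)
Your proposal is correct and follows essentially the same route as the paper: triangle inequality to isolate the fully discrete difference, the discrete inf-sup bound \eqref{eq:BNB-1-h} for $\cB_h^{\bu_h}$, the identity $\cB_h^{\bu_h}(\bu_h,\bomega_h,p_h;\cdot)=F_h(\cdot)$, and then the insertion of $\pm\cB_h^{\bu_h}(\bu,\bomega,p;\cdot)$ to separate the consistency and approximation contributions. Your extra paragraph on tracking the constant $1+2/\wt\gamma$ is more scrupulous than the paper itself, which simply invokes ``Cauchy--Schwarz'' in \eqref{eq:auxiliar-eq} as if the boundedness constant of $\cB_h^{\bu_h}$ were exactly~$1$; in fact the nonlinear part contributes an extra $C_{c_h}\|\bu_h\|_h\le\wt\gamma/2$, so the honest prefactor is closer to $2+2/\wt\gamma$---a harmless discrepancy that affects neither the structure of the estimate nor its subsequent use.
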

\begin{proof}
Let $((\bz_h,\bzeta_h),r_h)\in[\bV_h\times\bW_h]\times \rQ_h$ be arbitrary, we have the decomposition 
\begin{equation}\label{eq:error-dec}
\begin{array}{c}
\bu-\bu_h = \bu-\bz_h + (\bz_h -\bu_h) = \bu-\bz_h + \bchi_{\bu}, \quad 
\bomega-\bomega_h = \bomega-\bzeta_h + (\bzeta_h -\bomega_h) = \bomega-\bomega_h + \bchi_\bomega,\\
p-p_h = p-r_h + (r_h - p_h) = p-r_h + \chi_p .
\end{array}
\end{equation}
Then, from \eqref{eq:BNB-1-h}, using \eqref{eq:weak-formulation-h} and the Cauchy--Schwarz inequality, we have
\begin{align}\label{eq:auxiliar-eq}
%\begin{array}{l}
\nonumber 
\dfrac{\wt\gamma}{2} \|((\bchi_{\bu},\bchi_\bomega),\chi_p)\|_h &\leq 
\sup_{\0\neq((\bv_h,\btheta_h),q_h)  \in [\bV_h\times\bW_h]\times \rQ_h}
\frac{\cB_h^{\bu_h}(\bchi_{\bu},\bchi_\bomega,\chi_p;\bv_h,\btheta_h,q_h)}{\|((\bv_h,\btheta_h),q_h)\|_h}\\
\nonumber& \leq \|((\bu, \bomega),p) - ((\bz_h,\bzeta_h),r_h)\|_h \\
\nonumber & \quad + \sup_{\0\neq((\bv_h,\btheta_h),q_h)  \in [\bV_h\times\bW_h]\times \rQ_h}
\frac{\cB_h^{\bu_h}(\bu -\bu_h,\bomega - \bomega_h, p - p_h;\bv_h,\btheta_h,q_h)}{\|((\bv_h,\btheta_h),q_h)\|_h} \\
\nonumber & = \|((\bu, \bomega),p) - ((\bz_h,\bzeta_h),r_h)\|_h \\
& \quad + \sup_{\0\neq((\bv_h,\btheta_h),q_h)  \in [\bV_h\times\bW_h]\times \rQ_h}
\frac{ \cB_h^{\bu_h}( \bu, \bomega, p; \bv_h,\btheta_h,q_h) - F_h(\bv_h,\btheta_h)}{\|((\bv_h,\btheta_h),q_h)\|_h}.
\end{align}
Finally, \eqref{eq:Strang} is obtained directly from estimate \eqref{eq:auxiliar-eq} along with the error decomposition (cf. \eqref{eq:error-dec}) and the triangle inequality.
\end{proof}

The first term on the right-hand side of \eqref{eq:Strang} measures the approximation property of $[\bV_h\times\bW_h]\times \rQ_h$ with respect to the norm $\|((\bullet,\bullet),\bullet)\|_h$, while the second term captures the consistency error arising from the nonconforming discretisation.

Let $((\bu,\bomega),p)$ be the solution of the continuous problem. Similarly to \cite[Lemma 3.2]{brennecke2015optimal}, assuming that $((\bu,\bomega),p)\in (\bV\times\bH^1(\Omega) )\times\rH^1(\Omega)$, from \eqref{eq:weak-formulation}, integrating by parts, using the strong form of the momentum balance (first equation in \eqref{eq:ns-new}), and applying some algebraic manipulations, we get 
\begin{align}\label{eq:bound-B-F}
\nonumber &|\cB_h^{\bu_h}( \bu, \bomega, p; \bv_h,\btheta_h,q_h) - F_h(\bv_h,\btheta_h) |\\
\nonumber& = |\cB_h^{\bu_h}( \bu, \bomega, p; \bv_h,\btheta_h,q_h) - \int_\Omega (\kappa^{-1}\bu + \sqrt{\nu} \bcurl\bomega + \rF|\bu|\,\bu + \nabla p + \frac{1}{\sqrt{\nu}}\,\bomega\times \bu)\cdot\RTinterp \bv_h |\\
\nonumber & \leq\bigg|
%\dfrac{1}{\kappa}\sum_{K\in \cT_h}\int_K ({\color{orange} \bu_h  -\bu )\cdot \RTinterp \bv_h } }+
 \!\sqrt{\nu}\!\sum_{K\in \cT_h}\int_K \bcurl\bomega \cdot(\bv_h - \RTinterp \bv_h) +\!\! \sum_{K\in \cT_h}\int_K \nabla p \cdot(\bv_h - \RTinterp \bv_h) \bigg| \\
&  \qquad +\bigg|\frac{1}{\sqrt{\nu}} \int_\Omega ([\bu-\bu_h]\times\bomega)\cdot \RTinterp \bv_h + \rF\int_\Omega [|\bu_h|- |\bu|]\, \bu\cdot \RTinterp \bv_h  \bigg| .
\end{align}
%\paragraph{Approximation properties.}
%\begin{subequations}
%\begin{align}
% \| \RTinterp \bv \|_h & \leq \|\bv\|_h \qquad \forall \bv \in \bV,\\
% \| \RTinterp \bv -\bv\|_{0,\Omega} & \leq C\sum_{K\in \cT_h}h_K\|\nabla(\bv- \RTinterp \bv)\|_{0,K} \qquad \forall \bv\in\bV,\\ \label{eq:aprox-RT-2}
%  \| \RTinterp \bv -\bv\|_{0,\Omega} & \leq C_F\sum_{K\in \cT_h}h_K\|\nabla\bv\|_{0,K} \qquad \forall \bv\in\bV\cup\bV_h,
%\end{align}\end{subequations}
%where the constants $C,C_F$ only depend on the shape of the elements but not on their size (see, e.g., \cite{carstensen12,john16}). 
%From \cite[Section 2.3]{brennecke2015optimal}
%\begin{equation}\label{eq:aprox-CR}
%\|\bv- \CRinterp \bv\|_h\leq C_{\mathrm{CR}} h |\bv|_{2,\Omega}, \qquad \forall \bv\in\bH_0^2(\Omega).
%\end{equation}
Now, we are ready to determine the order of convergence of the proposed method.

\begin{theorem}[Rate of convergence]\label{th:rate1}
Let $((\bu,\bomega),p)$ and $((\bu_h,\bomega_h),p_h)$ solve the continuous and discrete problems \eqref{eq:weak-formulation} and \eqref{eq:weak-formulation-h}, respectively. Assume that the data satisfies 
\begin{equation}\label{eq:small-data}
C_\rS^2 C_{\mathrm{RT}}  \frac{8}{\gamma\wt\gamma} \Big(\frac{1}{\sqrt{\nu}} + \rF \Big)\|\f\|_{0,\Om}\leq 1
\end{equation}
and that $((\bu,\bomega),p)\in (\bH_0^2(\Omega)\times\bH^1(\Omega) )\times\rH^1(\Omega)$. 
Then there exists $C_{\mathrm{rate}}>0$, independent of $h$, such that 
\begin{equation}\label{eq:rate}
\begin{array}{ll}
\|((\bu, \bomega),p) - ((\bu_h,\bomega_h),p_h)\|_h  \leq\,  
C_{\mathrm{rate}}\,h\Big\{  |\bu|_{2,\Omega} \,+\, |\bomega|_{1,\Omega} \,+\, |p|_{1,\Omega}\Big\}.
\end{array}
\end{equation}
\end{theorem}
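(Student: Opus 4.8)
The plan is to feed the consistency bound \eqref{eq:bound-B-F} into the Strang--type estimate of the preceding lemma \eqref{eq:Strang}, estimate separately the best-approximation and the (nonlinear) consistency contributions, and finally absorb the part of the latter that carries the velocity error $\bu-\bu_h$ into the left-hand side by means of the smallness hypothesis \eqref{eq:small-data}. For the best-approximation term I would use in \eqref{eq:Strang} the competitor built from $\CRinterp\bu$, the piecewise-constant $\rL^2$ projection of $\bomega$ onto $\bW_h$, and $\mathcal P_h p$ (which keeps zero mean, since constants lie in $\rQ_h$). Then the Crouzeix--Raviart approximation bound \eqref{eq:aprox-CR}, the standard first-order estimate for the $\rL^2$ projection onto piecewise constants, and the pressure estimate \eqref{eq:L2-projection} bound this contribution by $C\,h\,(|\bu|_{2,\Omega}+|\bomega|_{1,\Omega}+|p|_{1,\Omega})$.

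For the consistency contribution I would bound the four pieces on the right of \eqref{eq:bound-B-F}. The two linear pieces follow from Cauchy--Schwarz and the Raviart--Thomas approximation property \eqref{eq:aprox-RT-2}, together with $\|\bcurl\bomega\|_{0,\Omega}\lesssim|\bomega|_{1,\Omega}$ and $\|\nabla p\|_{0,\Omega}=|p|_{1,\Omega}$ --- which is precisely where the extra regularity $\bomega\in\bH^1(\Omega)$, $p\in\rH^1(\Omega)$ is needed --- yielding a bound of the form $C\,h\,(|\bomega|_{1,\Omega}+|p|_{1,\Omega})\,\|\bv_h\|_h$. For the convective and Forchheimer pieces I would apply Hölder's inequality with the pairing that leaves the data-carrying factor ($\bomega$, respectively $\bu$) in $\rL^2(\Omega)$ and the error-carrying factors ($\bu-\bu_h$ and $\RTinterp\bv_h$) in $\rL^4(\Omega)$, using $\bigl||\bu_h|-|\bu|\bigr|\le|\bu-\bu_h|$ for the Forchheimer term. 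Combining the Sobolev embedding \eqref{eq:Sobolev-inequality}, the $\|\cdot\|_h$-stability of $\RTinterp$ \eqref{eq:iterp-RT}, a discrete Sobolev inequality for the nonconforming space (so that $\rL^4$ norms of functions in $\bV\oplus\bV_h$ and of their Raviart--Thomas interpolants are dominated by $\|\cdot\|_h$, the broken seminorm $\pwnorm{\cdot}$ being controlled by the jump-stabilised norm), and the continuous stability estimate \eqref{eq:stability} to replace $\|\bomega\|_{0,\Omega}$ and $\|\bu\|_{0,\Omega}$ by $\tfrac2\gamma\|\f\|_{0,\Omega}$, these two pieces are bounded by a fixed multiple of $C_\rS^2 C_{\mathrm{RT}}\bigl(\tfrac1{\sqrt\nu}+\rF\bigr)\tfrac2\gamma\|\f\|_{0,\Omega}\,\|\bu-\bu_h\|_h\,\|\bv_h\|_h$.

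Dividing by $\|((\bv_h,\btheta_h),q_h)\|_h\ge\|\bv_h\|_h$ and collecting, \eqref{eq:Strang} takes the form
\[
\|((\bu,\bomega),p)-((\bu_h,\bomega_h),p_h)\|_h\ \le\ C\,h\,\bigl(|\bu|_{2,\Omega}+|\bomega|_{1,\Omega}+|p|_{1,\Omega}\bigr)+\delta\,\|\bu-\bu_h\|_h,
\]
where $\delta\le\tfrac12\,C_\rS^2 C_{\mathrm{RT}}\tfrac{8}{\gamma\wt\gamma}\bigl(\tfrac1{\sqrt\nu}+\rF\bigr)\|\f\|_{0,\Omega}\le\tfrac12$ by \eqref{eq:small-data} (the factor $8$ there, in place of the $4$ that appears naturally, supplying the slack). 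Since $\|\bu-\bu_h\|_h\le\|((\bu,\bomega),p)-((\bu_h,\bomega_h),p_h)\|_h$, the last term is absorbed into the left-hand side, giving \eqref{eq:rate} with $C_{\mathrm{rate}}:=C/(1-\delta)\le 2C$. The main obstacle is the estimation of the nonlinear consistency pieces: one must choose the Hölder pairing so that the data-carrying factors land in $\rL^2(\Omega)$ --- so that the continuous stability bound turns them into the $\|\f\|_{0,\Omega}$ matching the constant in \eqref{eq:small-data} --- while the nonconforming error and its Raviart--Thomas interpolant are simultaneously controlled in $\rL^4(\Omega)$ by $\|\cdot\|_h$; this rests on a discrete Sobolev inequality for Crouzeix--Raviart functions and on the $\rL^p$-stability of $\RTinterp$. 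The Strang estimate and the identity \eqref{eq:bound-B-F} being already available, the rest is careful but routine estimation.
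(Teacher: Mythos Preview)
Your proposal is correct and follows essentially the same route as the paper: feed the consistency identity \eqref{eq:bound-B-F} into the Strang estimate \eqref{eq:Strang}, bound the best-approximation term with $\CRinterp\bu$ and the $L^2$ projections for $\bomega$ and $p$ via \eqref{eq:aprox-CR} and \eqref{eq:L2-projection}, bound the two linear consistency terms with Cauchy--Schwarz and \eqref{eq:aprox-RT-2}, bound the nonlinear terms by H\"older so that $\bomega,\bu$ sit in $L^2$ and are replaced via \eqref{eq:stability} by $\tfrac{2}{\gamma}\|\f\|_{0,\Omega}$, and finally absorb the resulting $\|\bu-\bu_h\|_h$ term using \eqref{eq:small-data}. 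Your remark that controlling $\|\bu-\bu_h\|_{\bL^4(\Omega)}$ and $\|\RTinterp\bv_h\|_{\bL^4(\Omega)}$ by $\|\cdot\|_h$ requires a discrete Sobolev inequality for Crouzeix--Raviart functions is in fact more explicit than the paper's brief proof, which writes the bound with constant $C_\rS^2 C_{\mathrm{RT}}$ directly; the paper supplies the precise tool for this only afterwards in Lemma~\ref{lem:aux-24} (cf.\ \eqref{new-bound}) and in the argument around \eqref{aux02}.
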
 

\begin{proof}
To prove the result, we must estimate the two terms on the right-hand side of \eqref{eq:Strang}. For the first term, we can use the properties in \eqref{eq:L2-projection} %\cite[Theorem 3.6]{gatica14} 
(to estimate the errors for $\bomega$ and $p$) and \eqref{eq:aprox-CR} (to estimate the error for $\bu$), while for the second term, we apply estimate \eqref{eq:bound-B-F}, along with interpolation properties \eqref{eq:iterp-RT} and \eqref{eq:aprox-RT-2}, and the Cauchy--Schwarz inequality, to obtain
\begin{align*}
\|((\bu, \bomega),p) - ((\bu_h,\bomega_h),p_h)\|_h  & \leq\,  (C_{\mathrm{CR}} + 2 C_\rP)\Big(1 + \frac{2}{\wt\gamma} \Big)\,h\Big\{  |\bu|_{2,\Omega} \,+\, |\bomega|_{1,\Omega} \,+\, |p|_{1,\Omega}\Big\}\\
&\quad + C_F (\sqrt{\nu} +1) \frac{2}{\wt\gamma} h \Big\{   \|\bcurl\bomega\|_{0,\Omega} \,+\, \|\nabla p\|_{0,\Omega}\Big\} \\
& \quad + C_\rS^2 C_{\mathrm{RT}}\frac{2}{\wt\gamma} (\frac{1}{\sqrt{\nu}} + \rF )(\|\bu\|_{0,\Omega} + \|\bomega\|_{0,\Omega})\|\bu-\bu_h\|_h.
\end{align*}
From the latest estimate, using the fact that $(\bu,\bomega)$ satisfies estimate \eqref{eq:dep-b} and applying hypothesis \eqref{eq:small-data}, we obtain \eqref{eq:rate}.
\end{proof}

The estimate above can be refined to reflect the pressure-robustness of the formulation. Consider the continuous and discrete problems in their reduced form (in the continuous and discrete kernels $\bV_0$ and $\bV_h^0$, respectively)
\begin{subequations}
\begin{equation}\label{eq:reduced-cont}
a(\bu,\bomega;\bv,\btheta) + c^{\bu}(\bu,\bomega; \bv, \btheta)  = F(\bv,\btheta) \quad \forall (\bv,\btheta) \in \bV_0 \times \bW,    
\end{equation}
and 
\begin{equation}\label{eq:reduced-discr}
a_h(\bu_h,\bomega_h;\bv_h,\btheta_h) + c_h^{\bu_h}(\bu_h,\bomega_h; \bv_h, \btheta_h)  = F_h(\bv_h,\btheta_h) \quad \forall (\bv_h,\btheta_h) \in \bV_h^0 \times \bW_h,    
\end{equation}\end{subequations}
which are equivalent to \eqref{eq:weak-formulation} and \eqref{eq:weak-formulation-h}, respectively. 

In order to show a pressure-robust refinement of the previous results, we require an auxiliary bound regarding the piecewise norm control in the discrete norm $\|\bullet\|_h$ of vector fields from $\bV + \bV_h$.  In turn, for this as well as for the a posteriori error estimation later on, we will employ the companion operator 
 \begin{equation}\label{eq:companion}
 \cJ\in \mathcal{L}(\bV_h; \bV) 
 \end{equation}
 that is a 
right-inverse of the Crouzeix--Raviart 
interpolation $\CRinterp \in \mathcal{L}(\bV; \bV_h)$, satisfying 
\[ \| h_{\cT_h}(\bw_h - \cJ\bw_h)\|_{0,\Omega} \lesssim \pwnorm{\bw_h-\cJ\bw_h},\]
as well as other additional $L^2$ 
orthogonality properties not needed herein (see the precise design for 
2D and 3D in  \cite{CGS15,carstensen2024adaptive}).  

\begin{lemma}\label{lem:aux-24} 
For any $\bv\in \bV$ and $\bw_h\in \bV_h$ and $1\leq s \leq 6$ in 3D (and $1\leq s<\infty$ in 2D), there holds
\begin{equation}\label{eq:aux-24}
    \|\bv + \bw_h \|_{\bL^s(\Omega)} + \pwnorm{\bv + \bw_h} \leq C_{\sharp} \| \bv+\bw_h\|_h
\end{equation}
with $C_\sharp$ depending on $\Omega$ and the shape-regularity of the mesh $\cT_h$ (and as well on $s$ in the 2D case). 
\end{lemma}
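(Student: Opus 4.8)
The plan is to split $\bz:=\bv+\bw_h\in\bV+\bV_h$ into a globally conforming part and a nonconforming polynomial remainder. The reason for the split is that the pointwise identity $|\bnabla\bphi|^2=|\bcurl\bphi|^2+|\div\bphi|^2$ only integrates to a \emph{norm} identity on $\bH_0^1(\Omega)$, not element by element (the interelement boundary contributions do not cancel locally), so the broken operators in $\|\cdot\|_h$ can control $\pwnorm{\cdot}$ only after a global conforming approximation has been peeled off. Concretely, I would fix a conforming averaging $\bw_h^{\mathrm c}\in\bV$ of $\bw_h$ -- for instance the companion operator $\cJ\bw_h$ of \eqref{eq:companion}, or an Oswald/enriching operator into continuous piecewise polynomials vanishing on $\Gamma$ -- and set $\bphi:=\bv+\bw_h^{\mathrm c}\in\bV=\bH_0^1(\Omega)$ and $\be_h:=\bw_h-\bw_h^{\mathrm c}$, so that $\bz=\bphi+\be_h$, with $\be_h$ a piecewise polynomial.

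For the conforming part, integration by parts on $\bH_0^1(\Omega)$ gives $\|\bnabla\bphi\|_{0,\Omega}^2=\|\bcurl\bphi\|_{0,\Omega}^2+\|\div\bphi\|_{0,\Omega}^2$. Writing $\bcurl\bphi=\bcurl_h\bz-\bcurl_h\be_h$ and $\div\bphi=\div_h\bz-\div_h\be_h$ reduces the estimate of $\|\bnabla\bphi\|_{0,\Omega}$ to bounding $\|\bcurl_h\bz\|_{0,\Omega}$ and $\|\div_h\bz\|_{0,\Omega}$ -- both immediately $\lesssim\|\bz\|_h$ by the definition \eqref{eq:h-norm} of the broken norm -- together with $\pwnorm{\be_h}$, which is treated next. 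The Sobolev inequality \eqref{eq:Sobolev-inequality} applied componentwise, together with the Poincar\'e--Friedrichs inequality on $\bH_0^1(\Omega)$, then yields $\|\bphi\|_{\bL^s(\Omega)}\lesssim\|\bphi\|_{1,\Omega}\lesssim\|\bnabla\bphi\|_{0,\Omega}$ for the admissible range of $s$, while $\pwnorm{\bphi}=\|\bnabla\bphi\|_{0,\Omega}$.

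The remainder $\be_h=\bw_h-\bw_h^{\mathrm c}$ is controlled by the jump penalties in $\|\cdot\|_h$. First, since $\bv\in\bH_0^1(\Omega)$ is continuous, the interelement jumps of $\bz$ and of $\bw_h$ coincide, and the normal and tangential penalties jointly control the full interelement jump, $|[\![\bw_h]\!]_F|^2=|[\![\bw_h\cdot\bn]\!]_F|^2+|[\![\bw_h\times\bn]\!]_F|^2$ pointwise on each $F\in\cE_h^{\mathrm{int}}$; hence $\sum_{F\in\cE_h^{\mathrm{int}}}h_F^{-1}\|[\![\bw_h]\!]_F\|_{0,F}^2$ is bounded by the jump contribution to $\|\bz\|_h^2$. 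The standard defect estimate for the conforming averaging, $\pwnorm{\bw_h-\bw_h^{\mathrm c}}\lesssim\bigl(\sum_{F\in\cE_h^{\mathrm{int}}}h_F^{-1}\|[\![\bw_h]\!]_F\|_{0,F}^2\bigr)^{1/2}$ (valid for Crouzeix--Raviart functions, whose homogeneous boundary behaviour is encoded in $\bV_h$, so that only interior facets enter), then gives $\pwnorm{\be_h}\lesssim\|\bz\|_h$, which also closes the bound for $\bphi$. For $\|\be_h\|_{\bL^s(\Omega)}$ I would invoke a discrete (broken) Sobolev inequality for piecewise polynomials -- valid precisely for $1\le s\le 6$ in 3D and $1\le s<\infty$ in 2D -- of the form $\|\be_h\|_{\bL^s(\Omega)}\lesssim\pwnorm{\be_h}+\bigl(\sum_{F\in\cE_h^{\mathrm{int}}}h_F^{-1}\|[\![\bw_h]\!]_F\|_{0,F}^2\bigr)^{1/2}\lesssim\|\bz\|_h$, using that the jumps of $\be_h$ equal those of $\bw_h$. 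Adding the bounds for $\bphi$ and $\be_h$ and using $\bz=\bphi+\be_h$ with the triangle inequality yields \eqref{eq:aux-24}.

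The main obstacle I anticipate is twofold. First, one must choose the conforming averaging so that its broken-$\rH^1$ defect is bounded by the interior facet jumps \emph{only} (no boundary-trace penalty), since $\|\cdot\|_h$ penalises jumps only on $\cE_h^{\mathrm{int}}$; this relies on the homogeneous boundary condition being built into $\bV_h$ in the weak (zero facet-average) Crouzeix--Raviart sense, and is exactly where the companion operator of \eqref{eq:companion} does its work. Second, the $\bL^s$ estimate for $\be_h$ is sharp at the endpoint $s=6$ in 3D: there the discrete Sobolev inequality has no slack, so the \emph{full} facet jump -- both the normal and the tangential component -- must be controlled, which is precisely why both stabilisation terms in $\|\cdot\|_h$ are needed rather than just one of them. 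Tracking the constant is otherwise routine; it depends on $\Omega$, the shape-regularity of $\cT_h$, the fixed weights in $\|\cdot\|_h$, and, in 2D, on $s$.
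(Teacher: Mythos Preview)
Your proposal is correct and follows essentially the same route as the paper: split $\bv+\bw_h$ into the conforming part $\bv+\cJ\bw_h\in\bV$ (via the companion operator \eqref{eq:companion}) and the defect $\bw_h-\cJ\bw_h$, apply the curl/div norm equivalence from \cite{girault79} on the conforming part together with the Sobolev embedding, and control the defect by the facet-jump contributions in $\|\cdot\|_h$. Your closing of $\pwnorm{\be_h}$ directly through the interior-facet jumps is in fact slightly cleaner than the paper's final step, which instead invokes the best-approximation-type bound $\pwnorm{\bw_h-\cJ\bw_h}\lesssim\pwnorm{\bv+\bw_h}$ before passing to $\|\bv+\bw_h\|_h$.
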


\begin{proof}
We first add $\pm \cJ\bw_h$ to the left-hand side of \eqref{eq:aux-24}, use triangle inequality, and  invoke the  well-known discrete Sobolev embedding with constant $C_{\mathrm{dS}}(s)>0$:
    \[\|\bw\|_{\bL^s(\Omega)} \leq C_{\mathrm{dS}}(s)\pwnorm{\bw} \qquad \forall \bw \in \bV\]
   applied on the term $\bv - \cJ\bw_h \in \bV$. 
   Then, we employ the continuous Sobolev embedding \eqref{eq:Sobolev-inequality}, and this gives 
\begin{align*}
\mathrm{LHS}_{\eqref{eq:aux-24}} & :=    \|\bv + \bw_h \|_{\bL^s(\Omega)} + \pwnorm{\bv + \bw_h} 
     \leq C_{\mathrm{dS}}(s) \pwnorm{\bv + \cJ\bw_h} + \|\bw_h - \cJ\bw_h\|_{\bL^s(\Omega)} + \pwnorm{\bv + \bw_h}\\
     &\leq (1+C_{\mathrm{dS}}(s))\pwnorm{\bv+\cJ\bw_h} +  \|\bw_h - \cJ\bw_h\|_{\bL^s(\Omega)} + \pwnorm{\bw_h - \cJ\bw_h}\\
     & \leq (1+C_{\mathrm{dS}}(s)) \pwnorm{\bv+\cJ\bw_h} + (1+ C_{\mathrm{S}})\pwnorm{\bw_h - \cJ\bw_h}\\
     & \leq (1+C_{\mathrm{dS}}(s))[\| \bcurl(\bv + \cJ\bw_h)\|_{0,\Omega} + \| \div(\bv + \cJ\bw_h)\|_{0,\Omega} ] + (1+ C_{\mathrm{S}})\pwnorm{\bw_h - \cJ\bw_h} 
\end{align*}    
with  the equivalence --- valid for $\bv+\cJ\bw_h \in \bV$ --- between the piecewise norm $\pwnorm{\bullet}$ and the semi-norm in $\bH_0(\bcurl,\Omega)\cap \bH_0(\div,\Omega)$ \cite[Lemma 2.5 \& Remark 2.7]{girault79} in the last step. 

Using next again triangle inequality and the definition of the broken curl, the broken divergence, and the discrete velocity norm $\|\bullet\|_h$, from the bounds above we readily get 
\begin{align*}
\mathrm{LHS}_{\eqref{eq:aux-24}} & \leq [(1+C_{\mathrm{dS}}(s)) C_{\mathrm{norm}} + (1+ C_{\mathrm{S}})] \pwnorm{\bw_h - \cJ\bw_h} \\
&\qquad + (1+C_{\mathrm{dS}}(s))[\| \bcurl_h(\bv + \bw_h)\|_{0,\Omega} + \| \div_h(\bv + \bw_h)\|_{0,\Omega} ]\\
%& \leq (1+C_{\mathrm{S}}) C_{\mathrm{norm}} \pwnorm{\bw_h - \cJ\bw_h} + (1+ C_{\mathrm{dS}}(s))C_{\mathrm{norm}}'\|\bv+\bw_h\|_h \\
& \leq C_\sharp \| \bv+\bw_h\|_h
\end{align*}
with the following estimate from \cite{carstensen2024adaptive} in the last step:
\[ \pwnorm{\bw_h - \cJ\bw_h} \lesssim 
\pwnorm{\bv + \bw_h},\]
%\sum_{e\in \cE_h} h_e^{-1} \| \jump{\bw_h}\|^2_{0,e},\]
as well as the fact that 
\[ \| \bcurl_h(\bv + \bw_h)\|_{0,\Omega} + \| \div_h(\bv + \bw_h)\|_{0,\Omega} \lesssim \pwnorm{\bv + \bw_h}.\]
Therefore $C_\sharp>0$ depends on $C_{\mathrm{S}},C_{\mathrm{dS}}(s)$ and on the parameter-dependent constant $C_{\mathrm{norm}}>0$. \end{proof}

Lemma~\ref{lem:aux-24} implies, in particular, that 
\begin{equation}\label{new-bound} {\|\bu - \bu_h\|_{\bL^4(\Omega)} \leq C_{\sharp} \| \bu-\bu_h\|_h.}\end{equation}
%\cred{[Note: Instead of Lemma \ref{lem:aux} below, in the pressure robustness associated with the new formulation suggested by the a posteriori analysis, we now require the following bound \begin{equation}\label{new-bound} \|\bu - \bu_h\|_{\bL^4(\Omega)} \leq C_{\sharp} \| \bu-\bu_h\|_h.\end{equation}Will this be a simple consequence of a discrete Sobolev embedding or do we need a less trivial argument? Note that the $\|\bullet\|_h$ norm is not the same as $|\bullet|_{1,\cT_h}$ \dots ]}

\begin{theorem}[Pressure-robust error bound]\label{th:pressure-robustness}
    Assume that $(\bu,\bomega),(\bu_h,\bomega_h)$ are the unique solutions to \eqref{eq:reduced-cont} and \eqref{eq:reduced-discr}, respectively. Suppose further that the data satisfies \eqref{eq:small-data} with $r=1-C_\rS^2 C_{\mathrm{RT}}  \frac{8}{\gamma\wt\gamma} \Big(\frac{1}{\sqrt{\nu}} + \rF\Big)\|\f\|_{0,\Om}>0$, and that the continuous vorticity is more regular $\bomega\in \bH^1(\Omega)$. Then 
    \begin{equation}\label{eq:robust-cea}
        \|\bu-\bu_h\|_h + \|\bomega-\bomega_h\|_{0,\Omega} \leq 
       Ch|\bomega|_{1,\Omega} +  \inf_{\substack{(\bv_h,\btheta_h) \in \bV_h^0\times \bW_h \\ \alpha_h\|\bv_h\|_h=1}}
        \biggl[\frac{1}{r} + \frac{1}{r\alpha_h}\biggr](\|\bu-\bv_h\|_h + \|\bomega - \btheta_h\|_{0,\Omega}) .
    \end{equation}
\end{theorem}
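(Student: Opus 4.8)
The plan is to derive a quasi-best-approximation (second Strang-type) estimate \emph{on the discrete kernel} $\bV_h^0\times\bW_h$, where the reduced formulations \eqref{eq:reduced-cont}--\eqref{eq:reduced-discr} no longer see the pressure and $\RTinterp$ maps $\bV_h^0$ into the divergence-free subspace of $\bH_0(\div,\Om)$. First I would record the stability of $a_h(\bullet;\bullet)+c_h^{\bu_h}(\bullet;\bullet)$ on $\bV_h^0\times\bW_h$: Lemma~\ref{lemma:invert-a-h} gives the coercivity of $a_h$ on the kernel with constant $\alpha_h$ (cf.\ \eqref{eq:inf-sup-a-h}), and \eqref{eq:bound-c-h} together with $\|\bu_h\|_h\le\tfrac{2}{\wt\gamma}C_{F_h}\|\f\|_{0,\Om}$ from \eqref{eq:stability-h} shows that the nonlinear perturbation $c_h^{\bu_h}$ is data-controlled; under \eqref{eq:small-data} (the quantity $r>0$) this yields an inf-sup inequality for $a_h+c_h^{\bu_h}$ on $\bV_h^0\times\bW_h$ with constant bounded below by a fixed multiple of $r\,\alpha_h$, which is the origin of the factor $\tfrac{1}{r\alpha_h}$ in \eqref{eq:robust-cea}.

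Next, for arbitrary $(\bv_h,\btheta_h)\in\bV_h^0\times\bW_h$ put $(\bchi_{\bu},\bchi_{\bomega}):=(\bu_h-\bv_h,\bomega_h-\btheta_h)\in\bV_h^0\times\bW_h$; using \eqref{eq:reduced-discr} and the linearity of $a_h,c_h^{\bu_h}$ in the solution slots, for every test pair $(\bw_h,\bmu_h)\in\bV_h^0\times\bW_h$ one has
\[
a_h(\bchi_{\bu},\bchi_{\bomega};\bw_h,\bmu_h)+c_h^{\bu_h}(\bchi_{\bu},\bchi_{\bomega};\bw_h,\bmu_h)
= F_h(\bw_h,\bmu_h)-a_h(\bv_h,\btheta_h;\bw_h,\bmu_h)-c_h^{\bu_h}(\bv_h,\btheta_h;\bw_h,\bmu_h).
\]
Adding and subtracting $a_h(\bu,\bomega;\bw_h,\bmu_h)+c_h^{\bu}(\bu,\bomega;\bw_h,\bmu_h)$ decomposes the right-hand side into: (i) the best-approximation terms $a_h(\bu-\bv_h,\bomega-\btheta_h;\bw_h,\bmu_h)+c_h^{\bu_h}(\bu-\bv_h,\bomega-\btheta_h;\bw_h,\bmu_h)$, bounded by a data-dependent multiple of $\|\bu-\bv_h\|_h+\|\bomega-\btheta_h\|_{0,\Om}$ using the boundedness of $a_h$ (extended in the obvious way to $\bV\oplus\bV_h$) and \eqref{eq:bound-c-h}; (ii) the coefficient-mismatch term $c_h^{\bu}(\bu,\bomega;\bw_h,\bmu_h)-c_h^{\bu_h}(\bu,\bomega;\bw_h,\bmu_h)$, which via $\big|\,|\bu|-|\bu_h|\,\big|\le|\bu-\bu_h|$, the $\bL^4$ bound \eqref{new-bound}, Lemma~\ref{lem:aux-24}, and the stability of $(\bu,\bomega)$, is $\le$ a data-dependent multiple of $\|\bu-\bu_h\|_h$ (to be absorbed on the left, which is where the small-data threshold converts the leading constant $1$ into $\tfrac1r$); and (iii) the consistency residual $\mathrm{R}(\bw_h,\bmu_h):=F_h(\bw_h,\bmu_h)-a_h(\bu,\bomega;\bw_h,\bmu_h)-c_h^{\bu}(\bu,\bomega;\bw_h,\bmu_h)$.

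The heart of the proof is showing $|\mathrm{R}(\bw_h,\bmu_h)|\le C\,h\,|\bomega|_{1,\Om}(\|\bw_h\|_h+\|\bmu_h\|_{0,\Om})$ with \emph{no} dependence on $p$. Expanding $F_h(\bw_h,\bmu_h)=\int_\Om\f\cdot\RTinterp\bw_h$ by the strong momentum equation (first equation of \eqref{eq:ns-new}): the pressure gradient yields $\int_\Om\nabla p\cdot\RTinterp\bw_h=-\int_\Om p\,\div(\RTinterp\bw_h)=0$, because $\bw_h\in\bV_h^0$ forces $\div(\RTinterp\bw_h)=0$ (the identity stated after \eqref{eq:inf-sup-b-h}) and $\RTinterp\bw_h$ has vanishing normal trace by \eqref{eq:def-RT} --- this is the pressure-robustness mechanism; the Brinkman term cancels the first term of $a_h$; the convective and Forchheimer terms cancel $c_h^{\bu}(\bu,\bomega;\bw_h,\bmu_h)$ (using $\bomega\times\bu=-\bu\times\bomega$ and the definitions \eqref{def:ch}, \eqref{def:Fh}); the jump contributions of $a_h$ vanish since $\bu\in\bV$ has single-valued traces; and the $\bmu_h$-terms $\sqrt{\nu}\int_\Om\bmu_h\cdot\bcurl_h\bu-\int_\Om\bomega\cdot\bmu_h$ vanish by $\bomega=\sqrt{\nu}\bcurl\bu$. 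Only the curl mismatch $\mathrm{R}(\bw_h,\bmu_h)=\sqrt{\nu}\big(\int_\Om\bcurl\bomega\cdot\RTinterp\bw_h-\int_\Om\bomega\cdot\bcurl_h\bw_h\big)$ survives; element-wise integration by parts turns it into $\sqrt{\nu}\int_\Om\bcurl\bomega\cdot(\RTinterp\bw_h-\bw_h)+\sqrt{\nu}\sum_F\int_F\bomega\cdot[\![\bw_h\times\bn]\!]_F$. The first piece is $\le\sqrt{\nu}\,C_F h\,\|\bcurl\bomega\|_{0,\Om}\|\bw_h\|_h$ by \eqref{eq:aprox-RT-2}; for the facet sum, $\int_F[\![\bw_h]\!]_F=\bzero$ (and $\int_F\bw_h=\bzero$ for $F\subset\Gamma$) lets one subtract the facet average $\overline{\bomega}_F$, and then the local approximation $\|\bomega-\overline{\bomega}_F\|_{0,F}\lesssim h_F^{1/2}|\bomega|_{1,\omega_F}$, a Cauchy--Schwarz over facets, the jump part of $\|\bw_h\|_h$ on interior facets, and --- for boundary facets --- an inverse estimate combined with $\pwnorm{\bw_h}\le C_\sharp\|\bw_h\|_h$ from Lemma~\ref{lem:aux-24}, yield the claimed $O(h|\bomega|_{1,\Om})$ bound.

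Finally, inserting the three bounds into the inf-sup inequality of the first step and taking the supremum over normalized $(\bw_h,\bmu_h)$ controls $\|\bchi_{\bu}\|_h+\|\bchi_{\bomega}\|_{0,\Om}$; combining with $\|\bu-\bu_h\|_h+\|\bomega-\bomega_h\|_{0,\Om}\le(\|\bu-\bv_h\|_h+\|\bomega-\btheta_h\|_{0,\Om})+(\|\bchi_{\bu}\|_h+\|\bchi_{\bomega}\|_{0,\Om})$ and absorbing the $\|\bu-\bu_h\|_h$ contribution from the coefficient mismatch (legitimate since $1-(\text{data constant})\ge r>0$ under \eqref{eq:small-data}), then taking the infimum over $(\bv_h,\btheta_h)$, produces \eqref{eq:robust-cea} with the constants $\tfrac1r$, $\tfrac1{r\alpha_h}$ and $C=C(\nu,C_F,C_\sharp)$. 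I expect the main obstacle to be the consistency residual: establishing that \emph{all} pressure contributions cancel on the discrete kernel and, above all, bounding the leftover curl-mismatch facet sum by $Ch|\bomega|_{1,\Om}$ --- the boundary facets (where $\bw_h$ does not vanish, so an inverse inequality together with Lemma~\ref{lem:aux-24} is needed) and the bookkeeping of the $\nu$- and parameter-dependence so as to land exactly on $\tfrac1r+\tfrac1{r\alpha_h}$ being the delicate points.
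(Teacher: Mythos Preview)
Your proposal is correct and follows essentially the same Strang-on-the-kernel strategy as the paper, with only minor organisational differences: you base the stability step on the perturbed form $a_h+c_h^{\bu_h}$ (inf-sup constant $\sim r\alpha_h$), whereas the paper uses $a_h$ alone with constant $\alpha_h$ and pushes the whole nonlinear contribution into the consistency, bounding instead the mismatch $c_h^{\bu_h}(\vec u_h,\cdot)-c_h^{\bu}(\vec u,\cdot)$ (four terms) rather than your simpler $c_h^{\bu}(\vec u,\cdot)-c_h^{\bu_h}(\vec u,\cdot)$ (two terms). Your explicit treatment of the facet contributions $\sqrt{\nu}\sum_F\int_F\bomega\cdot[\![\bw_h\times\bn]\!]_F$ arising from the element-wise integration by parts is in fact more careful than the paper's proof, which writes only the volume remainder $\sqrt{\nu}\int_\Omega\bcurl\bomega\cdot(\bz_h-\RTinterp\bz_h)$; those jump terms are indeed $O(h|\bomega|_{1,\Omega})$ by precisely the argument you sketch (subtract the facet mean of $\bomega$ using $\int_F[\![\bw_h]\!]_F=\bzero$ and combine with the tangential-jump part of $\|\bw_h\|_h$, handling boundary facets via $\pwnorm{\bw_h}\le C_\sharp\|\bw_h\|_h$).
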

\begin{proof}
Let us adopt the notation $\vec{u} := (\bu, \bomega),\ \vec{v}:=(\bv,\btheta),\  \vec{z}:=(\bz,\bzeta) \in \bV^0\times\bW$ and similarly for their discrete counterparts $\vec{u}_h,\vec{v}_h,\vec{z}_h\in \bV_h^0\times \bW_h$, denoting  the corresponding discrete norm as, e.g., $\|\vec{z}_h\|:=\|\bz_h\|_h+\|\bzeta_h\|_{0,\Omega}$. First we decompose 
\[ \vec{u} - \vec{u}_h = \vec{u} - \vec{v}_h + \vec{v}_h - \vec{u}_h = \vec{u} - \vec{v}_h - \vec{y}_h\]
and note that $\vec{y}_h : = \vec{u}_h - \vec{v}_h$ belongs to $\bV_h^0\times \bW_h$. Then, using the inf-sup condition of $a_h(\bullet,\bullet)$, the definition of $\vec{y}_h$, and the definition of the discrete problem \eqref{eq:reduced-discr}, we can write
%, for any $\vec{z}_h$, 
the following   
\begin{align}
 \nonumber \alpha_h \|\vec{z}_h \|\|\vec{y}_h\| & \leq \sup_{0 \neq \vec{z}_h\in \bV_h^0\times \bW_h} a_h(\vec{y}_h,\vec{z}_h) = \sup_{0 \neq \vec{z}_h\in \bV_h^0\times \bW_h} a_h(\vec{u}_h-\vec{v}_h,\vec{z}_h)\\
\nonumber  & = \sup_{0 \neq \vec{z}_h\in \bV_h^0\times \bW_h} \bigl( a_h(\vec{u}-\vec{v}_h,\vec{z}_h) - a_h(\vec{u} ,\vec{z}_h) + a_h(\vec{u}_h,\vec{z}_h)\bigr)\\
  & = \sup_{0 \neq \vec{z}_h\in \bV_h^0\times \bW_h} \bigl(a_h(\vec{u}-\vec{v}_h,\vec{z}_h) + \bigl[ 
  %F(\RTinterp 
  F_h(\vec{z}_h) - c_h^{\bu_h}(\vec{u}_h,\vec{z}_h) - a_h(\vec{u},\vec{z}_h) \bigr]\bigr).\label{aux01}
\end{align}
Consequently, dividing through $\alpha_h \|\vec{z}_h \|$ and using the boundedness of $a_h(\bullet,\bullet)$ 
%in \eqref{aux01} \sbc{is this the right reference? I would use (4.16).}, 
as well as the triangle inequality $\|\vec{u}-\vec{u}_h\| \leq \|\vec{u}-\vec{v}_h\| + \|\vec{y}_h\|$, we obtain the following estimate
\begin{equation}
    \|\vec{u}-\vec{u}_h\| \leq \inf_{\vec{v}_h\in \bV_h^0 \times \bW_h} \biggl(1 + \frac{1}{\alpha_h}\biggr) \|\vec{u}-\vec{v}_h\| + \sup_{
    %\substack{
    \vec{z}_h\in \bV_h^0 \times \bW_h
    %\\ \alpha_h\|\bv_h\|_h=1}
    } \frac{\bigl| 
    %F(\RTinterp \bv_h) 
    F_h(\vec{z}_h) - c_h^{\bu_h}(\vec{u}_h,\vec{z}_h) - a_h(\vec{u},\vec{z}_h) \bigr|}{\alpha_h\|\vec{z}_h\|}
\end{equation}
composed by the best approximation in $\bV_h^0\times \bW_h$ and the consistency error. 

Note that 
even if $\RTinterp  \bz_h$ is not in $\bH^1(\Omega)$, we have 
\begin{equation}\label{aux02} \|\RTinterp \bz_h\|_{\bL^4(\Omega)} \leq \|\RTinterp  \bz_h - \bz_h\|_{\bL^4(\Omega)}  + \|\bz_h\|_{\bL^4(\Omega)} \leq C \|\bz_h\|_{\bL^4(\Omega)} \leq C \|\bnabla_h\bz_h\|_{0,\Omega}\leq  {C_*}\|\bz_h\|_h,\end{equation}
thanks to triangle inequality, inverse estimates, and discrete Sobolev properties \cite[Theorem 4.12]{linke2016}. 

Next, we proceed to add $\pm c_h^{\bu}(\vec{u},\vec{z}_h)$ to 
 the numerator in the consistency error $a_h(\vec{u},\vec{z}_h) + c_h^{\bu_h}(\vec{u}_h,\vec{z}_h) - F_h(\vec{z}_h)$. The $+$ will contribute to complete a full residual $a_h(\vec{u},\vec{z}_h) + c_h^{\bu}(\vec{u},\vec{z}_h) - F_h(\vec{z}_h)$, so we need to investigate first the remainder terms as follows, adding and subtracting appropriate terms.   Applying H\"older's inequality, {property  \eqref{aux02}, and reverse triangle inequality}, gives 
\begin{align}
 \nonumber | c_h^{\bu_h}(\vec{u}_h,\vec{z}_h) - c_h^{\bu}(\vec{u},\vec{z}_h)| & = 
 \bigl|  - \frac{1}{\sqrt{\nu}}\int_\Omega  {\bu_h} \times (\bomega_h-\bomega) \cdot \RTinterp \bz_h + \rF \int_\Omega |\bu_h| {(\bu_h - \bu)} \cdot \RTinterp \bz_h \\
 \nonumber & \qquad + \frac{1}{\sqrt{\nu}}\int_\Omega  {(\bu-\bu_h )}\times \bomega \cdot \RTinterp \bz_h - \rF \int_\Omega [|\bu|-|\bu_h|] {\bu} \cdot \RTinterp \bz_h\bigr|\\
 \nonumber & \leq {\frac{C^2_*}{\sqrt{\nu}}} \|\bu_h\|_h \|\bomega-\bomega_h\|_{0,\Omega} \|\bz_h\|_h +
 {\rF C_*^2 \|\bu_h\|_h \|\bu-\bu_h\|_{0,\Omega}  \|\bz_h\|_h} \\
\nonumber & \qquad + {\frac{C_*}{\sqrt{\nu}} \|\bu-\bu_h\|_{\bL^4(\Omega)} \|\bomega\|_{0,\Omega} \|\bz_h\|_h  + \rF C_* \|\bu-\bu_h\|_{\bL^4(\Omega)} \|\bu\|_{0,\Omega}  \|\bz_h\|_h}\\
\label{aux03} & \lesssim {C_*[M_hC_*+MC_\sharp]\biggl(\rF + \frac{1}{\sqrt{\nu}}\biggr)} ( \|\bu-\bu_h\|_h+\|\bomega-\bomega_h\|_{0,\Omega}) \|\bz_h\|_h 
\end{align}
where for the last estimation we have used that $\vec{u},\vec{u}_h$ are solutions to the continuous and discrete problems featuring a continuous dependence on data that we denote here {by $M = \frac{2}{\gamma}\|\f\|_{0,\Om}$ and $M_h = \frac{2}{\wt\gamma}C_{F_h}\|\f\|_{0,\Om}$, respectively (cf. \eqref{eq:stability} and \eqref{eq:stability-h}, respectively); we have also used \eqref{new-bound}.} %Lemma~\ref{lem:aux}.  

We now look again at the numerator of the consistency error and  rewrite $\f$ in terms of the left-hand side of the momentum balance equation in \eqref{eq:ns-new}, use the fact that 
\[ \int_\Omega \nabla p \cdot \RTinterp \bz_h  =0\]
(see, e.g., \cite{linke2014role}) 
and apply integration by parts on the term $\sqrt{\nu}\int_\Omega \bcurl_h\bz_h\cdot\bomega$, to get 
\begin{align*}
& a_h(\vec{u},\vec{z}_h) + c_h^{\bu_h}(\vec{u}_h,\vec{z}_h) - %F(\RTinterp \bv_h)  
F_h(\vec{z}_h) + [ c_h^{\bu_h}(\vec{u}_h,\vec{z}_h) - c_h^{\bu}(\vec{u},\vec{z}_h)] \\
& \quad = \sqrt{\nu}\int_\Omega(\bcurl\bu-\bomega)\cdot\bzeta_h + \sqrt{\nu}\int_\Omega \bcurl\bomega\cdot(\bz_h - \RTinterp \bz_h) + [ c_h^{\bu_h}(\vec{u}_h,\vec{z}_h) - c_h^{\bu}(\vec{u},\vec{z}_h)]\\
& \quad \leq \sqrt{\nu} \|\bcurl \bomega \|_{0,\Omega} \| \bz_h - \RTinterp \bz_h\|_{0,\Omega}   + {C_*[M_hC_*+MC_\sharp]\biggl(\rF + \frac{1}{\sqrt{\nu}}\biggr)} ( \|\bu-\bu_h\|_h+\|\bomega-\bomega_h\|_{0,\Omega}) \|\bz_h\|_h\\
& \quad \leq \biggl[ ch|\bomega|_{1,\Omega} + {C_*[M_hC_*+MC_\sharp]\biggl(\rF + \frac{1}{\sqrt{\nu}}\biggr)} \| \vec{u}-\vec{u}_h\|\biggr]
 \|\bz_h\|_h.\end{align*}
 Here we have used the estimate in \eqref{aux03} and the fact that if $\vec{u}$ is the exact smooth solution of \eqref{eq:reduced-cont} then $\sqrt{\nu}\bcurl \bu = \bomega$ in $\Omega$, and have also used approximation properties of  $\RTinterp $. This yields 
\[\sup_{
    %\substack{
    \vec{z}_h\in \bV_h^0 \times \bW_h
    %\\ \alpha_h\|\bv_h\|_h=1}
    } \frac{\bigl| 
    %F(\RTinterp \bv_h) 
    F_h(\vec{z}_h) - c_h^{\bu_h}(\vec{u}_h,\vec{z}_h) - a_h(\vec{u},\vec{z}_h) \bigr|}{\alpha_h\|\vec{z}_h\|} 
    \leq Ch |\bomega|_{1,\Omega} + {C_*[M_hC_*+MC_\sharp]\biggl(\rF + \frac{1}{\sqrt{\nu}}\biggr)} \| \vec{u}-\vec{u}_h\|.
    \]
And the proof is complete after combining this estimate, the small data assumption \eqref{eq:small-data}, and \eqref{aux02}. 
\end{proof}

%%%%%%%%%%%%%%%%%%%%%%%%%%%%%%%%%%%%%%%%%%%%%%%%%%%%%%%%%%%%%%%%%%%%%%%%%%%%%%
\section{A posteriori error analysis}\label{sec:aposteriori}
%\subsection{Error estimator and its robustness}
{This section is devoted to a reliable and efficient computable  error control. The derivation of the reliable error bound departs from the assumption} that $(\bu,\bomega,p)\in\bV\times\bL^2(\Om)\times \rL_0^2(\Om)$ solves
\eqref{eq:weak} and  $(\bu_h, \bomega_h,p_h)\in \bV_h\times\bW_h\times \rQ_h$ solves 
\eqref{eq:weak-formulation-h} to define the errors 
$\be_\bu:=\bu-\cJ\bu_h$, $\be_\bomega:=\bomega-\bomega_h $, 
and  $e_p:= p-p_h$
as in \eqref{eq:error-dec} except that $\be_\bu:=\bu-\cJ\bu_h\in \bV$ 
is {\em not} equal to 
$\bu-\bu_h\in\bV+\bV_h$.

Recall from  \eqref{eq:dep-b}  that $\bu$ belongs to $\bK$   and thereafter
define the bilinear form $ \cB^{\bu}\big(\bullet,\bullet\big)$ 
as in \eqref{def:A-hat-u}
(with $\wh\bu$ replaced by ${\bu}$). Recall \eqref{eq:BNB-1} and deduce that there exists
some test function $(\bv,\btheta,q)  \in \bV\times\bL^2(\Om)\times \rL_0^2(\Om)$ of norm
$\|((\bv,\btheta),q)\|\le 1$ at most one and 
\[
\frac \gamma2 \|(\be_{\bu},\be_\bomega,e_p ) \|
= \cB^{\bu}\big(\be_{\bu},\be_\bomega,e_p ; \bv,\btheta,q\big).
\]
The solution  $(\bu,\bomega,p)\in\bV\times\bL^2(\Om)\times \rL_0^2(\Om)$ to
\eqref{eq:weak} also satisfies 
\(
 \cB^{\bu}\big(\bu,\bomega,p ; \bv,\btheta,q\big)=F( \bv,\btheta),
\)
whence
\[
\frac \gamma2 \|(\be_\bu,\be_\bomega,e_p ) \|
=\int_\Omega \f \cdot \bv-\cB^{\bu}\big(\cJ\bu_h,\bomega_h,p_h ; \bv,\btheta,q\big).
\]
The right-hand side in this identity defines a residual in terms of the test functions 
$ \bv,\btheta$, and $q$. The discrete solution  $(\bu_h, \bomega_h,p_h)\in \bV_h\times\bW_h\times \rQ_h$ to 
\eqref{eq:weak-formulation-h} involves the discrete right-hand side from \eqref{def:Fh} and the operator $\RTinterp $ from \eqref{eq:def-RT}. 
With the definition of the Crouzeix--Raviart interpolation 
$\bv_h:=\CRinterp \bv$ and the 
piecewise constant integral means $\bomega_h$ and $p_h$ of
 $\bomega$ and $p$, respectively, we investigate the first identity of
 \eqref{eq:weak-formulation-h}
 for $\btheta_h=\bzero$ and \eqref{def:ah}-\eqref{def:ch}, 
 namely
\begin{align*}
\int_\Omega \f \cdot \RTinterp \bv_h
&= \int_\Omega {\frac{1}{\kappa}}\bu_h\cdot\RTinterp \bv 
+\!\! \sum_{F\in \cE^{\mathrm{int}}_h}
\frac{\vartheta}{h_F}\int_F \bigl(
\nu [\![\bu_h\times\bn]\!]_F\cdot[\![\bv_h\times\bn]\!]_F +
[\![\bu_h\cdot\bn]\!]_F[\![\bv_h\cdot\bn]\!]_F
\bigr)\nonumber\\
& \quad + \int_\Omega \left( \sqrt{\nu}\ \bomega_h \cdot\bcurl\bv
-p_h\div\,\bv\right) + 
\int_\Omega \bigl( \rF  |\bu_h|\, \bu_h 
-\nu^ {-1/2}\bu_h\times\bomega_h
\bigr)\cdot \RTinterp \bv
\end{align*}
with  $\int_K \left( \sqrt{\nu}\ \bomega_h \cdot\bcurl\bv_h
-p_h\div\,\bv_h\right) =\int_K \left( \sqrt{\nu}\ \bomega_h \cdot\bcurl\bv
-p_h\div\,\bv\right) $ for all $K\in \cT_h$ from 
the integral mean property of the gradients 
for the  Crouzeix--Raviart interpolation in the last step.
The combination of the last two identities and the definitions  
 \eqref{def:A-hat-u} and \eqref{eq:def-A} reveal the key identity 
\begin{align*}
\frac \gamma2 \|(\be_{\bu},\be_\bomega,e_p ) \|
&=\int_\Omega \f \cdot (\bv- \RTinterp \bv)
+\int_\Omega{\kappa^{-1}} (\bu_h\cdot\RTinterp \bv -(\cJ\bu_h)\cdot\bv) \\
&\quad +\!\! \sum_{F\in \cE^{\mathrm{int}}_h}
\frac{\vartheta}{h_F}\int_F \bigl(\nu [\![\bu_h\times\bn]\!]_F\cdot[\![\bv_h\times\bn]\!]_F 
+[\![\bu_h\cdot\bn]\!]_F[\![\bv_h\cdot\bn]\!]_F\bigr)\nonumber\\
& \quad + 
\int_\Omega \bigl( \rF  |\bu_h|\,\bu_h -\nu^ {-1/2} \bu_h\times\bomega_h
\bigr)\cdot \RTinterp \bv
+\int_\Omega \bigl(\nu^ {-1/2}\bu\times\bomega_h- \rF  |\bu|\,\cJ\bu_h 
\bigr)\cdot \bv\\ &
\quad - \int_\Omega \left( \nu^ {1/2}\btheta \cdot\bcurl \cJ\bu_h- \bomega_h\cdot \btheta\right) 
+\int_\Omega q\; \div \cJ\bu_h.
\end{align*}
The jump terms with $[\![\bu_h]\!]_F=[\![\bu_h-\cJ\bu_h]\!]_F$ and 
$[\![\bv_h]\!]_F = [\![\bv_h-\bv]\!]_F$ allow for standard trace inequalities and interpolation
local interpolation error estimates and eventually verify
\[
\!\! \sum_{F\in \cE^{\mathrm{int}}_h}
\frac{\vartheta}{h_F}\int_F \bigl(\nu [\![\bu_h\times\bn]\!]_F\cdot[\![\bv_h\times\bn]\!]_F 
+[\![\bu_h\cdot\bn]\!]_F[\![\bv_h\cdot\bn]\!]_F\bigr)
\lesssim (1+\nu) \pwnorm{ \bu_h-\cJ\bu_h}%_{H^1(\cT_h)}
\]
with $ \pwnorm{ \bv-\bv_h}%|_{H^1(\cT_h)}
\lesssim 1$ in the last step.
The remaining terms that involve  a factor $\bv$, $\bv_h=\CRinterp \bv$, or 
$\RTinterp \bv_h=\RTinterp \bv$ combine to 
one residual term plus perturbations. The residual reads 
\[
\int_\Omega 
 \bigl(\f -\kappa^ {-1}\bu_h+ \nu^ {-1/2} \bu_h\times\bomega_h-\rF  |\bu_h|\,\bu_h 
\bigr)\cdot (\bv- \RTinterp \bv)
+\int_\Omega{\kappa^{-1}}(\bu_h-\cJ\bu_h) \cdot\bv
\]
and the remaining perturbations  read
\begin{align*}
&\nu^ {-1/2} \int_\Omega  \left((\bu-\bu_h)\times\bomega_h\right) \cdot\bv 
+ \int_\Omega \rF  \bigl( (|\bu_h|-  |\bu|)\,\bu_h  + |\bu|(\bu_h-\cJ\bu_h)
 \bigr)\cdot \bv \\ &\le 
 \bigl[ \nu^ {-1/2} \|\bomega_h\|_{0,\Omega} +  \rF  \| \bu_h  \|_{0,\Omega} \bigr] \,\| \bu-\bu_h\|_{\bL^4(\Omega)} \|\bv\|_{\bL^4(\Omega)} 
 + \rF \|\bu\|_{\bL^4(\Omega)} \|\bv\|_{\bL^4(\Omega)} \|\bu_h-\cJ\bu_h\|_{0,\Omega} \\
 &\le  
  C_{dS}(4)C_{S}(4)  \bigl[ \nu^ {-1/2} \|\bomega_h\|_{0,\Omega} + \rF  \| \bu_h  \|_{0,\Omega}\bigr] 
 \pwnorm{ \bu-\bu_h} %\\ & \quad
 + C_{dS}(4) C_{S}(4)  \rF |\bu|_{1,\Omega} h_{\max} C_I  
 \pwnorm{\bu_h-\cJ\bu_h}%|_{\bH^1(\cT_h)}
\end{align*}
 with discrete Sobolev (resp.  Sobolev) inequalities with constant $  C_{dS}(4) \lesssim 1$
(resp. $  C_{S}(4)  \lesssim 1$), 
 interpolation error estimates, and  $ | \bv|_{1,\Omega}\le 1$ in the last step.

The discrete equations also reveal 
$a_h(\bu_h,\bomega_h; \bzero, \btheta_h)=0=b_h(\bu_h,\bomega_h;q_h)$ 
for all piecewise constant $ \btheta_h \in \bW_h$ and
for all  piecewise constant $ q_h \in Q_h$ 
with integral mean zero over the domain.
Those identities localise (utilise $\int_\Omega \div_h  \bu_h=0$ from a piecewise 
integration by parts to overcome the global constraint in  $Q_h$)  
and lead to the discrete identities 
\[
\sqrt{\nu}\, \bcurl_h \bu_h=\bomega_h\quad\text{and}\quad  \div_h \bu_h=0
\]
for the piecewise constant functions (from piecewise action of the differential  
operators).  Hence  %\vspace{-6mm}
\begin{align*}
&- \int_\Omega \left( \sqrt{\nu}\ \btheta \cdot\bcurl \cJ\bu_h- \bomega_h\cdot \btheta\right) 
+\int_\Omega q\; \div \cJ\bu_h \\ 
&\qquad =
\int_\Omega  \sqrt{\nu}\ \btheta \cdot\bcurl_h (\bu_h-\cJ\bu_h) 
-\int_\Omega q\; \div_h (\bu_h-\cJ\bu_h)\\ 
&\qquad \le ( \sqrt{\nu}\|  \btheta\| +\sqrt 2  \|q\|) \; \pwnorm{ \bu_h-\cJ\bu_h}%|_{\bH^1(\cT_h)}
\le \sqrt{2+\nu} \; \pwnorm{ \bu_h-\cJ\bu_h} %_{\bH^1(\cT_h)}
\end{align*}
with Cauchy inequalities and $\|((\bv,\btheta),q)\|\le 1$ in the last steps.

The  combination of all the above estimates and the  standard error estimate 
$\| h_\cT^ {-1} (\bv-\RTinterp \bv)\|_{0,\Omega}\lesssim \pwnorm{ \bv_h}%|_{\bH^1(\cT_h)}
\le 1$
reveal that 
\begin{align*}
&\|(\be_\bu,\be_\bomega,e_p ) \| 
-2C_{dS}(4)C_{S}(4)\gamma^ {-1} \left( \nu^ {-1/2}\, \|\bomega_h\|_{0,\Omega}  + \rF  \| \bu_h  \|_{0,\Omega}\right)  
\pwnorm{\bu-\bu_h} 
\\
&\qquad \lesssim 
(1+\nu^ {1/2} + (\rF +{\kappa_{\min}^{-1}})h_{\max} ) \pwnorm{\bu_h-\cJ\bu_h}  + \|h_\cT (\f- \kappa^ {-1}\bu_h  +\nu^ {-1/2} \bu_h\times\bomega_h -  \rF  |\bu_h|\,\bu_h )\|_{0,\Omega}.
\end{align*}
(The notation $\lesssim $ includes generic constants as well as  $ \gamma \approx 1$, while we keep $\gamma$
in the rather explicit negative term of the lower bound.) 
Since  $\be_\bu= \bu-\cJ\bu_h$, the   triangle inequality 
\(
 \pwnorm{\bu-\bu_h}%|_{\bH^1(\cT_h)} 
 \le \|(\be_\bu,\be_\bomega,e_p ) \| +  \pwnorm{\bu_h-\cJ\bu_h},%|_{\bH^1(\cT_h)}
\) provides  
\begin{align}\label{eqccapostlast1}
 &\pwnorm{\bu-\bu_h}%|_{\bH^1(\cT_h)}
 + \| \be_\bomega\|_{0,\Omega} + \|e_p \|_{0,\Omega} \nonumber
-2C_{dS}(4)C_{S}(4)\gamma^ {-1} \left( \nu^ {-1/2}\, \|\bomega_h\|_{0,\Omega}  + \rF  \| \bu_h  \|_{0,\Omega}\right)  
\pwnorm{\bu-\bu_h}%|_{\bH^1(\cT_h)} 
\\
&\quad \lesssim 
(1+\nu^ {1/2} + (\rF +{\kappa_{\min}^{-1}})h_{\max} ) \pwnorm{\bu_h-\cJ\bu_h} 
+ \|h_\cT (\f- \kappa^ {-1}\bu_h  +\nu^ {-1/2} \bu_h\times\bomega_h -  \rF  |\bu_h|\,\bu_h )\|_{0,\Omega}.
\end{align}
The proof concludes with a discussion of the factor 
$C_{dS}(4)C_{S}(4) \left( \nu^ {-1/2}\, \|\bomega_h\|_{0,\Omega}  + \rF  \| \bu_h  \|_{0,\Omega}\right) \le \gamma/4$
which follows from \eqref{eq:stability-h} for small sources $\f$ in $\bL^ 2(\Omega)$. 
The point is that the latter estimate allows us to absorb the negative term on the lower bound
of \eqref{eqccapostlast1} and this leads to the a posteriori error control. 
The discrete Friedrichs
inequality $ \| \bu_h  \|_{0,\Omega}\le  C_{dF}  \pwnorm{ \bu_h }$ for all Crouzeix--Raviart functions with
homogeneous boundary conditions and 
$\pwnorm{\bu_h} \leq C_{\sharp} \| \bu_h\|_h$ by  \eqref{eq:aux-24} provide 
\[
\nu^ {-1/2}\, \|\bomega_h\|_{0,\Omega}  + \rF  \| \bu_h  \|_{0,\Omega}
\le \nu^ {-1/2}\, \|\bomega_h\|_{0,\Omega}  +  C_{dF} C_{\sharp} \rF\| \bu_h  \|_h
\le \sqrt{ \nu^ {-1}+ C_{dF}^2 C_{\sharp}^2 \rF^2} \dfrac{2}{\wt\gamma}C_{F_h}\|\f\|_{0,\Om}
\]
with a Cauchy inequality and \eqref{eq:stability-h} in the last step. Hence  in case that
\begin{equation}
\label{eqccapostlast1}
 \|\f\|_{0,\Om}\le
     \gamma{\wt\gamma} /
     \left( 8 C_{F_h} C_{dS}(4)C_{S}(4)    \sqrt{ \nu^ {-1}+ C_{dF}^2 C_{\sharp}^2 \rF^2} \right),
\end{equation}
the lower bound in \eqref{eqccapostlast1} provides error control over 
$\frac 12 \pwnorm{\bu-\bu_h}%|_{\bH^1(\cT_h)}
+ \| \be_\bomega\|_{0,\Omega} + \|e_p \|_{0,\Omega} $. This concludes the proof of the a~posteriori error
estimate.  The final form of the explicit residual-based a~posteriori error estimate 
employs  the well-established formula 
\[
\pwnorm{\bu_h-\cJ\bu_h}^2%|_{H^1(\cT_h)}^ 2
 \lesssim 
\sum_{F\in \cE} h_F^ {-1} \| [\bu_h]_F\times \bn_F\|_F^ 2
\approx \sum_{F\in \cE} h_F \| [\bD\bu_h]_F\times \bn_F\|_F^ 2.
\]
This gives rise to the explicit residual-based 
a~posteriori error  estimator with the contribution 
\begin{equation}\label{eq:local-estimator} 
\eta^2(K):= |K|^{2/d}
\| \f- \kappa^ {-1}\bu_h  
+\nu^ {-1/2} \bu_h\times\bomega_h -  \rF  |\bu_h|\,\bu_h \|_{0,K}^ 2
+ %\cred{\nu?}
|K|^{1/d} \sum_{F\in\mathcal{F}(K)} \| [\bD \bu_h]_F\times \bn_F \|_{0,F}^2
\end{equation}
for each triangle $K\in\cT_h$ and the global version $\eta(\mathcal{T}_h):=\sqrt{\sum_{K\in\mathcal{T}_h} \eta^ 2(K)}$.

\begin{theorem}[A posteriori error control]\label{thm:apost} 
Provided the source is sufficiently small such that \eqref{eqccapostlast1}
holds, we have reliability 
   \[
 \pwnorm{\bu-\bu_h}+ \| \be_\bomega\|_{0,\Omega} + \|e_p \|_{0,\Omega} \lesssim \eta(\mathcal{T}_h).
   \]
{Efficiency holds} even in local form up to data oscillations: For any $K\in\cT_h$ with neighbourhood 
$\Omega(K)$ covered by  the union of all simplices in $\cT_h$
with zero distance to $K$, {we have} 
\[
\eta(K)\lesssim \| D_\textrm{pw} (\bu-\bu_h)\|_{0,\Omega(K)} +
\|\bu-\bu_h\|_{0,K}+\| \bomega-\bomega_h\|_{0,K}+\|p_h-p\|_{0,K}+\textrm{osc}_k(\f, K)
.
\]
The generic multiplicative constants behind the notation $\lesssim$ 
exclusively depend on the material constants, upper bounds of the {solutions} $\bu$ and 
$\bu_h$ in $\bL^ 2(\Omega)$, and the shape regularity of $\cT_h$.
\end{theorem}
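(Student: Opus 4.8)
The plan is to split the proof into the reliability half and the (local) efficiency half. Reliability is essentially already established in the computation preceding the statement: starting from the identity for $\frac{\gamma}{2}\,\|(\be_\bu,\be_\bomega,e_p)\|$, I would bound the resulting residual functional by the two computable quantities $\|h_{\cT_h}(\f-\kappa^{-1}\bu_h+\nu^{-1/2}\bu_h\times\bomega_h-\rF|\bu_h|\,\bu_h)\|_{0,\Omega}$ and $\pwnorm{\bu_h-\cJ\bu_h}$, using the interpolation error estimate for $\RTinterp$ on $\bv-\RTinterp\bv$, trace inequalities together with $[\![\bu_h]\!]_F=[\![\bu_h-\cJ\bu_h]\!]_F$ and $[\![\bv_h]\!]_F=[\![\bv_h-\bv]\!]_F$ on the penalty terms, and the Sobolev and discrete Sobolev inequalities on the convective and Forchheimer perturbations (exactly as in the displayed chain above). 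The negative term $C_{dS}(4)C_{S}(4)\,(\nu^{-1/2}\|\bomega_h\|_{0,\Omega}+\rF\|\bu_h\|_{0,\Omega})\,\pwnorm{\bu-\bu_h}$ on the lower bound is absorbed into the left-hand side by invoking the small-source condition \eqref{eqccapostlast1}, the a~priori bound \eqref{eq:stability-h}, Lemma~\ref{lem:aux-24}, and a discrete Friedrichs inequality; the explicit form of $\eta(\cT_h)$ then follows from the equivalence $\pwnorm{\bu_h-\cJ\bu_h}^2\lesssim\sum_{F\in\cE}h_F^{-1}\|[\![\bu_h]\!]_F\times\bn_F\|_{0,F}^2\approx\sum_{F\in\cE}h_F\|[\![\bD\bu_h]\!]_F\times\bn_F\|_{0,F}^2$ and a triangle inequality.

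For the local efficiency I would use the Verfürth bubble-function technique, treating the two contributions of $\eta(K)$ separately. For the tangential-jump contribution, fix $K$ and an interior face $F\in\mathcal{F}(K)$; since $\bu$ is regular enough, $[\![\bD\bu\times\bn]\!]_F=\bzero$, hence $[\![\bD\bu_h\times\bn]\!]_F=[\![\bD(\bu_h-\bu)\times\bn]\!]_F$. I would then either extend this element-wise constant jump to the face patch $\omega_F$ by a face-bubble function, integrate by parts over $\omega_F$, and apply the standard inverse/trace inequalities for bubble-weighted polynomials to obtain $h_F\|[\![\bD\bu_h\times\bn]\!]_F\|_{0,F}^2\lesssim\|D_{\mathrm{pw}}(\bu-\bu_h)\|_{0,\omega_F}^2$, or invoke directly a localised companion-operator bound $\pwnorm{\bu_h-\cJ\bu_h}\lesssim\pwnorm{\bu-\bu_h}$ in the spirit of \cite{carstensen2024adaptive} together with the jump equivalence above. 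Summing over $F\in\mathcal{F}(K)$ controls $|K|^{1/d}\sum_{F\in\mathcal{F}(K)}\|[\![\bD\bu_h]\!]_F\times\bn_F\|_{0,F}^2$ by $\|D_{\mathrm{pw}}(\bu-\bu_h)\|_{0,\Omega(K)}^2$.

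For the volume-residual contribution, let $\f_k$ be a best $\bL^2(K)$-approximation of $\f$ in $\mathbb{P}_k(K)^d$, set $\mathrm{osc}_k(\f,K):=|K|^{1/d}\|\f-\f_k\|_{0,K}$, and put $R_K:=\f_k-\kappa^{-1}\bu_h-\nu^{-1/2}\bomega_h\times\bu_h-\rF|\bu_h|\,\bu_h$ on $K$, so that the volume part of \eqref{eq:local-estimator} is bounded by $\mathrm{osc}_k(\f,K)+|K|^{1/d}\|R_K\|_{0,K}$. I would test the continuous problem \eqref{eq:weak} with $\bv:=b_K R_K$ extended by zero (a legitimate element of $\bV$ supported in $K$, with $b_K$ the interior element bubble), rewrite $\f$ via the momentum balance in \eqref{eq:ns-new}, and integrate by parts the $\sqrt{\nu}\bcurl\bomega$ and $\nabla p$ terms (transferring all derivatives onto $b_K R_K$, with no boundary term, and allowing $\bomega,p$ to be replaced by $\bomega-\bomega_h$, $p-p_h$ since $\bomega_h,p_h$ are element-wise constant), which yields
\begin{align*}
\int_K R_K\cdot b_K R_K &= \int_K(\f_k-\f)\cdot b_K R_K + \kappa^{-1}\!\int_K(\bu-\bu_h)\cdot b_K R_K + \sqrt{\nu}\!\int_K(\bomega-\bomega_h)\cdot\bcurl(b_K R_K)\\
&\quad - \int_K(p-p_h)\,\div(b_K R_K) + \nu^{-1/2}\!\int_K(\bomega\times\bu-\bomega_h\times\bu_h)\cdot b_K R_K + \rF\!\int_K\bigl(|\bu|\,\bu-|\bu_h|\,\bu_h\bigr)\cdot b_K R_K .
\end{align*}
Each term is then estimated by Cauchy--Schwarz/Hölder, using $0\le b_K\le1$, the inverse estimates $\|\bcurl(b_K R_K)\|_{0,K}+\|\div(b_K R_K)\|_{0,K}\lesssim h_K^{-1}\|R_K\|_{0,K}$, the splitting $\bomega\times\bu-\bomega_h\times\bu_h=(\bomega-\bomega_h)\times\bu+\bomega_h\times(\bu-\bu_h)$, the pointwise inequality $\bigl|\,|\ba|\,\ba-|\bb|\,\bb\,\bigr|\le 3(|\ba|+|\bb|)|\ba-\bb|$, and the global $\bL^4$-bounds on $\bu,\bu_h,\bomega_h$ following from \eqref{eq:stability}, \eqref{eq:stability-h}, Lemma~\ref{lem:aux-24} and \eqref{new-bound}; after dividing by $\|R_K\|_{0,K}$, multiplying by $|K|^{1/d}\lesssim1$, and adding $\mathrm{osc}_k(\f,K)$ and the jump estimate, one arrives at the asserted local bound, with constants depending only on the material parameters, the global bounds of $\bu$ and $\bu_h$ entering the nonlinear terms, and the shape regularity of $\cT_h$.

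The genuine obstacle, and the reason this is not merely a transcription of \cite{CGN}, is that $R_K$ is \emph{not} a polynomial: the term $\rF|\bu_h|\,\bu_h$ is only piecewise smooth. Hence the two cornerstones of the bubble-function argument --- the lower bound $\|R_K\|_{0,K}^2\lesssim\int_K b_K|R_K|^2$ and the inverse estimates for $b_K R_K$, as well as the handling of $\int_K(|\bu|\,\bu-|\bu_h|\,\bu_h)\cdot b_K R_K$ --- are not covered by the classical polynomial theory. Supplying precisely these estimates for the Forchheimer nonlinearity is the content of the novel inverse inequality established in Appendix~\ref{sec:inverse}, and with it in hand the chain above closes. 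I therefore expect the bulk of the work to lie in that inequality; all remaining steps are a by-now-standard combination of Verfürth's bubble-function technique, the companion-operator estimates, and the Sobolev/discrete-Sobolev embeddings already used in the reliability argument.
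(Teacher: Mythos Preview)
Your proposal is correct and follows essentially the same route as the paper: reliability is read off from the pre-theorem computation exactly as you describe, the jump contribution is handled by the companion-operator/face-bubble argument you cite, and the volume contribution is treated by the Verf\"urth technique with test function $b_K R_K$, the identity you display coinciding with the paper's \eqref{eqccneweffeiciencyproffapril_2} and the novel inverse estimate of Appendix~\ref{sec:inverse} supplying the non-polynomial norm equivalence.

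Two small points where the paper differs. First, the Appendix lemma is stated for $\g\in\mathbb{P}_1(K)^d$ only, so the polynomial approximation of the source must be $\Pi_1\f$ (i.e.\ $k=1$) for the inverse estimate to apply as written; your generic $\f_k$ would need an extended version. Second, for the nonlinear residuals the paper does not use your $\bL^4$ splitting but rather H\"older with exponents $(\infty,1)$: it bounds $\|b_K^{1/2}\bv_K\|_{\bL^\infty(K)}$ by an inverse estimate and then pairs the remaining $\bL^1$ norm with a Cauchy inequality and the \emph{global} $\bL^2$ bounds on $\bomega$, $\bu$, $\bu_h$, which is precisely what makes the hidden constant depend only on $\bL^2$ norms of the solutions as asserted in the theorem; your $\bL^4$ route works too but yields constants depending on $\bL^4$ bounds.
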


\begin{proof}
Reliability  follows from the analysis prior to the statement of Theorem~\ref{thm:apost}. The remaining efficiency 
follows with Verf\"urth's bubble-function methodology. This is well established for 
the side contributions 
\[
|K|^{1/(2d)}  \| [\bD \bu_h]_F\times \bn_F \|_{L^2(\partial K)}
\lesssim  \| D_\textrm{pw} (\bu-\bu_h)\|_{0,\Omega(K)}
\] 
and follows verbatim \cite{carstensen2024adaptive,verfurth1989posteriori}. The volume contribution, however,  challenges with a technical 
(possibly unexpected) trap.
The overall idea is to design a local test  function $\bv_K$ that 
allows an evaluation of the residual functional
and thereby involves the exact solution. 
The data oscillations arise from the approximation of the source $\f $ in the very first step. 
In order to estimate the volume contribution 
\[
\mu_K:= |K|^{1/d}
\| \f- \kappa^ {-1}\bu_h  +\nu^ {-1/2} \bu_h\times\bomega_h 
-  \rF  |\bu_h|\,\bu_h \|_{0,K}
\]
of  $K\in\cT_h$, 
we consider the  volume-bubble function  
 ${b_K}\in W^{1,\infty}_0({K})$ on a simplex ${K\subset \mathbb{R}^d}$. The latter 
 is the product
of all $d+1$ barycentric coordinates of the vertices of $K$ times a factor $(d+1)^{d+1}$ 
for the normalisation $0\le b_K\le 1=\max b_K$ in $K$.
With the abbreviation 
\[
\bv_K:=\Pi_1\f- \kappa^ {-1}\bu_h  +\nu^ {-1/2} \bu_h\times\bomega_h -  \rF  |\bu_h|\,\bu_h,
\]
the  admissible test function reads
$ b_K \bv_K$ and belongs to $\bV$ (for $b_K$ is extended by zero). 
The typical application in 
Verf\"urth's bubble-function methodology considers only polynomial test functions that allow a standard  inverse estimate  
\(
\| \g \|_{0,K}\le C_\mathrm{eq}\| b_K^{1/2}\g \|_{0,K}
\quad\text{for all }\g\in P_1(K)^{d}.
\)
In the application of this paper $\bv_K$ is  polynomial up to this extra term 
$b_K \rF  |\bu_h|\,\bu_h$ and hence requires a  new inverse estimate. 
After surprisingly large efforts, Appendix \ref{sec:inverse} provides  
\[
\| |\f|\f+\g\|_{0,K}\le C_\mathrm{eq}\| b_K^{1/2}( |\f|\f+\g)\|_{0,K}
\quad\text{for all }\f,\g\in P_1(K)^{d}.
\]
The  test function $ b_K \bv_K\in \bV$ therefore satisfies 
the novel  nonstandard inverse estimate 
\[
\|%\Pi_1\f- \kappa^ {-1}\bu_h  +\nu^ {-1/2} \bu_h\times\bomega_h -  \rF  |\bu_h|\,\bu_h 
\bv_K\|_{0,K} \le C_\mathrm{eq}\|b_K^ {1/2} \bv_K\|_{0,K}.
\]
The remaining arguments in the efficiency proof are standard nowadays and (partly) apply verbatim.
This, a triangle inequality, and the definition of the oscillations reveal 
\begin{equation}\label{eqccneweffeiciencyproffapril_1}
\mu_K
%|K|^{2/d}/2 \| \f- \kappa^ {-1}\bu_h  +\nu^ {-1/2} \bu_h\times\bomega_h 
%-  \rF  |\bu_h|\,\bu_h \|_{\bL^ 2(K)}^{2}
\le |K|^{1/d}\| \f-\Pi_1\f \|_{0,K} +  \bv_K\|_{0,K}
\le  \textrm{osc}_k(\f, K)+C_\mathrm{eq}|K|^{1/d}\|b_K^ {1/2} \bv_K\|_{0,K}.
\end{equation}
With $\bv$ replaced by  $ b_K \bv_K\in \bV$ in the first equation in \eqref{eq:weak},
the  last term relates to 
\begin{align}\label{eqccneweffeiciencyproffapril_2}
\|b_K^ {1/2} \bv_K\|_{0,K}^2&=\int_K  b_K \bv_K \cdot (
\Pi_1\f- \kappa^ {-1}\bu_h  +\nu^ {-1/2} \bu_h\times\bomega_h -  \rF  |\bu_h|\,\bu_h)\nonumber
\\
&= \int_K  b_K \bv_K \cdot (\Pi_1-1) \f
+\int_K  \left(\nu^{1/2} \bomega\cdot \bcurl ( b_K \bv_K )  - p\,\div ( b_K \bv_K )\right) 
\\
&\quad +\int_K  b_K \bv_K \cdot \left(
\kappa^ {-1}(\bu-\bu_h)  
-\nu^ {-1/2}( \bu\times\bomega-\bu_h\times\bomega_h) 
+ \rF ( |\bu|\,\bu- |\bu_h|\,\bu_h)\right). \nonumber
\end{align}
The last right-hand side \eqref{eqccneweffeiciencyproffapril_2}
consists of six summands  we enumerate $S_1,\dots, S_6$
in the order displayed. Notice that there is a factor $|K|^{1/d}$ omitted compared to
\eqref{eqccneweffeiciencyproffapril_1} and hence we can afford a factor $|K|^{-1/d}$ in the upper bounds of $S_1,\dots, S_6$. A Cauchy inequality and the definition of the oscillations provide
\[
S_1= \int_K  b_K \bv_K \cdot (\Pi_1-1) \f
\le \|b_K^ {1/2} \bv_K\|_{0,K}|K|^{-1/d}\textrm{osc}_k(\f, K).
\]
Recall that $ b_K \bv_K $ has a support on the simplex $K$ and vanishes on its boundary $\partial K$.
Hence the Gauss and Stokes theorems show $\int_K \div ( b_K \bv_K )=0$ and 
$ \int_K  \bomega\cdot \bcurl ( b_K \bv_K )=0$. Since $p_h$ and $\bomega_h$ are constant on $K$, 
we infer 
\begin{align*}
S_2+S_3&=
\nu^{1/2}\int_K ( \bomega-\bomega_h)\cdot \bcurl ( b_K \bv_K ) 
+ \int_K (p_h-p) \,\div ( b_K \bv_K )\\
&\lesssim (\| \bomega-\bomega_h\|_{0,K}+\|p_h-p\|_{0,K}) \|b_K^ {1/2} \bv_K\|_{0,K}|K|^{-1/d} 
\end{align*}
with an inverse estimate $|b_K \bv_K|_{1,K}\lesssim  \|b_K^ {1/2} \bv_K\|_{0,K}|K|^{-1/d}$
in the last step. A Cauchy inequality controls  the term 
\[
S_4=\int_K  b_K \bv_K \cdot 
\kappa^{-1}(\bu-\bu_h)\le {\kappa^{-1}_{\min}}\|b_K^ {1/2} \bv_K\|_{0,K}\| \bu-\bu_h\|_{0,K} .
\]
The nonlinear term $S_5$ is related to
$
\nu^ {1/2}S_5\le\|b_K^ {1/2} \bv_K\|_{\bL^\infty(K)}
%\bu\times\bomega-\bu_h\times\bomega_h
\|(\bu-\bu_h)\times\bomega+\bu_h\times(\bomega-\bomega_h)\|_{\bL^1(K)}.
$
Since $\bomega$ and $\bu_h$ are bounded in $L^2(\Omega)$ by the a priori error analysis, 
a Cauchy inequality for the $L^1$ integrals may be written as
$\|(\bu-\bu_h)\times\bomega\|_{\bL^1(K)}\lesssim \|\bu-\bu_h\|_{0,K}$ and
$\|\bu_h\times(\bomega-\bomega_h)\|_{\bL^1(K)}\lesssim \| \bomega-\bomega_h\|_{0,K}$.
This and the inverse estimate $\|b_K^ {1/2} \bv_K\|_{\bL^\infty(K)}\lesssim 
|K|^{-1/d}\|b_K^ {1/2} \bv_K\|_{0,K}$ establish
\[
S_5\lesssim \nu^{-1/2} |K|^{-1/d}\|b_K^ {1/2} \bv_K\|_{0,K}
\left( \|\bu-\bu_h\|_{0,K}+\| \bomega-\bomega_h\|_{0,K}
\right).
\]
We can afford the above inverse estimate $\|b_K^ {1/2} \bv_K\|_{\bL^\infty(K)}\lesssim 
|K|^{-1/d}\|b_K^ {1/2} \bv_K\|_{0,K}$ in the
 nonlinear term 
\[
S_6\lesssim|K|^{-1/d}\|b_K^ {1/2} \bv_K\|_{0,K}
 |\rF| \| |\bu|\,\bu- |\bu_h|\,\bu_h\|_{\bL^ 1(K)}.
\]
The elementary estimate $|\, |a|a-|b| b\, |\le(|a|+|b|) |a-b| $ for vectors $a,b\in\mathbb{R}^ d$ 
and a Cauchy and triangle inequality provide 
\[
\| |\bu|\,\bu- |\bu_h|\,\bu_h\|_{\bL^1(K)}\le 
(\| \bu\|_{0,K}+\| \bu_h\|_{0,K} ) \| \bu- \bu_h\|_{0,K}.
\]
Let us hide several constants in the notation $\lesssim$ as before but also  
upper bounds of $\| \bu_h\|_{0,K}$ and $\| \bu\|_{0,K}$ to infer
\[
S_6\lesssim|K|^{-1/d}\|b_K^ {1/2} \bv_K\|_{0,K} |\rF| \| \bu- \bu_h\|_{0,K}.
\]
Let us hide the material constants $|\rF|$, {$\kappa_{\min}^{-1},\kappa_{\max}^{-1}$}, $\nu$,   and  $\nu^ {-1}$
in the notation $\lesssim$ 
for a summary of the above estimates of $S_1$,\dots, $S_6$ in 
\eqref{eqccneweffeiciencyproffapril_2}. After a division of 
the factor $\|b_K^ {1/2} \bv_K\|_{0,K}$ (if positive as else there is nothing left to prove) 
we infer 
\[
|K|^{1/d} \|b_K^ {1/2} \bv_K\|_{0,K}\lesssim
 \|\bu-\bu_h\|_{0,K}+\| \bomega-\bomega_h\|_{0,K}+\|p_h-p\|_{0,K}+\textrm{osc}_k(\f, K).
\]
The combination with \eqref{eqccneweffeiciencyproffapril_1} concludes the proof of the 
local efficiency 
\[
\mu_K \lesssim 
\|\bu-\bu_h\|_{0,K}+\| \bomega-\bomega_h\|_{0,K}+\|p_h-p\|_{0,K}+\textrm{osc}_k(\f, K)
\]
of the volume contribution in lower order terms. The efficiency proof does not require any smallness assumption neither on the mesh-size nor on the closeness of the exact and discrete solution. 
\end{proof}

 %%%%%%%%%%%%%%%%%%%%%%%%%%%%%%%%%%%%%%%%%%%%%%%%%%%%%%%%%%%%%%%%%%%%%%%%%%%%%%%%%%%%%%%%%%%%%%%%%%%%%%%
 \section{Numerical results}\label{sec:examples}
 In this section we report and discuss a number of numerical examples to illustrate the performance of the proposed mixed finite element schemes and a posteriori error estimators.  The realisation of the numerical methods designed in the paper is conducted with a combination of open source software packages in the so-called \texttt{Gridap} ecosystem~\cite{badia20,verdugo22,badia22}. In all cases, we use a Newton method with exact Jacobian, setting a tolerance of $10^{-8}$ on the $\ell^2$ norm of the increment and $10^{-12}$ on the $\ell^\infty$ norm of the residual. The linear systems were solved either with UMFPACK (2D cases) or the multifrontal massively parallel sparse direct solver MUMPS (3D cases).

 The adaptive mesh refinement procedure follows Algorithm~\ref{alg:1}. It comprises an standard SOLVE~$\rightarrow$ ESTIMATE $\rightarrow$ MARK $\rightarrow$ ADAPT loop. This algorithm can be in principle combined with any kind of adaptive mesh approach. Our implementation of Algorithm~\ref{alg:1} particularly leverages hierarchically-adapted (i.e., nested) non-conforming octree-based meshes; see, e.g., \cite{badia20}. Forest-of-octrees meshes can be seen as a two-level
decomposition of $\Omega$, referred to as macro and micro level, respectively. The macro level is a suitable {\em conforming} partition $\mathcal{C}_h$ of $\Omega$ into quadrilateral ($d=2$) or hexahedral cells ($d=3$). This mesh, which may be generated, e.g., using an unstructured mesh generator,  is referred to as the coarse mesh. At the micro level, each of the cells of $\mathcal{C}_h$ becomes the root of an adaptive octree with cells that can be recursively and dynamically refined or coarsened using the so-called $1:2^d$ uniform partition rule. If a cell is marked for refinement, then it is split into $2^d$ children cells by subdividing all parent cell edges. If all children cells of a parent cell are marked for coarsening, then they are collapsed into the parent cell. The union of all leaf cells in this hierarchy forms the decomposition of the domain at the micro level, i.e., $\cT_h$.  While these meshes are made of quadrilaterals or cubes, we split their elements into simplices (2 triangles per mesh quadriteral in 2D, 6 tetrahedra per mesh cube in 3D) to obtain the simplicial meshes for our formulation; see, e.g., Figure~\ref{fig:aposte}.

 \begin{algorithm}[t!]
 \algsetup{indent=2em}
 \caption{Adaptive Mesh Refinement and coarsening algorithm}
  \label{alg:1}
  \begin{algorithmic}[1]
  \STATE \textbf{INPUT:} coarse mesh $\mathcal{C}_h$, $\theta^{\rm r} \in (0, 1)$, $\theta^{\rm c} \in (0, 1)$, $l_{\mathrm{max}}$
  \STATE \textbf{OUTPUT:} solution of \eqref{eq:weak-formulation-h} on an adapted mesh
  \STATE Set $\cT_h$ to be the result of (optionally) applying several levels of uniform refinement to $\mathcal{C}_h$
  \STATE SOLVE the discrete problem \eqref{eq:weak-formulation-h} on $\cT_h$
  \FOR {$l = 1,\ldots l_{\mathrm{max}}$}
    \STATE ESTIMATE: for every cell $K\in \cT_h$, compute the local error indicator $\eta(K)$ from \eqref{eq:local-estimator}    
    \STATE MARK for refinement a $\mathcal{M}^{\rm r} \subset \cT_h$ with largest $\eta(K)$ such that $|\mathcal{M}^{\rm r}| \approx \theta^{\rm r} |\cT_h|$  
    \STATE MARK for coarsening another set $\mathcal{M}^{\rm c} \subset \cT_h$ with smallest $\eta(K)$ such that $|\mathcal{M}^{\rm c}| \approx \theta^{\rm c}|\cT_h|$\; 
    \STATE ADAPT: refine $K \in \mathcal{M}^{\rm r}$ and coarsen $K \in \mathcal{M}^{\rm c}$ to construct a new $\mathcal{T}_{h}$ for the next step
    \STATE SOLVE the discrete problem \eqref{eq:weak-formulation-h} on $\cT_h$
  \ENDFOR
  \end{algorithmic}
 \end{algorithm}
 
The adaptive meshes resulting from this approach are non-conforming. In particular, they have hanging faces at the interface of neighbouring cells at different levels of refinement. Mesh non-conformity requires special adaptations to the Crouzeix--Raviart finite element space used in our formulation.  In particular, following \cite{bangerth17}, we impose a set of linear multi-point constraints for the velocity degrees of freedom located at these non-conforming interfaces such that the average of the averages on the finer children faces equals the average on the coarse parent face (so-called Option C in \cite{bangerth17}). While this approach was mathematically proven in \cite{bangerth17} to lead to optimal convergence rates in the case of the Douglas–Santos–Sheen–Ye finite element, our numerical results confirm that this is also the case for the Crouzeix--Raviart finite element.  We used the \texttt{GridapP4est.jl}~\cite{GridapP4est2025} Julia package in order to handle such kind of meshes (including facet integration on non-conforming interfaces as per required by the stabilization terms in the formulation and the computation of the a posteriori error estimator) and finite element space constraints. This package, built upon the \texttt{p4est} meshing engine~\cite{Burstedde2011}, is endowed with the so-called Morton space-filling curves, and it provides high-performance and low-memory footprint algorithms to handle forest-of-octrees.

\paragraph{Verification of convergence to smooth solutions.} First we exemplify the convergence of the method against smooth manufactured solutions and illustrate the pressure robustness property. For this we proceed similarly as in \cite[Sect. 5.2]{linke2014role} and consider the domain $\Omega = (0,1)^2$ with  two values for the kinematic viscosity $\nu =1$ and $\nu = 10^{-4}$, and with two discretisations (including or not the interpolation of suitable test 
%and trial 
functions in $a_h$, $F_h$ and $c_h^{\bu_h}$ that lead to a variational crime). We also consider a manufactured stream function $\xi = x^2(1-x)^2y^2(1-y^2)$ and a manufactured velocity, vorticity and Bernoulli pressure that solve the coupled system \eqref{eq:ns-new} and that satisfy also the homogeneous boundary condition and zero-mean Bernoulli pressure constraint \eqref{eq:boundary}, as follows
 \[ \bu = \bcurl\,\xi, \quad \bomega = \sqrt{\nu}\,\bcurl\,\bu, \quad p = x^3 + y^3-\frac12.\]
The remaining model parameters and stabilisation constant are taken as $\rF = 1$, $\kappa = 1$, and  $\vartheta = 10$, respectively.

\begin{table}[t!]
\setlength{\tabcolsep}{5pt}
    \centering
 \begin{tabular}{|r|c|c|c|c|c|c|c|c|c|}
    \hline
DoFs & $h$  & $ \|\bu-\bu_h\|_h$%texttt{e}_{\bu}$ 
& rate & $\|\bomega-\bomega_h\|_{0,\Omega}$%\texttt{e}_{\bomega}$ 
& rate & $\|p-p_h\|_{0,\Omega}$
%\texttt{e}_p$ 
& rate & $\texttt{loss}_{\mathrm{div}}$ & $\texttt{loss}_{\mathbf{curl}}$  \\
\hline
    \multicolumn{10}{|c|}{standard $\mathbf{CR}-\mathbb{P}_0-\mathbb{P}_0$ scheme, with kinematic viscosity $\nu=1$}\\
    \hline
         33 & 0.7071 & 6.09e-02 & $\star$  & 5.60e-02 & $\star$  & 1.93e-01 & $\star$ &  3.47e-18 & 1.04e-17\\
    145 & 0.3536 & 3.56e-02 & 0.774 & 3.26e-02 & 0.779 & 1.01e-01 & 0.925 & 6.94e-18 & 2.78e-17\\
    609 & 0.1768 & 1.82e-02 & 0.969 & 1.64e-02 & 0.994 & 5.27e-02 & 0.945 & 2.43e-17 & 2.78e-17\\
   2497 & 0.0884 & 9.05e-03 & 1.007 & 8.04e-03 & 1.028 & 2.72e-02 & 0.952 & 1.38e-16 & 6.94e-17\\
  10113 & 0.0442 & 4.50e-03 & 1.008 & 3.97e-03 & 1.019 & 1.38e-02 & 0.975 & 1.08e-15 & 2.06e-16\\
  40705 & 0.0221 & 2.25e-03 & 1.003 & 1.97e-03 & 1.007 & 6.97e-03 & 0.989 & 1.80e-15 & 4.02e-16\\
 163329 & 0.0110 & 1.12e-03 & 1.001 & 9.86e-04 & 1.002 & 3.50e-03 & 0.995 & 1.49e-13 & 8.33e-16\\
 \hline
    \multicolumn{10}{|c|}{$\mathbf{CR}-\mathbb{P}_0-\mathbb{P}_0$ scheme with variational crime, with kinematic viscosity $\nu=1$}\\
    \hline
     33 & 0.7071 & 5.59e-02 & $\star$  & 5.38e-02 & $\star$  & 1.71e-01 & $\star$ & 1.73e-18 & 4.34e-18\\
    145 & 0.3536 & 3.43e-02 & 0.705 & 3.30e-02 & 0.705 & 9.31e-02 & 0.873 & 6.94e-18 & 2.78e-17\\
    609 & 0.1768 & 1.75e-02 & 0.971 & 1.66e-02 & 0.995 & 4.91e-02 & 0.923 & 2.78e-17 & 4.16e-17\\
   2497 & 0.0884 & 8.66e-03 & 1.015 & 8.08e-03 & 1.036 & 2.57e-02 & 0.932 & 6.77e-17 & 8.33e-17\\
  10113 & 0.0442 & 4.30e-03 & 1.011 & 3.97e-03 & 1.023 & 1.32e-02 & 0.964 & 1.19e-15 & 2.26e-16\\
  40705 & 0.0221 & 2.14e-03 & 1.003 & 1.98e-03 & 1.008 & 6.67e-03 & 0.984 & 4.77e-15 & 4.16e-16\\
 163329 & 0.0110 & 1.07e-03 & 1.001 & 9.86e-04 & 1.002 & 3.35e-03 & 0.993 & 5.55e-15 & 1.05e-15\\
    \hline
    \multicolumn{10}{|c|}{standard $\mathbf{CR}-\mathbb{P}_0-\mathbb{P}_0$ scheme, with kinematic viscosity $\nu=10^{-4}$}\\
    \hline
        33 & 0.7071 & 3.85e-02 & $\star$ & 1.31e-03 &  $\star$ & 1.81e-01 &  $\star$ & 1.39e-17 & 8.67e-19\\
    145 & 0.3536 & 3.78e-02 & 0.029 & 4.25e-03 & -1.699 & 9.30e-02 & 0.958 & 2.78e-17 & 1.73e-18\\
    609 & 0.1768 & 1.33e-02 & 1.503 & 2.43e-03 & 0.806 & 4.72e-02 & 0.977 & 8.33e-17 & 1.73e-18\\
   2497 & 0.0884 & 5.68e-03 & 1.230 & 1.37e-03 & 0.823 & 2.37e-02 & 0.994 & 8.33e-17 & 1.30e-18\\
  10113 & 0.0442 & 2.56e-03 & 1.151 & 6.28e-04 & 1.129 & 1.19e-02 & 0.999 & 1.18e-16 & 1.30e-18\\
  40705 & 0.0221 & 1.21e-03 & 1.083 & 2.00e-04 & 1.653 & 5.94e-03 & 1.000 & 4.02e-16 & 3.79e-18\\
 163329 & 0.0110 & 5.89e-04 & 1.034 & 5.50e-05 & 1.860 & 2.97e-03 & 1.000 & 9.99e-16 & 8.24e-18\\
   \hline
    \multicolumn{10}{|c|}{$\mathbf{CR}-\mathbb{P}_0-\mathbb{P}_0$ scheme with variational crime, with kinematic viscosity $\nu=10^{-4}$}\\
    \hline
     33 & 0.7071 & 5.79e-03 &  $\star$ & 5.89e-04 &  $\star$ & 1.70e-01 &  $\star$ & 1.73e-18 & 2.71e-20\\
    145 & 0.3536 & 4.44e-03 & 0.384 & 4.31e-04 & 0.449 & 9.26e-02 & 0.880 & 6.94e-18 & 1.08e-19\\
    609 & 0.1768 & 1.20e-03 & 1.883 & 1.86e-04 & 1.216 & 4.72e-02 & 0.972 & 2.26e-17 & 4.34e-19\\
   2497 & 0.0884 & 3.48e-04 & 1.789 & 8.82e-05 & 1.073 & 2.37e-02 & 0.993 & 6.42e-17 & 8.67e-19\\
  10113 & 0.0442 & 1.03e-04 & 1.758 & 4.31e-05 & 1.033 & 1.19e-02 & 0.998 & 1.11e-16 & 1.36e-18\\
  40705 & 0.0221 & 3.39e-05 & 1.602 & 2.12e-05 & 1.023 & 5.94e-03 & 1.000 & 2.22e-16 & 2.87e-18\\
 163329 & 0.0110 & 1.35e-05 & 1.329 & 1.05e-05 & 1.016 & 2.97e-03 & 1.000 & 4.88e-15 & 9.81e-18\\
    \hline
\end{tabular}
    \caption{Error decay with respect to smooth manufactured solutions in 2D for two methods, and a large and a small viscosity. This illustrates the (Bernoulli) pressure robustness property, evidenced more clearly in the third and fourth blocks of the table for \eqref{eq:weak-formulation-h}.}
    \label{table:cv1}
\end{table}

We generate successively refined simplicial grids and compute errors between the approximate and exact solutions on each refinement level. For uniform mesh refinement, the experimental convergence order is computed as
	\[\texttt{rate} = \frac{\log(e_i(\bullet))- \log(e_{i+1}(\bullet))}{\log(h_i)- \log(h_{i+1})}\]
with $e_i(\bullet)$ denoting the error associated with the quantity $\bullet$ in its natural norm and  $h_i$  the mesh~size corresponding to a refinement level $i$. 
 The error history (errors and experimental convergence rates)  shown in Table \ref{table:cv1}  confirms the optimal convergence of the non-conforming scheme with and without the variational crime approach, for all variables in their respective norms. In particular we confirm that for the larger viscosity the methods deliver comparable results. On the other hand, for the smaller viscosity, with the modified scheme the velocity error (measured in the broken norm) is almost two orders of magnitude smaller than that obtained with the standard scheme (the latter violates the  invariance condition discussed in \cite{linke2017optimal} since the applied gradient force on the right-hand side induces an incompatible velocity field). In the table we also report on the local kernel characterising properties $\div_h\bu_h = 0$ and $\bcurl_h\bu_h - \bomega_h =\boldsymbol{0}$, shown by projecting onto the pressure and vorticity spaces these residuals, and taking the $\ell^\infty$ norm of the corresponding vector of degrees of freedom. We obtain a machine precision accuracy for all  cases. The approximate solutions computed with the modified scheme using the second last mesh refinement and the smaller viscosity are shown in Figure~\ref{fig:ex1}, indicating well-resolved patterns. 
 
We note that for this relatively small problem we simply use a direct method for each tangent system in the linearisation process, and confirm that, in order to  reach the stopping criterion on each refinement level, at most four and two Newton--Raphson iterations were needed for the small and large viscosity cases, respectively.

\begin{figure}[t!]
    \centering
    \includegraphics[width=0.325\linewidth]{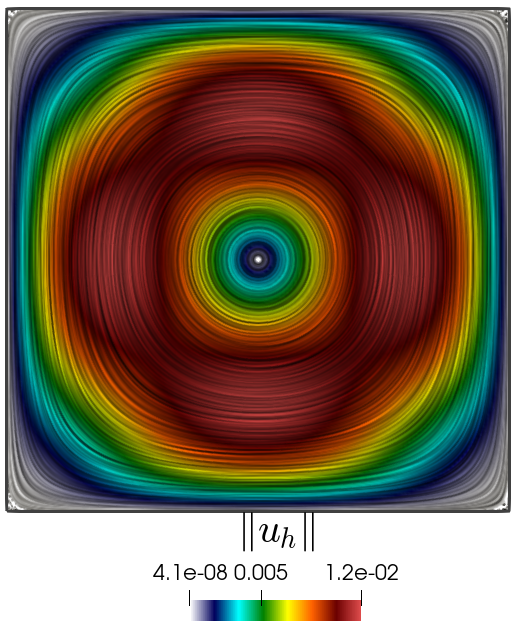}   \includegraphics[width=0.325\linewidth]{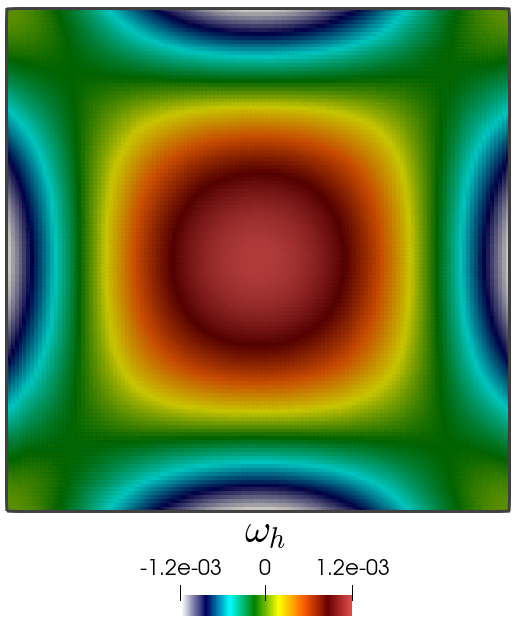}   \includegraphics[width=0.325\linewidth]{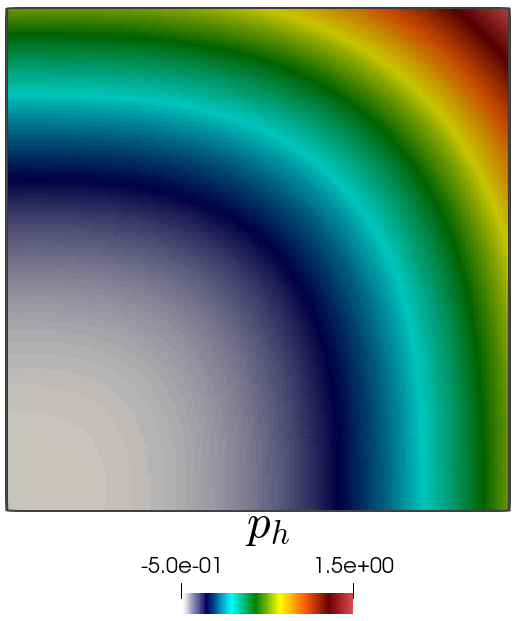}
    \caption{Approximate velocity (line integral contours and magnitude), vorticity, and Bernoulli pressure profiles computed with the modified $\mathbf{CR}-\mathbb{P}_0-\mathbb{P}_0$ scheme, with kinematic viscosity $\nu=10^{-4}$.}
    \label{fig:ex1}
\end{figure}

In order to illustrate the need for the {stabilisation parameter $\vartheta$} as discussed in Section~\ref{sec:discrete-problem}, we run again -- only for four coarse mesh refinement levels -- the test reported in the fourth block of Table~\ref{table:cv1} ($\mathbf{CR}-\mathbb{P}_0-\mathbb{P}_0$ modified scheme, with kinematic viscosity $\nu=10^{-4}$) and compare that block with the results in Table~\ref{table:cv2}, that use $\vartheta\in \{0,0.01,1\}$. {These results determine the value of the parameter that we consider in the subsequent tests. We note that} without stabilisation the method does not converge in the velocity nor vorticity, whereas for not big enough stabilisation one observes a slightly suboptimal convergence in the velocity field (as well as a larger Newton iteration count). {This seems to be qualitatively consistent with the general behaviour of stabilised Crouzeix--Raviart elements for e.g. elasticity \cite{hansbo2003discontinuous}}. Again as in Table~\ref{table:cv1}, the energy error in the modified method is not affected by the relatively larger Bernoulli pressure error.

\begin{table}[t!]
\setlength{\tabcolsep}{5pt}
    \centering
 \begin{tabular}{|r|c|c|c|c|c|c|c|c|c|}
    \hline
DoFs & $h$  & $ \|\bu-\bu_h\|_h$%texttt{e}_{\bu}$ 
& rate & $\|\bomega-\bomega_h\|_{0,\Omega}$%\texttt{e}_{\bomega}$ 
& rate & $\|p-p_h\|_{0,\Omega}$
%\texttt{e}_p$ 
& rate & $\texttt{loss}_{\mathrm{div}}$ & $\texttt{loss}_{\mathbf{curl}}$  \\
\hline
    \multicolumn{10}{|c|}{$\vartheta=0$}\\
    \hline
   33 & 0.7071 & 1.94e-02 & $\star$ & 5.67e-04 & $\star$ & 1.70e-01 & $\star$ & 6.94e-18 & 4.82e-20\\
    145 & 0.3536 & 5.81e-02 & -1.580 & 5.72e-04 & -0.012 & 9.26e-02 & 0.880 & 1.39e-17 & 2.22e-19\\
    609 & 0.1768 & 1.40e-01 & -1.273 & 5.72e-04 & 0.000 & 4.72e-02 & 0.972 & 5.55e-17 & 5.52e-19\\
   2497 & 0.0884 & 2.96e-01 & -1.077 & 5.72e-04 & 0.000 & 2.37e-02 & 0.993 & 2.22e-16 & 1.29e-18\\
  10113 & 0.0442 & 6.01e-01 & -1.020 & 5.71e-04 & 0.001 & 1.19e-02 & 0.998 & 4.44e-16 & 3.38e-18\\
\hline
    \multicolumn{10}{|c|}{$\vartheta=0.01$}\\
    \hline
           33 & 0.7071 & 1.76e-02 & $\star$ & 5.84e-04 & $\star$ & 1.70e-01 & $\star$ & 2.17e-19 & 2.37e-20\\
    145 & 0.3536 & 3.68e-02 & -1.067 & 5.61e-04 & 0.057 & 9.26e-02 & 0.880 & 2.08e-17 & 1.08e-19\\
    609 & 0.1768 & 3.56e-02 & 0.048 & 3.88e-04 & 0.534 & 4.72e-02 & 0.972 & 2.78e-17 & 3.79e-19\\
   2497 & 0.0884 & 2.21e-02 & 0.690 & 2.32e-04 & 0.742 & 2.37e-02 & 0.993 & 5.55e-17 & 5.42e-19\\
  10113 & 0.0442 & 1.18e-02 & 0.904 & 1.24e-04 & 0.902 & 1.19e-02 & 0.998 & 1.11e-16 & 1.63e-18\\
\hline
    \multicolumn{10}{|c|}{$\vartheta=1$}\\
    \hline
      33 & 0.7071 & 5.93e-03 & $\star$ & 6.46e-04 & $\star$ & 1.70e-01 & $\star$ & 6.94e-18 & 1.08e-19\\
    145 & 0.3536 & 3.97e-03 & 0.579 & 3.76e-04 & 0.780 & 9.26e-02 & 0.880 & 8.67e-18 & 2.17e-19\\
    609 & 0.1768 & 9.68e-04 & 2.037 & 1.70e-04 & 1.148 & 4.72e-02 & 0.972 & 2.78e-17 & 4.34e-19\\
   2497 & 0.0884 & 3.29e-04 & 1.555 & 8.42e-05 & 1.010 & 2.37e-02 & 0.993 & 5.55e-17 & 5.42e-19\\
  10113 & 0.0442 & 1.41e-04 & 1.222 & 4.20e-05 & 1.004 & 1.19e-02 & 0.998 & 1.73e-16 & 1.68e-18\\
    \hline
\end{tabular}
    \caption{Error decay with respect to mesh refinement (only coarser levels are shown) using smooth manufactured solutions in 2D for the modified $\mathbf{CR}-\mathbb{P}_0-\mathbb{P}_0$ scheme, with small kinematic viscosity $\nu=10^{-4}$ and three values of the penalisation parameter (compare also with the fourth block in Table~\ref{table:cv1}).}
    \label{table:cv2}
\end{table}

We also showcase the convergence properties in 3D by considering the following smooth manufactured solutions in $\Omega = (0,1)^3$
\[ \bu = \begin{pmatrix}
    \sin(\pi x)\cos(\pi y)\cos(\pi z)\\ -2\cos(\pi x)\sin(\pi y)\cos(\pi z)\\ \cos(\pi x)\cos(\pi y)\sin(\pi z)
\end{pmatrix}, \quad \bomega = \sqrt{\nu}\,\bcurl\,\bu, \quad p = \sin(\pi x)\sin(\pi y)\sin(\pi z)\]
and take model and stabilisation parameters as follows $\nu=0.01$, $\kappa = 100$, $\rF = 10$, $\vartheta = 1$. The error history is presented in Table~\ref{table:cv3}, where the results are consistent with the 2D case: they confirm the optimal order of convergence in the three unknowns, and give evidence of the expected conservation properties. For these runs the Newton--Raphson algorithm has taken up to six iterations to converge on each mesh refinement. The components of the approximate solution are portrayed in Figure~\ref{fig:ex13d}. 

\begin{table}[t!]
\setlength{\tabcolsep}{5pt}
    \centering
 \begin{tabular}{|r|c|c|c|c|c|c|c|c|c|}
    \hline
DoFs & $h$  & $ \|\bu-\bu_h\|_h$%texttt{e}_{\bu}$ 
& rate & $\|\bomega-\bomega_h\|_{0,\Omega}$%\texttt{e}_{\bomega}$ 
& rate & $\|p-p_h\|_{0,\Omega}$
%\texttt{e}_p$ 
& rate & $\texttt{loss}_{\mathrm{div}}$ & $\texttt{loss}_{\mathbf{curl}}$  \\
\hline 
  43 & 0.6124 & 1.94e+00 & $\star$ & 3.42e-01 & $\star$ & 3.15e-01 & $\star$ & 4.44e-16 & 5.55e-17 \\
    409 & 0.3062 & 1.33e+00 & 0.417 & 2.59e-01 & 0.402 & 2.24e-01 & 0.495 & 1.78e-15 & 2.22e-16 \\
   3553 & 0.1531 & 9.83e-01 & 0.432 & 1.64e-01 & 0.663 & 1.13e-01 & 0.987 & 4.44e-15 & 5.83e-16 \\
  29569 & 0.0765 & 5.16e-01 & 0.931 & 8.68e-02 & 0.915 & 5.21e-02 & 1.114 & 1.34e-14 & 1.78e-15 \\
 241153 & 0.0383 & 2.57e-01 & 1.004 & 4.39e-02 & 0.982 & 2.52e-02 & 1.046 & 3.60e-14 & 4.22e-15 \\
1947649 & 0.0191 & 1.28e-01 & 1.002 & 2.21e-02 & 0.994 & 1.25e-02 & 1.015 & 7.90e-14 & 8.88e-15 \\
     \hline
\end{tabular}
    \caption{Error decay with respect to mesh refinement   using smooth manufactured solutions in 3D for the modified (with variational crime) $\mathbf{CR}-\mathbb{P}_0-\mathbb{P}_0$ scheme, with   $\nu=0.01$, $\kappa = 100$, $\rF = 10$, $\vartheta = 1$.}
    \label{table:cv3}
\end{table}

\begin{figure}[t!]
    \centering
    \includegraphics[width=0.325\linewidth]{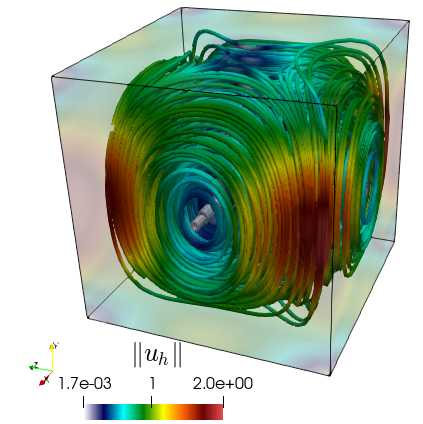}   \includegraphics[width=0.325\linewidth]{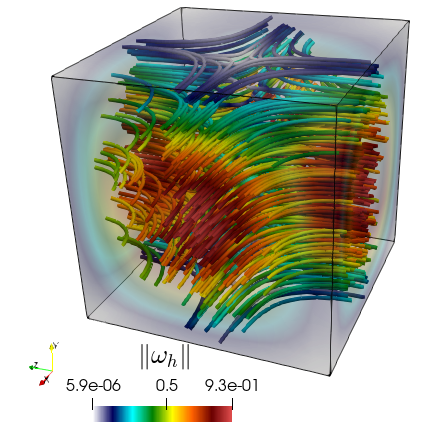}   \includegraphics[width=0.325\linewidth]{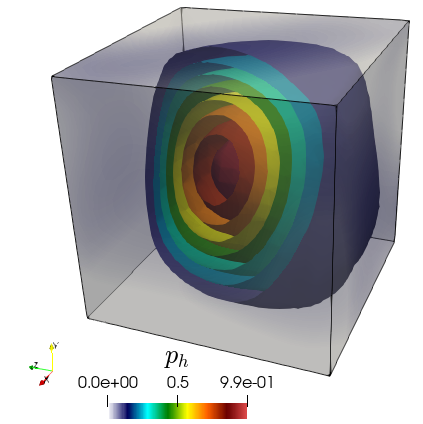}
    \caption{Approximate velocity (streamlines), vorticity (streamlines), and Bernoulli pressure profiles on {a mesh with $h=0.0383$}, computed with the modified $\mathbf{CR}-\mathbb{P}_0-\mathbb{P}_0$ scheme ({representing 241153 DoFs}), with kinematic viscosity $\nu=1$.}
    \label{fig:ex13d}
\end{figure}

\paragraph{Testing the robustness of the posteriori error estimator with non-smooth solutions.}
Next, we assess the convergence of the method when approximating a non-smooth exact solution under uniform mesh refinement, and check how the adaptive mesh refinement guided by the a posteriori error estimator defined in \eqref{eq:local-estimator} is able to restore optimal convergence. For this we take unity parameters $\nu = \rF = \kappa =1$, and use the following manufactured solutions defined on the L-shaped domain $\Omega = (-1,1)^2\setminus([0,1)\times(-1,0])$ (see, e.g., \cite{carstensen2005posteriori,verfurth1989posteriori}):
\begin{gather*}
    \bu = r^\lambda \begin{pmatrix}(1+\lambda)\sin(\theta)\psi(\theta) + \cos(\theta) \psi'(\theta)\\ \sin(\theta)\psi'(\theta) - (1+\lambda)\cos(\theta)\psi(\theta)
            \end{pmatrix}, \quad \bomega = \sqrt{\nu}\bcurl \bu, \quad    
    p = -\nu \frac{r^{\lambda -1 }}{1-\lambda}((1+\lambda)^2\psi'(\theta) + \psi'''(\theta))
\end{gather*}
in polar coordinates centred at the origin $(r,\theta) \in (0,\infty)\times (0,\frac{3\pi}{2})$, where 
\[ \psi(\theta) = \frac{\sin((1+\lambda)\theta)\cos(\lambda w)}{1+\lambda} - \cos ((1+\lambda) \theta) - \frac{\sin((1-\lambda)\theta)\cos(\lambda w)}{1-\lambda} + \cos ((1-\lambda) \theta).\]
Here $\lambda = \frac{856399}{1572864} \approx 0.5444837$ is the smallest positive solution of $\sin(\lambda w) + \lambda \sin (w) = 0$, and we take $w = \frac{3\pi}{2}$.  Second-order derivatives for velocity (and first order derivatives for pressure and vorticity) are not square integrable, and therefore these solutions do not have higher regularity ($\bu \notin \bH^1(\Omega), \omega, p\notin H^1(\Omega)$). Nevertheless, the exact boundary velocity is zero on the reentrant edges (at $\theta = 0$ and $\theta = \frac{3\pi}{2}$) and so the boundary data oscillation can be considered of high order. Note that the exact velocity and pressure above are such that $\sqrt{\nu}\bcurl \bomega + \nabla p = \boldsymbol{0}$ and so $\boldsymbol{f} = \kappa^{-1}\bu + \nu^{-1/2}\bomega \times \bu + \rF|\bu|\bu$. For this test the stabilisation parameter is taken as $\vartheta = 10$.

\begin{figure}[t!]
    \centering
    \includegraphics[width=0.325\linewidth]{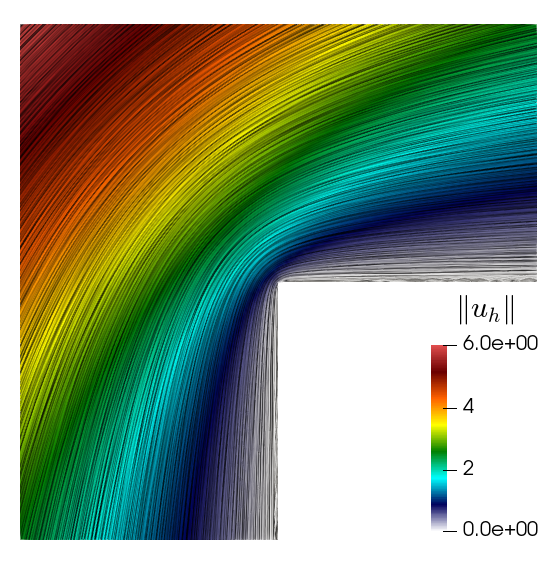}
    \includegraphics[width=0.325\linewidth]{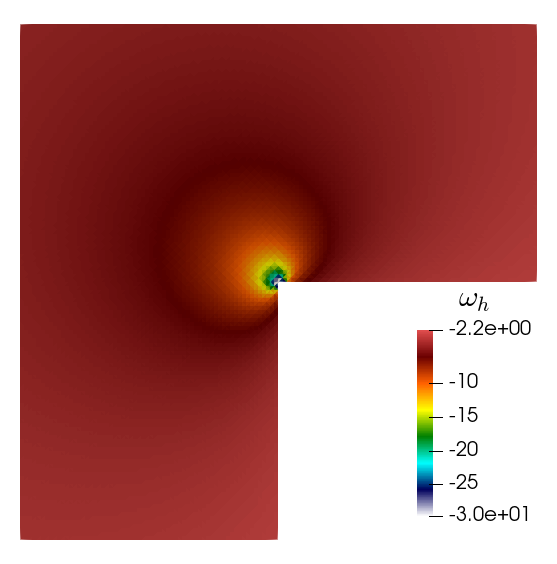}
    \includegraphics[width=0.325\linewidth]{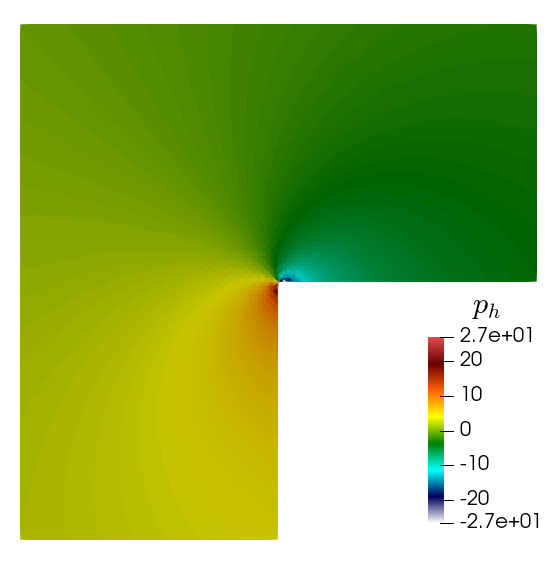}\\
    \includegraphics[width=0.325\linewidth]{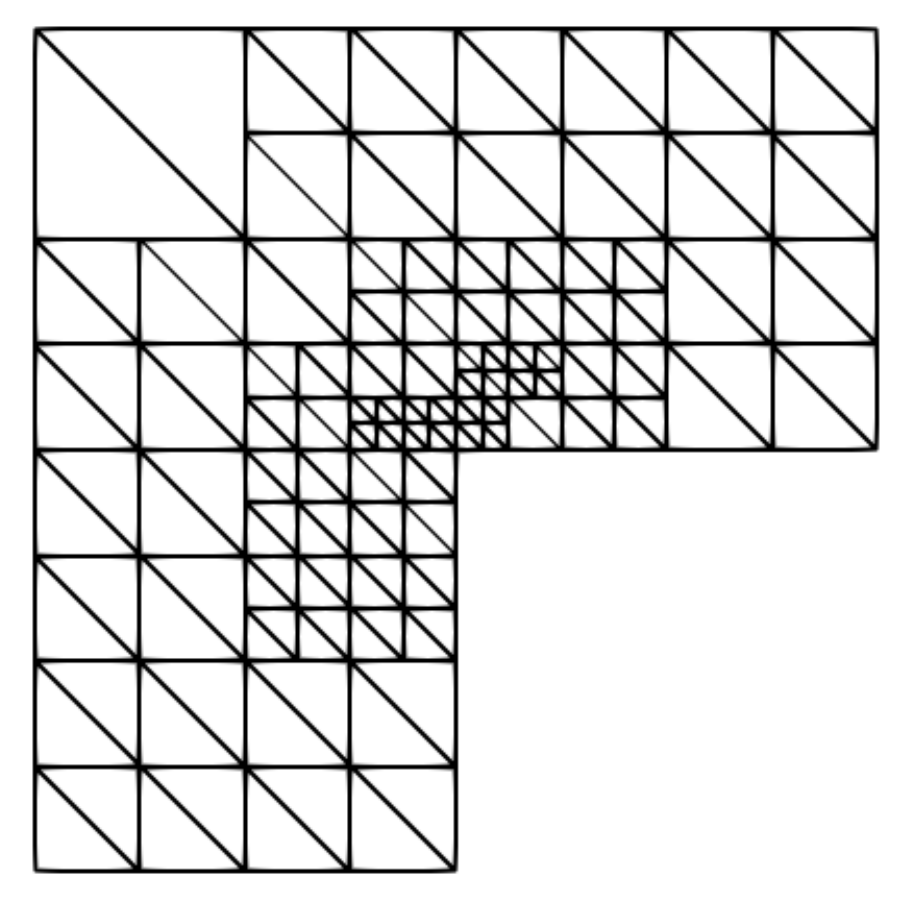}
    \includegraphics[width=0.325\linewidth]{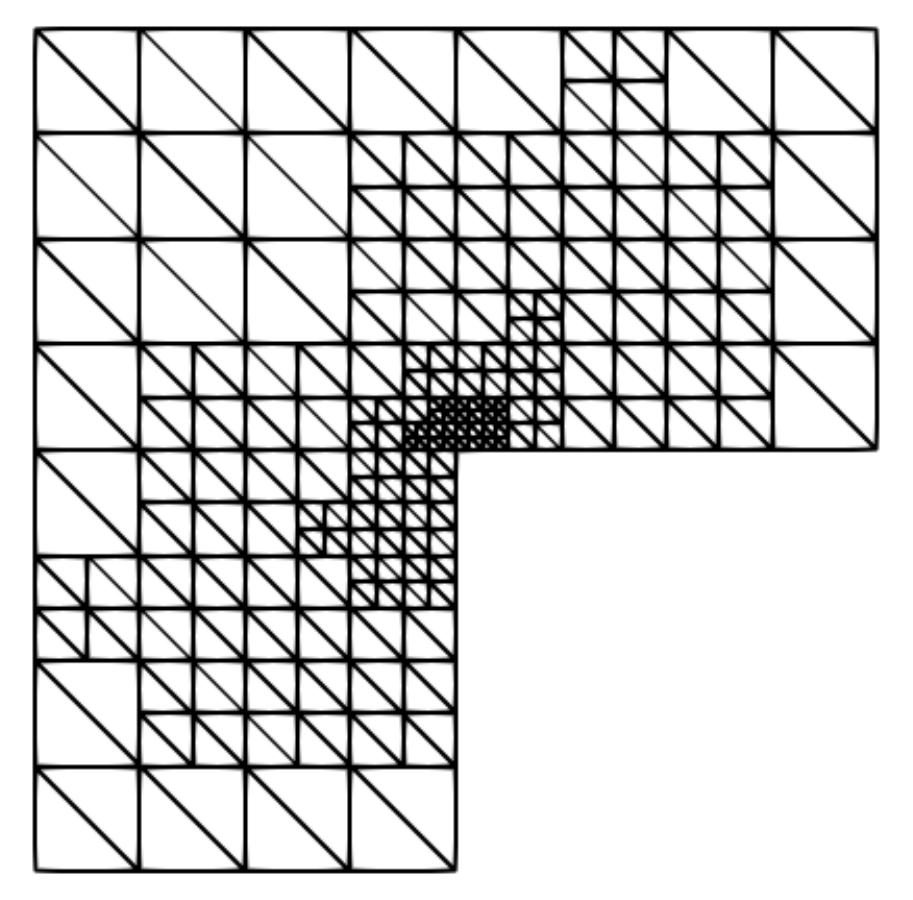}
    \includegraphics[width=0.325\linewidth]{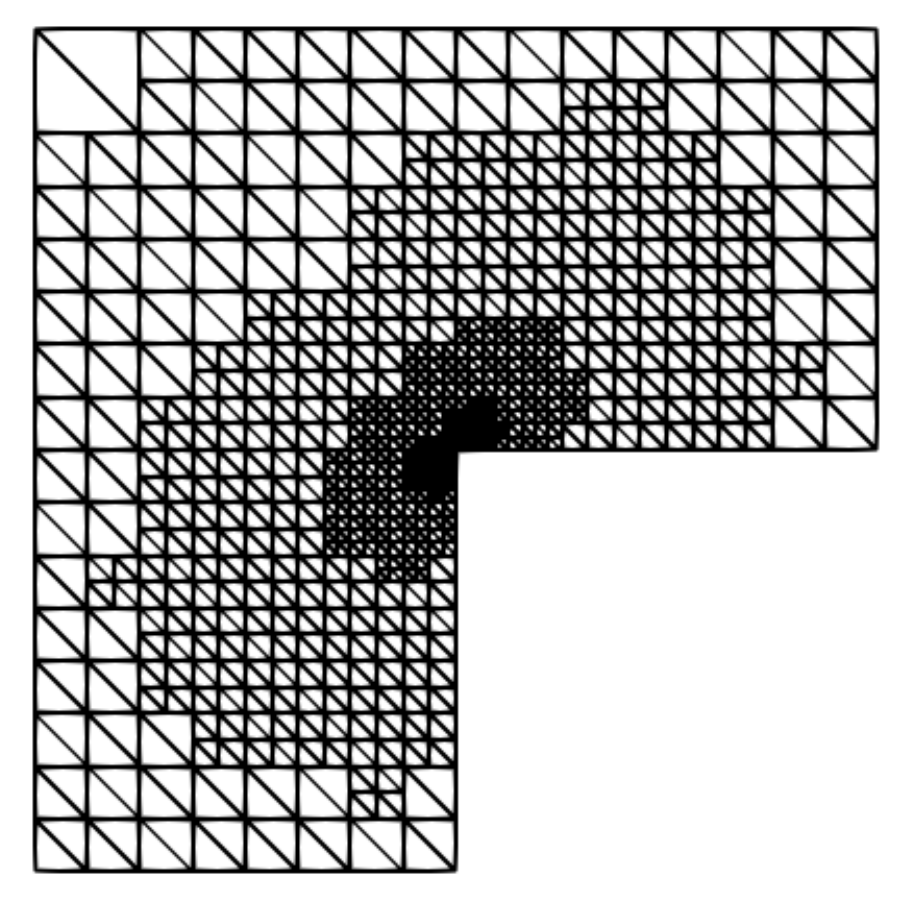}%\\
    \caption{Sample of approximate solutions for the convergence test on an L-shaped domain (top rows), and coarse meshes produced after three, six, and nine steps of the adaptive refinement algorithm guided by the a posteriori error estimator.}
%    , and convergence history for the cases of uniform (dashed lines) and adaptive mesh refinement (solid lines).}
    \label{fig:aposte}
\end{figure}

We show in Figure~\ref{fig:aposte} the approximate solutions as well as a sample of adaptively refined meshes generated by Algorithm~\ref{alg:1} (where it is evident that the solution singularity induces a  concentration of elements near the reentrant corner). In Table~\ref{table:cv-adapt} we display  the difference in error decay between the uniform and adaptive mesh refinement as well as the effectivity index 
\[\texttt{eff}(\eta) = \frac{\|\bu-\bu_h\|_h + \|\bomega-\bomega_h\|_{0,\Omega} + \|p-p_h\|_{0,\Omega}}{\eta(\cT_h)}\] 
in both cases. For adaptive mesh refinement, the experimental convergence order is computed as
	\[\texttt{rate} = \frac{\log(e_i(\bullet)){/} \log(e_{i+1}(\bullet))}{-\frac12[\log(\texttt{DoF}_i)/\log(\texttt{DoF}_{i+1})]}\]
with $e_i(\bullet)$   the error associated with the quantity $\bullet$ in its natural norm and  $\texttt{DoF}_i$ the total number of degrees of freedom corresponding to a refinement level $i$. 
We used a coarse mesh $\mathcal{C}_h$ with three quadtrees to mesh the L-shaped domain, no initial uniform refinements, and $\theta^{\rm r}=0.275$ and $\theta^{\rm c}=0.0$ in Algorithm~\ref{alg:1}. We can observe that the error under uniform mesh refinement goes to zero with a suboptimal rate (the expected $O(h^\lambda)$) while for the adaptive case we recover optimal linear convergence, necessitating much fewer degrees of freedom to achieve the same error. In addition, from the last column of the table we can see that the effectivity index remains bounded between 1.6 and 1.9 for all cases. Irrespective of the refinement strategy and refinement level, the Newton--Raphson scheme has taken no more than four iterations to reach the prescribed tolerance of $10^{-8}$. 

\begin{table}[t!]
\setlength{\tabcolsep}{5pt}
    \centering
 \begin{tabular}{|r|c|c|c|c|c|c|c|c|c|c|}
    \hline
DoFs & $h$  & $ \|\bu-\bu_h\|_h$%texttt{e}_{\bu}$ 
& rate & $\|\bomega-\bomega_h\|_{0,\Omega}$%\texttt{e}_{\bomega}$ 
& rate & $\|p-p_h\|_{0,\Omega}$
%\texttt{e}_p$ 
& rate & $\texttt{loss}_{\mathrm{div}}$ & $\texttt{loss}_{\mathbf{curl}}$ &
$\texttt{eff}(\eta)$\\
\hline
    \multicolumn{11}{|c|}{uniform mesh refinement}\\
    \hline
         23 & 0.7071 & 2.64e+0 & $\star$ & 2.50e+0 & $\star$ & 2.57e+0 & $\star$ & 6.21e-11 & 8.88e-16 & 1.635 \\
    105 & 0.3536 & 1.91e+0 & 0.467 & 1.80e+0 & 0.473 & 2.00e+0 & 0.366 & 2.12e-11 & 1.78e-15 & 1.826 \\
    449 & 0.1768 & 1.36e+0 & 0.489 & 1.27e+0 & 0.496 & 1.69e+0 & 0.241 & 7.28e-12 & 4.44e-15 & 1.810 \\
   1857 & 0.0884 & 9.51e-01 & 0.516 & 8.91e-01 & 0.517 & 1.23e+0 & 0.459 & 2.50e-12 & 8.88e-15 & 1.797 \\
   7553 & 0.0442 & 6.59e-01 & 0.529 & 6.17e-01 & 0.530 & 8.61e-01 & 0.514 & 8.81e-13 & 3.73e-14 & 1.790 \\
  30465 & 0.0221 & 4.54e-01 & 0.537 & 4.25e-01 & 0.537 & 5.96e-01 & 0.530 & 3.41e-13 & 8.26e-14 & 1.786 \\
 122369 & 0.0110 & 3.12e-01 & 0.540 & 2.92e-01 & 0.541 & 4.11e-01 & 0.537 & 2.27e-13 & 1.55e-13 & 1.784 \\
\hline
    \multicolumn{11}{|c|}{adaptive mesh refinement}\\
    \hline
       23 & 0.7071 & 2.64e+0 & $\star$ & 2.50e+0 & $\star$ & 2.57e+0 & $\star$ & 6.21e-11 & 8.88e-16 & 1.635 \\
     79 & 0.3536 & 2.18e+0 & 0.305 & 2.05e+0 & 0.317 & 2.14e+0 & 0.155 & 1.27e-06 & 8.88e-16 & 1.876 \\
    169 & 0.1768 & 1.64e+0 & 0.757 & 1.53e+0 & 0.768 & 1.86e+0 & 0.609 & 2.12e-11 & 3.55e-15 & 1.811 \\
    405 & 0.0884 & 1.25e+0 & 0.625 & 1.16e+0 & 0.646 & 1.62e+0 & 0.212 & 7.42e-09 & 3.55e-15 & 1.782 \\
    921 & 0.0442 & 8.94e-01 & 0.808 & 8.26e-01 & 0.817 & 1.31e+0 & 0.572 & 2.50e-12 & 7.11e-15 & 1.759 \\
   1891 & 0.0221 & 6.54e-01 & 0.871 & 6.00e-01 & 0.886 & 1.03e+0 & 0.726 & 8.63e-13 & 1.24e-14 & 1.726 \\
   3833 & 0.0110 & 4.72e-01 & 0.920 & 4.33e-01 & 0.925 & 7.72e-01 & 0.815 & 3.13e-13 & 1.78e-14 & 1.698 \\
   7771 & 0.0055 & 3.37e-01 & 0.960 & 3.07e-01 & 0.973 & 5.72e-01 & 0.848 & 1.21e-13 & 2.84e-14 & 1.671 \\
  15153 & 0.0028 & 2.39e-01 & 1.020 & 2.18e-01 & 1.032 & 4.22e-01 & 0.911 & 6.39e-14 & 4.26e-14 & 1.649 \\
  29121 & 0.0014 & 1.72e-01 & 1.011 & 1.56e-01 & 1.024 & 3.13e-01 & 0.919 & 5.68e-14 & 7.82e-14 & 1.625 \\
    \hline
\end{tabular}
    \caption{Error decay, divergence and curl loss, and effectivity index tabulated with respect to mesh refinement using non-smooth manufactured solutions in the L-shaped domain for the modified $\mathbf{CR}-\mathbb{P}_0-\mathbb{P}_0$ scheme, and using uniform vs. adaptive mesh refinement.}
    \label{table:cv-adapt}
\end{table}

\section{Concluding remarks}\label{sec:concl}
{We have developed and analysed a novel non-augmented vorticity–velocity–Bernoulli formulation for incompressible flow in highly permeable porous media, governed by the Navier--Stokes--Brinkman--Forchheimer equations. The approach avoids least-squares or stabilisation-based augmentation by formulating the problem as a nested saddle-point system and working in divergence-free velocity spaces. This framework enables both continuous and discrete solvability analyses under small data assumptions, relying on fixed-point arguments and inf-sup conditions in divergence-free subspaces. At the discrete level, we employ a Crouzeix--Raviart finite element method enhanced with tangential and normal jump penalisation to control the full $\mathbf{H}(\bcurl) \cap \mathbf{H}(\div)$ velocity norm, and we ensure pressure-robustness through a modified variational form using a Raviart--Thomas interpolant. We introduced a fully computable residual-based a posteriori error estimator, which is shown to be reliable and efficient---the latter hinging on a new inverse inequality tailored to the Forchheimer nonlinearity. Numerical experiments validate all theoretical results and demonstrate that adaptive mesh refinement, driven by the proposed estimator, restores optimal convergence even in the presence of solution singularities. The practical implementation leverages a lightweight, parallel octree-based infrastructure capable of generating non-conforming adaptive meshes, where appropriate multipoint constraints allow the recovery of optimal approximation properties. The combined contributions in formulation, analysis, estimator design, and implementation  provide a foundation for further development of adaptive solvers for nonlinear porous flow models and illustrate the benefits of structure-preserving discretisations in complex multiphysics regimes. 

Possible directions of future work include the extension of the analysis to more general data regimes, particularly beyond the smallness assumptions required for fixed-point arguments, potentially through monotonicity or compactness-based techniques. It would also be valuable to investigate higher-order nonconforming discretisations that retain divergence-free properties and support pressure-robustness. Furthermore, we want to address the coupling with transport and reaction models within the same porous medium, where interface dynamics or multiphysics constraints may necessitate tailored trace operators and stabilisation. {We also mention that  the analysis of the non steady-state counterpart of this problem can be performed following the abstract Showalter theory. The range condition and monotonicity properties proceed quite similarly as in \cite{anaya23a}, as well as the semi-discrete solvability and the a priori analysis. On the other hand, the fully discrete a priori estimates and its a posteriori error analysis remains an open problem. Finally, it would be worth to explore automatic selection of local stabilisation parameters following the recent work  \cite{bringmann2024local}.}
}

\subsection*{Acknowledgement}
This work has been supported by  Monash Mathematics through a \textsc{Gordon Preston Sabbatical Fellowship}; by the Australian Research Council through the \textsc{Future Fellowship} grant FT220100496 and \textsc{Discovery Project} grant DP22010316; and by the National Research and Development Agency (ANID) of the Ministry of Science, Technology, Knowledge and Innovation of Chile through the postdoctoral program \textsc{Becas Chile} grant 74220026. Computational resources were provided by the Australian Government through NCI under the National Computational Merit Allocation Scheme (NCMAS) and the ANU Merit Allocation Scheme (ANUMAS).

%%%%%%%%%%%%%%%%%%%%%%%%%%%%%%%%%%%%%%%%%%%%%%%%%%%%%%%
%\small
\bibliographystyle{siam}
\bibliography{refs}

%%%%%%%%%%%%%%%%%%%%%%%%%%%%%%%%%%%%
\appendix
\section{A new  inverse estimate}\label{sec:inverse}

Let us first recall that the {scalar}  volume-bubble function  ${b_K}\in W^{1,\infty}_0({K})$ on a simplex ${K\subset \mathbb{R}^d}$ is the product
of all $d+1$ barycentric coordinates of the vertices of $K$ times a factor $(d+1)^{d+1}$ 
for the normalisation $0\le b_K\le 1=\max b_K$ in $K$. Note that typical efficiency estimates rely upon Verf\"urth's bubble-function methodology \cite{verfurth1989posteriori}. For the present case, the Forchheimer nonlinearity requires a non-trivial modification of this approach, presented in the following key inverse estimate.

\begin{lemma}[New inverse estimate]
There exists a universal positive constant $C_\mathrm{eq}$, that exclusively  depends on $d=2,3$, 
such that all affine vector-valued functions ${\f,\g\in \mathbb{P}_1(K)^d}$ satisfy
\[
\| |\f|\f+\g\|_{0,K}\le C_\mathrm{eq}\| b_K^{1/2}( |\f|\f+\g)\|_{0,K}.
\]
\end{lemma}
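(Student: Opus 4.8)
The goal is to establish the equivalence of the two norms on the finite-dimensional space of maps $\f\mapsto |\f|\f + \g$; the obstacle is that $\f\mapsto|\f|\f$ is nonlinear, so a direct linear-algebra scaling argument fails. My plan is to reduce to a reference configuration by affine scaling and then run a compactness argument on a suitably normalised set. First I would fix a reference simplex $\widehat K$ and use that any simplex $K$ is the image of $\widehat K$ under an invertible affine map; since both sides of the claimed inequality scale homogeneously under this change of variables (the factors $|\det|$ cancel after taking the ratio, and $b_K$ pulls back to $b_{\widehat K}$), it suffices to prove the estimate on $\widehat K$ with a constant depending only on $d$. This removes all dependence on the shape and size of $K$.

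On $\widehat K$, I would argue by contradiction: suppose no universal constant works, so there are sequences $\f_n,\g_n\in\mathbb{P}_1(\widehat K)^d$ with $\||\f_n|\f_n+\g_n\|_{0,\widehat K}=1$ while $\|b_{\widehat K}^{1/2}(|\f_n|\f_n+\g_n)\|_{0,\widehat K}\to 0$. The key structural point is that $|\f_n|\f_n+\g_n$ need not be polynomial, but it lies in a set of functions controlled by the finite-dimensional data $(\f_n,\g_n)$. From $\||\f_n|\f_n+\g_n\|_{0}=1$ I would extract boundedness of $\g_n$ in terms of $\f_n$: either $\f_n$ stays bounded, in which case $(\f_n,\g_n)$ is bounded in the finite-dimensional space $\mathbb{P}_1(\widehat K)^d\times\mathbb{P}_1(\widehat K)^d$ and we can pass to a convergent subsequence; or $\|\f_n\|\to\infty$ along a subsequence, in which case I would rescale. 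For the rescaling case, write $t_n:=\|\f_n\|_{0}\to\infty$ (any fixed norm on the finite-dimensional coefficient space), set $\widetilde\f_n:=\f_n/t_n$ so $\|\widetilde\f_n\|_0=1$, and divide the identity $|\f_n|\f_n+\g_n$ by $t_n^2$: then $t_n^{-2}(|\f_n|\f_n+\g_n)=|\widetilde\f_n|\widetilde\f_n+\g_n/t_n^2$, whose $L^2$ norm tends to $0$, while the bubble-weighted norm also tends to $0$. Passing to a limit $\widetilde\f$ with $\|\widetilde\f\|_0=1$ we would get $|\widetilde\f|\widetilde\f + \boldsymbol\ell=\boldsymbol 0$ a.e. for some limit $\boldsymbol\ell$ of $\g_n/t_n^2$, but crucially $|\widetilde\f|\widetilde\f$ is $C^1$ while $\boldsymbol\ell\in\mathbb{P}_1$, and on a set of positive measure this forces $\widetilde\f\equiv\boldsymbol 0$ (the only way a rank-one-homogeneous-degree-two continuous vector field equals an affine one on an open set is the trivial one), contradicting $\|\widetilde\f\|_0=1$. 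Hence $\f_n$ is bounded.

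With $(\f_n,\g_n)$ bounded in a finite-dimensional space, pass to a subsequence converging to $(\f_\infty,\g_\infty)$; then $|\f_n|\f_n+\g_n\to|\f_\infty|\f_\infty+\g_\infty$ uniformly on $\widehat K$ (the map $\f\mapsto|\f|\f$ is continuous), so $\||\f_\infty|\f_\infty+\g_\infty\|_{0}=1$ yet $\|b_{\widehat K}^{1/2}(|\f_\infty|\f_\infty+\g_\infty)\|_{0}=0$. Since $b_{\widehat K}>0$ on the open simplex, the latter forces $|\f_\infty|\f_\infty+\g_\infty=\boldsymbol 0$ a.e., hence everywhere by continuity, contradicting the unit norm. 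This contradiction yields the existence of $C_\mathrm{eq}$. The main obstacle, and the place where care is needed, is the rescaling step: one must verify that the limiting relation $|\widetilde\f|\widetilde\f=\boldsymbol\ell$ with $\boldsymbol\ell$ affine genuinely forces $\widetilde\f\equiv\boldsymbol 0$ — this uses that $|\widetilde\f|\widetilde\f$ is differentiable (so its components cannot reproduce a nonconstant affine function unless $\widetilde\f$ vanishes, and a constant nonzero value is incompatible with $\widetilde\f\in\mathbb{P}_1$ being genuinely affine unless that affine function is itself constant, which then makes $|\widetilde\f|\widetilde\f$ constant and again pins $\widetilde\f$ down) — together with the fact that two members of the finite-dimensional family agreeing on a positive-measure subset of $\widehat K$ agree identically. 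All norms used are interchangeable by finite-dimensionality, so no quantitative constant tracking is required beyond the final compactness extraction.
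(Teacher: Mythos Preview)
Your reduction to the reference simplex is correct and matches the paper. The bounded case of your compactness argument is also fine. The gap is in the unbounded case.

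When $\|\f_n\|_0=t_n\to\infty$, you rescale and pass to a limit $\widetilde\f$ with $\|\widetilde\f\|_0=1$ and $|\widetilde\f|\widetilde\f=-\boldsymbol\ell$ affine. You then assert this forces $\widetilde\f\equiv\0$. That is false: if $\widetilde\f=\ba$ is a nonzero \emph{constant} vector, then $|\widetilde\f|\widetilde\f=|\ba|\ba$ is a nonzero constant, hence affine, and $\boldsymbol\ell=-|\ba|\ba$ is a perfectly good affine limit for $\g_n/t_n^2$. Your parenthetical remark (``pins $\widetilde\f$ down'') does not produce a contradiction here; the rescaled problem simply degenerates and carries no information. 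In fact the paper proves exactly this characterisation: $|\f|\f$ affine on an open set forces only $\bD\f=\0$, never $\f=\0$.

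This is not a cosmetic issue. The regime you fail to exclude is precisely $\sigma_1(\bD\f_n)\ll|\f_n(\mathrm{mid}(K))|$ with both large, and it is the whole difficulty of the lemma. The paper treats this case by a second-order Taylor expansion of $|\f|\f$ in the small parameter $t=\sigma_1(\bD\f)/|\f(\mathrm{mid}(K))|$: the zeroth- and first-order terms are affine (hence can be absorbed into $\g$), while the second-order term $\g_2=(|\bb|^2-(\ba\cdot\bb)^2)\ba+2(\ba\cdot\bb)\bb$ is genuinely quadratic, and a quantitative lower bound $\|(1-P)\g_2\|_{b_K}\gtrsim 1$ (with $P$ the $b_K$-weighted projection onto $\mathbb{P}_1$) is established via a symmetry/moment argument on a ball about the barycentre. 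That lower bound is what your compactness step is missing; without it, a sequence with $\f_n$ large and nearly constant is not ruled out.
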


\noindent{\it Proof of independence of $K$}. At first glance, the most striking aspect of the lemma is the 
independence of $K$, that makes $C_\textrm{eq}$ a constant that depends on the reference simplex $K_\textrm{ref}$
of volume $|K_\textrm{ref}|>0$.
But that aspect follows from an elementary affine transformation $\Psi$ from $K$ to $K_\textrm{ref}$ {(which is smooth since we are assuming shape regularity)}: All integrals of both sides 
of the squared estimate are Lebesgue functions (and no derivative appears) so the transformation shows 
\[
\| |\f|\f+\g\|_{0,K}=\sqrt{ \frac{|K|}{|K_\textrm{ref}|}} \,  \| |\hat{\f}|\hat{\f}+\hat{\g}\|_{0,K_\textrm{ref}}
\quad\text{and}\quad
\| b^ {1/2}_K(|\f|\f+\g)\|_{0,K}=\sqrt{ \frac{|K|}{|K_\textrm{ref}|}} \, 
 \| b^ {1/2}_{K_\textrm{ref}} ( |\hat{\f}|\hat{\f}+\hat{\g})\|_{0,K_\textrm{ref}}
\]
for $\hat{\f}:=\f\circ \Psi $,  $\hat{\g}:=\g\circ \Psi $, $b_{K_\textrm{ref}}=b_K\circ \Psi$,
and $K_\textrm{ref}=\Psi(K)$. Thus, once the assertion holds on the reference simplex $K={K_\textrm{ref}}$, it holds
on any simplex $K$ with the same constant $C_\textrm{eq}$.  \hfill$\square$

\medskip 
\noindent{\it Proof in case $\g=\0$ or $\f\in \mathbb{P}_0(K)^d$}. 
If $\g=\0$ then only the term $|\f|\f $ with 
Euclid length  $|\f|^2$ 
arises and  $|\f|^2$   is a quadratic polynomial 
(for the affine $\f$). Therefore a classical inverse estimate provides $C_\textrm{eq,2}$ with
\[
\| q_2 \|_{0,K}\le C_\textrm{eq,2}\| b_K^ {1/2} q_2\|_{0,K}\quad\text{for all }
q_2\in \mathbb{P}_2(K)^d.
\]
This concludes the proof of the assertion for
 $\f\in \mathbb{P}_1(K)^d$, $\g=\0$, and  $ C_\textrm{eq}=C_\textrm{eq,2}$:
\[
\| |\f|\f\|_{0,K}=\| |\f|^2\|_{0,K}\le C_\textrm{eq,2}\| b_K^ {1/2} |\f|^2\|_{0,K}= C_\textrm{eq,2}\| b_K^ {1/2} |\f| \f\|_{0,K}.%\hspace{3cm}\square
\]
%\sbc{Should it be $C_\textrm{eq,2}$ instead of $C_\textrm{eq,1}$?}
In case $\f\in \mathbb{P}_0(K)^d$, $|\f|\f+\g$ is a polynomial and the above arguments verify
the assertion. {\hfill$\square$}

 \medskip 
  \noindent{\it Notation throughout the remaining parts of the proof}.
  The following notation on the affine function $\f=\ba+\bB\bxi$ for $\bxi=\bx-\textrm{mid}(K)$ 
  applies throughout the proof, {where $\textrm{mid}(K)$ denotes the barycentre of $K$}. So $\ba=\f(\textrm{mid}(K))$ and $\bB=\bD\f\in\mathbb{R}^{d\times d}$ has the maximal
  singular value $\sigma_1(\bB)\ge 0$ (and $\sigma_1(\bB)=0$ iff $\bB=\0$).
  
  The volume-bubble function $b_K$ with maximal value $1$ attained at $\bx=\textrm{mid}(K)$
  is continuous and vanishes outside $K$. The open set $\{ \bx\in \mathbb{R}^d: 1/2<b_K(\bx)\}$ lies compactly in the interior of 
  the simplex $K$ and  contains  its midpoint $\textrm{mid}(K)$. Consequently there exists some positive  $r^*<1$
  such that the open ball $\omega$ around  $\textrm{mid}(K)$ with radius $r^*$ is 
  contained in $\{ \bx\in \mathbb{R}^d: 1/2<b_K(\bx)\}$.
  Let $r^*$ denote the maximal radius with this property 
  and let $\omega$ denote the associated open ball. Notice that $r^*$ is a universal constant in the sense that it 
  exclusively depends on  the reference simplex $K=K_\textrm{ref}$ (since {once} we know the scaling invariance we can reduce the analysis to the reference element). Note that
  $\bx\in\Omega$ is equivalent to  $\bxi=\bx-\textrm{mid}(K)\in  \mathbb{R}^d$ 
  satisfies $|\bxi|<r^*$.
  
  \medskip
  
 \noindent{\it Proof of $\textrm{RHS}=0$ $\Leftrightarrow$  $\bD\f=\0$ and $\g=\bPi_0 |\f|\f$}. (Note that we are proving that $\textrm{RHS}=0$ $\Leftrightarrow$ $\f,\g \in \mathbb{P}_0(K)^d$ and $\f = \g$). 
 Let us denote by $\bPi_k$ the $L^2$ projection onto $\mathbb{P}_k(K)^d$. Suppose the upper bound in the assertion,
 called  $\textrm{RHS}$, vanishes   for the affine functions $\f$ and $\g$; i.e. $|\f|\f$ is affine in $\omega$. 
 If $\f(\bx)=\0$ for all $\bx\in\omega$ then $\bB=\bD \f=\0$.
 Otherwise  $|\f|$ is positive at some $\bx\in\omega$ and so (by continuity) also  in a small neighbourhood of $\bx$.
%Let $\bB=Df$ the constant derivative and let $b:=\bB\zeta$ for some arbitrary unit vector $\zeta$. 
We fix this point $\bx$ throughout this proof, select a 
direction $\bzeta\in\mathbb{R}^d$, and consider 
  \[
  g(t):=|\f(\bx+t\bzeta)|\, \f(\bx+t\bzeta)\cdot \f(\bx)
  \]
 as  a smooth function in the real parameter $t$ near zero. 
   Since $|\f|\f$ is affine in $\omega$, so is $g(t)$ affine in $t$ and its second derivative $g''(0)=0$
   vanishes. Abbreviate $\ba:=\f(\bx)\ne \0$, $\bb:=\bB\bzeta$, $\bB=\bD\f$,
   and $\f:= \f(\bx+t\bzeta)=\ba+t\bb$ and compute
   $g(t)=|\f| (\ba\cdot \f) $, $g'(t)= (\ba\cdot \f)\,  (\bb\cdot \f)/|\f| +|\f| \, (\ba\cdot \bb)$, and eventually 
   \[
   g''(0)=|\ba|\, |\bb|^2+(\ba\cdot \bb )^2/|\ba|.
  \]
  Since we have $\ba\ne \0$,  $g''(0)=0$ implies $\bb=\bB\bzeta=\0$. Recall $\bzeta$ is an arbitrary direction,
 $\bB\bzeta=\0$ follows for all $\bzeta\in \mathbb{R}^d$, whence $\bB=\0$. 
 The remaining details are straightforward  and 
 omitted for brevity.
   {\hfill$\square$}
     
\medskip

\noindent{\it Proof for  $\varepsilon |\f(\textrm{\rm mid}(K))|\le \sigma_1(\bD\f)$}. 
Recall that the extra condition means that for some small and positive $\varepsilon$, the  
parameters $\ba=\f(\textrm{mid}(K))$ and $\bB=\bD\f\in\mathbb{R}^{d\times d}$, 
that determine $\f\in \mathbb{P}_1(K)^d$, satisfy  $\varepsilon \alpha \le \beta$
for length $\alpha:=|\ba|$ and the first singular value $\beta:=\sigma_1(\bB)$.
We may exclude the case  $\beta=0$ (as then $0\le \alpha\le \beta/\varepsilon=0$ means $\f=\0$), 
when the assertion follows with $C_\textrm{eq,2}$ 
from classical inverse estimates mentioned above.
In the remaining  case $\beta>0$, the assertion can be rewritten with the quotient 
\[
Q(\f):=\max_{\g\in \mathbb{P}_1(K)^d} \| |\f|\f+\g\|_{0,K} / \| b_K^{1/2}( |\f|\f+\g)\|_{0,K} \le C_\textrm{eq}.
\]
The characterisation of  $\textrm{RHS}=0$ in the previous step reveals that $Q(\f)\ge 1$ 
(by $b_K\le 1$) is quotient of two positive terms (indeed, since $\beta>0$ then $\bB$ is not the zero matrix and $\f$ is not constant, whence the two terms in the quotient are positive). We rescale the data and observe $Q(\f)=Q(\f/t)$ for
any positive real $t$. The choice 
$t:=\alpha+\beta>0$ reveals that we may and will assume $\alpha+\beta=1$ without loss of generality 
in the sequel and  consider 
\[
\max_{ (\ba,\bB)\in \mathcal{C}(\varepsilon) } Q(\f) =:C_\textrm{eq}(\varepsilon)
\]
with the above notation $\f=\ba+\bB\bxi$  and the compact parameter set
\[
 \mathcal{C}(\varepsilon) =\{ (\ba,\bB)\in \mathbb{R}^{d\times (d+1)}: 
\; \alpha+\beta=1 \text{ and } \varepsilon\alpha\le\beta \text{ hold for } 
 \alpha= |\ba| \text{ and } \beta=\sigma_1(\bB)\}.
\]
Since  $\mathcal{C}(\varepsilon) $ is compact and does not include $\bB=\0$ (because 
$\varepsilon(1-\beta)=\varepsilon\alpha\le\beta$
implies $\beta\ge \varepsilon/(1+\varepsilon)$),
the quotient $Q(\f)=Q(\ba+\bB\bxi)$ depends continuously on $(\ba,\bB)$. Consequently, 
that maximum $C_\textrm{eq}(\varepsilon)<\infty$ of all $Q(\f)$ with $\f$ from 
the parameter set $\mathcal{C}(\varepsilon) $
is attained and in particular finite. (At this stage $C_\textrm{eq}(\varepsilon)<\infty$  
might monotonically depend on $\varepsilon>0$ with
 possibly $\mathcal{C}(\varepsilon) \to\infty $ as $\varepsilon\to 0$.)
 {\hfill$\square$}

\medskip

\noindent{\it Proof for  $ \sigma_1(\bD\f)<\varepsilon_0 |\f(\textrm{\rm mid}(K))|$}. A rescaling
shows that we may suppose a unit vector $ \f(\textrm{\rm mid}(K))=\ba$, $|\ba|=1$, and 
we rename $\bD\f$ as $t>0$ times $\bB$  for a matrix $\bB$ with largest singular value 
$\sigma_1(\bB)=1$. Hence the parameter regime translates into $0<t<\varepsilon_0$
for a small $\varepsilon_0<1/2$ defined below. The first ingredient in this proof is a Taylor expansion in the parameter $t$ of $|\f|\f$. Since $t$ is small and $|\ba|=1$, $|\f|$ is differentiable 
and allows for a power series expansion. From $\f=\ba+t\bb$ and $\bb=\bB\bxi$, we eventually infer  
$|\f|=1+ ( \ba\cdot \bb) t+ {t^2}\left( |\bb|^ 2 - (\ba\cdot \bb)^ 2 \right)/2 +O(t^3)$ and thereafter
\[
|\f|\f= \ba + t\left(\bb+(\ba\cdot \bb)\, \ba\right)+t^2\left( (|\bb|^2-(\ba\cdot \bb)^2 )\, \ba 
+ 2(\ba\cdot \bb)\, \bb\right)/2+\bdelta
\]
with some third-order remainder $\bdelta$. This expansion holds for all $\bx\in K$
and the difference $\bdelta\in L^\infty(K)^{d}$ satisfies $\|\bdelta\|_{L^\infty(K)^d}\le C_0 t^3$
for all $0<t<\varepsilon_0$ and $\varepsilon_0<1/2$
sufficiently small. After abbreviating the terms in the 
displayed expansion of $|\f|\f$ by $\ba\in \mathbb{P}_0(K)^d$, $\g_1:=\bb+(\ba\cdot \bb)\, \ba\in \mathbb{P}_1(K)^d$,
and $\g_2:= (|\bb|^2-(\ba\cdot \bb)^2 )\, \ba + 2(\ba\cdot \bb)\, \bb\in \mathbb{P}_2(K)^d$, we have
\begin{equation}\label{cceqeffstabproof1}
\| |\f|\f-\ba - t \g_1- t^2\g_2/2\|_{L^\infty(K)^d}\le  C_0 t^3.
\end{equation}
The second ingredient is the $b_K$ projection $P: L^2(K)^d\to  \mathbb{P}_1(K)^d$ defined,
%(uniquely) 
for any $\bq\in  L^2(K)^d$, by
\[
P\bq\in  \mathbb{P}_1(K)^d\quad\text{satisfies}\quad
\int_K b_K(\bq-P\bq)\cdot \bphi_1\, \dx=0 \quad\text{for all } \bphi_1\in  \mathbb{P}_1(K)^d.
\]
Notice that $P\in \mathcal{L}( L^2(K)^d) $ is a projection and the orthogonal projection
in the $b_K$ weighted
$L^2(K)^d$ scalar product $(\bullet,\bullet)_{b_K}:=(b_K\bullet,\bullet)_{K}$; $P$ is merely
an oblique projection in $L^2(K)^d$. Let $\| \bullet\|_{b_K}:=(\bullet,\bullet)_{b_K}^ {1/2}$
denote the induced  $b_K$ weighted $L^2(K)$ norm.
The stability 
$\| P\bq\|_{0,K}\le C_\textrm{eq,2} \|\bq\|_{b_K}$
is immediate from the above classical inverse estimate:
\(
C_\textrm{eq,2}^ {-2}  \| P\bq\|_{0,K}^2\le   \| P\bq\|_{b_K}^2=( P\bq,\bq)_{b_K}
\le  \| P\bq\|_{b_K}\| \bq\|_{b_K}
\)
with a Cauchy inequality in the Hilbert space $(L^2(K)^d, (\bullet,\bullet)_{b_K})$
in the last step, whence $ \| P\bq\|_{0,K}\le C_\textrm{eq,2} \| \bq\|_{b_K}$.

In the second step of the proof we establish  $\| (1-P)(|\f|\f) \|_{0,K}\le C_1 t^2$
for $0<t<\varepsilon_0$. A triangle inequality, the elementary estimate
$\|\bullet\|_{0,K}\le |K|^ {1/2}  \|\bullet\|_{L^\infty(K)}$ for the reference simplex $K=K_\textrm{ref}$ of volume $|K|\le 1$, and \eqref{cceqeffstabproof1} 
result in 
\[
\| (1-P)(|\f|\f) \|_{0,K}\le \| (1-P)(\ba - t \g_1- t^ 2\g_2/2)\|_{0,K}+ C_0 t^3.
\]
Since $(1-P)(\ba - t \g_1)=\0$, the above  stability $ \| P\bullet\|_{0,K}\le C_\textrm{eq,2} 
\| \bullet\|_{b_K}$ provides
\[
2t^{-2}  \| (1-P)(\ba - t \g_1- t^ 2\g_2/2)\|_{0,K}= \| (1-P)\g_2\|_{0,K}
 \le  (1+C_\textrm{eq,2})  \| \g_2\|_{0,K}.
 \]
 The point is that  $|\ba|=1=\sigma_1(\bB)$ is bounded and so is $\bb=\bB\bxi$ with $|\bxi|<1$
 (recall $K=K_\textrm{ref}$ is the reference simplex): Thus
 $|\bb|=|\bB\bxi|\le \sigma_1(\bB)=1$ and 
 eventually
 \[
 |\g_2|:= | (|\bb|^2-(\ba\cdot \bb)^2 )\, \ba + 2(\ba\cdot \bb)\, \bb|\le 
 | |\bb|^2-(\ba\cdot \bb)^2 |  + 2 |\ba\cdot \bb|
 \le 3\quad\textrm{ a.e. in } K.
 \]
The combination with the above estimates provides 
$C_1:=8 (1+C_\textrm{eq,2})+ C_0/2$ in 
\[
\| (1-P)(|\f|\f) \|_{0,K}\le  8 t^2(1+C_\textrm{eq,2})  + C_0 t^3 \le C_1 t^2.
\]
%for (by $t<\varepsilon_0<1/2$).
The third step of the proof establishes  $ C_2 t^2\le \| b_K^{1/2}(1-P)(|\f|\f) \|_{0,K}$.
Recall $b_K>1/2$ on the ball $\omega$ around $\textrm{\rm mid}(K)$ of radius $r^*$.
Since $\sigma_1(\bB)=1$ we find some unit vector $\bzeta$ with $|\bB\bzeta|=1$ and 
hence at $\bx:=\textrm{\rm mid}(K)+r^*\bzeta$ we have  $\bb=\bB\bxi$ for 
$\bxi=\bx-\textrm{\rm mid}(K)=r^*\bzeta$ and infer $|\bb|=r^*|\bB\bzeta|=r^*$. 
At other points in $\omega$, $|\bb|\le r^*$, for instance
 $\bb$ vanishes at $\textrm{\rm mid}(K)$. Altogether
 \(
 r^*=\| \bb \|_{L^\infty(\omega)^d}.
 \)
This and the calculation $\ba\cdot \g_2= |\bb|^2+(\ba\cdot \bb)^2$ imply
\[
(r^*)^2=\| \bb \|^2_{L^\infty(\omega)^d}\le \| \ba\cdot \g_2 \|_{L^\infty(\omega)}
\le 2 \min_{g_0\in\mathbb{R}}  \| \ba\cdot \g_2- g_0 \|_{L^\infty(\omega)}
\]
with optimal value $g_0=  \| \ba\cdot \g_2 \|_{L^\infty(\omega)}/2$ in the last step
(recall that $ \ba\cdot \g_2\ge 0$ and it vanishes at $\textrm{\rm mid}(K)$). 
The choice of $\overline{g}$ as the integral mean of $\ba\cdot \g_2$ over $\omega$
is therefore an upper bound,
\[
 \min_{g_0\in\mathbb{R}}  \| \ba\cdot \g_2- g_0 \|_{L^\infty(\omega)}
 \le  \| \ba\cdot \g_2-  \overline{g} \|_{L^\infty(\omega)}
 \le  C_\textrm{inv}  \| \ba\cdot \g_2-  \overline{g} \|_{L^2(\omega)}
\]
from another inverse estimate (from equivalence of norms in $\mathbb{P}_2(\Omega)$)
in the last step. The key insight starts  with the quadratic polynomial
$\ba\cdot \g_2-  \overline{g} $ with integral mean zero
over the symmetric (fixed) domain $\omega$ such that 
$\ba\cdot \g_2-  \overline{g}$ is
symmetric along all straight lines  through  $\textrm{\rm mid}(K)$, while $\bxi$ 
is asymmetric (with respect to the centre  $\textrm{\rm mid}(K)$). Thus the
product   $  \int_\omega (\ba\cdot \g_2- g_0)  \bxi \dx=\0 $ vanishes. This means  
 that $\ba\cdot \g_2- g_0$ has vanishing
moments up to degree one: $\ba\cdot \g_2- g_0$ is $L^2(\omega)$ perpendicular to
all affine functions. Consequently
\[
 \| \ba\cdot \g_2-  \overline{g} \|_{L^2(\omega)}
 =\min_{q_1\in \mathbb{P}_1(\omega)}  \| \ba\cdot \g_2- q_1 \|_{L^2(\omega)}
 \le 2  \min_{q_1\in \mathbb{P}_1(\omega)}  \| \ba\cdot \g_2- q_1 \|_{b_K}
 \le  2 \min_{\bq_1\in \mathbb{P}_1(\omega)^d}  \| \g_2- \bq_1 \|_{b_K}
\]
with $b_K>1/2$ a.e. in $\omega\subset K$ and $|\ba|= 1$ in the last steps.
Notice that the last term $ \min_{\bq_1\in \mathbb{P}_1(\omega)^d}  \| \g_2- \bq_1 \|_{b_K}$
is equal to  $\| (1-P)\g_2 \|_{b_K}$. The 
combination of the aforementioned inequalities reads
\[
\frac{ t^2 (r^*)^2}{8 C_\textrm{inv} }\le 
\| t^2(1-P)\g_2/2 \|_{b_K}
\le \|(1-P) (|\f|\f) \|_{b_K} + C_0\, t^ 3
\]
with \eqref{cceqeffstabproof1} and a triangle inequality in the last step.
Under the condition
\[
0<t\le \varepsilon_0:=\min\{ 1/2,  (r^*)^2/( 16 C_0 C_\textrm{inv})\},
\] 
we %infer $\frac{ t^2 (r^*)^2}{8 C_\textrm{inv} }- C_0\, t^ 3
%\ge t^2  (r^*)^2/(16 C_0 C_\textrm{inv} )$ and 
conclude the proof of the assertion
with $C_2= (r^*)^2/(16 C_0 C_\textrm{inv} )$ in the third step.

Step four concludes the proof for $t\le \varepsilon_0$.  The previous two steps provide 
\[
C_1^{-1}C_2 \| (1-P)(|\f|\f) \|_{0,K}\le  t^2 \, C_2 \le  \|(1-P) (|\f|\f) \|_{b_K} .
\]
Given any $\g\in \mathbb{P}_1(K)^d$ and $\f$ as before, a triangle and the last inequality
lead to
\begin{align*}
 \| |\f|\f +\g \|_{0,K}&\le  \| (1-P)(|\f|\f) \|_{0,K}+ \| \g+ P(|\f|\f) \|_{0,K}\\
& \le C_1C_2 ^{-1} \|(1-P) (|\f|\f) \|_{b_K} +C_\textrm{eq,2} \| \g+ P(|\f|\f) \|_{b_K}
%&\le \sqrt{ C_1^2C_2 ^{-2}+C_\textrm{eq,2}^ 2}  \| |\f|\f+\g \|_{b_K}
\end{align*}
with the aforementioned classical inverse estimate in the last step. A Cauchy inequality
in $\mathbb{R}^d$ and a Pythagoras identity
in the $b_K$ weighted Lebesgue norms based on the orthogonality 
$((1-P)(|\f|\f),\g+ P(|\f|\f))_{b_K}=0$ leads to
\[
 \| |\f|\f +\g \|_{0,K}\le  \sqrt{ C_1^2C_2 ^{-2}+C_\textrm{eq,2}^ 2}  \| |\f|\f+\g \|_{b_K}.
\]
This concludes the proof of the assertion with 
$C_\textrm{eq}^ 2 = C_1^2C_2 ^{-2}+C_\textrm{eq,2}^ 2 $.
{\hfill$\square$}
   
\medskip

\noindent{\it Conclusion of the proof of the lemma}. We depart from the previous definition
 of $\varepsilon_0$ as a universal constant (it exclusively depends on $K=K_\textrm{ref}$,
 whence solely on $d$) and see that the assertion follows with a constant 
 $C_\textrm{eq}=\sqrt{ C_1^2C_2 ^{-2}+C_\textrm{eq,2}^ 2}$ (that exclusively 
 depends on $d$) in case $\f\in \mathbb{P}_1(K)^d$ satisfies 
 $ \sigma_1(\bD\f)<\varepsilon_0 |\f(\textrm{\rm mid}(K))|$.
 The other regime is covered by the choice $\varepsilon=\varepsilon_0$ and leads to
 the assertion with a constant $C_\textrm{eq}(\varepsilon_0)$ in case 
 $\f\in \mathbb{P}_1(K)^d$ satisfies 
 $\varepsilon_0 |\f(\textrm{\rm mid}(K))|\le \sigma_1(\bD\f)$.
 This concludes the proof.
{\hfill$\square$}

\end{document}